\newtheorem{theorem}{Theorem}[section]
\newtheorem{proposition}[theorem]{Proposition}
\newtheorem{lemma}[theorem]{Lemma}
\theoremstyle{definition}
\newtheorem{definition}[theorem]{Definition}
\numberwithin{equation}{section}
\newcommand{\C}{686}
\newcommand{\D}{664}
\begin{document}

\title{Contracting the boundary of a Riemannian 2-disc}

\author[Y.~Liokumovich]{Yevgeny Liokumovich}
\address {Department of Mathematics, University of Toronto, Toronto, Canada}
\email{e.liokumovich@utoronto.ca}

\author[A.~Nabutovsky]{Alexander Nabutovsky}
\address {Department of Mathematics, University of Toronto, Toronto, Canada}
\email{alex@math.toronto.edu}

\author[R.~Rotman]{Regina Rotman}
\address {Department of Mathematics, University of Toronto, Toronto, Canada}
\email{rina@math.toronto.edu}

\begin{abstract}
Let $D$ be a Riemannian 2-disc of area $A$, diameter $d$ and length of the boundary $L$.
We prove that it is possible to contract the boundary of $D$
through curves of length $\leq L + 200d\max\{1,\ln {\sqrt{A}\over d} \}$. This answers a twenty-year old question
of S. Frankel and M. Katz, a version of which was asked earlier by M.Gromov.

We also prove that a Riemannian $2$-sphere $M$ of diameter $d$ and area $A$
can be swept out by loops based at any prescribed point $p\in M$ of length
$\leq 200 d\max\{1,\ln{\sqrt{A}\over d} \}$. This estimate is optimal
up to a constant factor.
In addition, we provide much better (and nearly optimal) estimates for these
problems in the case, when $A<<d^2$. Finally, we describe the applications of
our estimates for study of lengths of various geodesics between a fixed
pair of points on ``thin" Riemannian $2$-spheres.
%This estimate can be applied
%to obtain good curvature-free upper bounds for lengths of shortest
%simple periodic geodesics and of various
%geodesics connecting a fixed pair of points on ``thin" Riemannian $2$-spheres.
\end{abstract}

\date{}

\maketitle

\section{Main results}

Consider a 2-dimensional disc $D$ with a Riemannian metric. M. Gromov asked if there exists a
universal constant $C$, such that the boundary of $D$ could be homotoped to a point
through curves of length less than $C\max\{|\partial D|,  diam(D)\}$, where $|\partial D|$ denotes the length of
the boundary of $D$ and $diam (D)$ denotes its diameter. This question is a Riemannian analog of the well-known (and still
open) problem in
geometric group theory asking about the relationship between the filling length and the filling diameter (see \cite{Gr93}).

S. Frankel and M. Katz answered the question posed
by Gromov negatively in \cite{FK}.
They demonstrated that there is no
upper bound for lengths of curves in an ``optimal" homotopy contracting $\partial D$ in terms of
$\vert \partial D\vert$ and $diam(D)$.
Then they asked if there exists such an upper bound if one is allowed to use the area $Area(D)$ of $D$ in addition
to $\vert\partial D\vert$ and $diam (D)$.
In this paper we will prove
that the answer to this question is positive, and, moreover, provide nearly optimal upper bounds for lengths
of curves in an ``optimal" contracting homotopy in terms of $\vert\partial D\vert, diam(D)$ and $Area(D)$.
Note that S. Gersten and T. Riley ([GerR])
proved a similarly looking result in the context of geometric group theory. Yet in the Riemannian setting their approach seems to yield
an upper bound with the leading terms $const(\vert\partial D\vert+ diam(D)\max\{1,\ln{\sqrt{Area(D)}\over inj(D)}\})$, where $inj(D)$ denotes the injectivity radius of the disc, and so does not
lead to a solution of the problem posed by Frankel and Katz.

Define the {\it homotopy excess},  $exc(D)$, of a Riemannian disc $D$ as the infimum
of all values of 
$x$ such that for every $p\in\partial D$ the boundary of $D$ is contractible
to $p$ via loops of length $\leq |\partial D|+x$ based at $p$.
Let $exc(d,A)$ denote the supremum of $exc(D)$ over all discs $D$ of area $\leq A$ and diameter $\leq d$.
The examples of [FK] imply the existence of a positive constant $const$ such
that $exc(d,A)\geq const\ d\max\{1,\ln{\sqrt{A}\over d}\}$. The first of our main results
implies that this lower bound is optimal up to a constant factor:
\medskip

\noindent
{\bf Main Theorem A.} $$exc(d,A)\leq 200 d\max\{1,\ \ln{\sqrt{A}\over d}\}$$
\medskip

In fact, we are able to prove that
$\lim\sup_{{\sqrt{A}\over d}\longrightarrow
\infty} {exc(d,A)\over d\ln {\sqrt{A}\over d}}\leq {12\over \ln {3\over 2}}<30$
(see the remark after the proof of Theorem 1.2 in section 7).
On the other hand, using a modification of the examples of Frankel and Katz we were able to
prove that:

\medskip
\noindent
\begin{theorem}\label {AA} {\bf (Theorem AA.)}
\par\noindent
1. $$exc(d,A)\geq {\sqrt{3}\over 2\ln 3} d(\ln{\sqrt{A}\over d}- o(1))>0.788 d\ln {\sqrt{A}\over d},$$
as ${\sqrt{A}\over d}\longrightarrow\infty$.
\par\noindent
2. On the other hand, if ${\sqrt{A}\over d}$ is sufficiently small, then $$ exc(d,A)> 2d + 0.0825\sqrt{A}.$$
\end{theorem}

Our proof of Theorem AA relies on the recent work of G. Chambers and the third author [CR].

%In fact, we are able to prove that
%$\lim\inf_{{\sqrt{A}\over d}\longrightarrow
%\infty} {exc(d,A)\over d\ln {\sqrt{A}\over d}}\geq {1\over 8\ln {2}}>0.18$
%and believe that the same examples could be used to get a better lower bound
%${1\over 2\ln 2}$.

Here are some other upper estimates:

\begin{theorem} \label{main}
For any Riemannian 2-disc $D$ and a point
$p \in \partial D$ there exists a homotopy $\gamma_t$ of loops based at $p$ with $\gamma_0 = \partial D$
and $\gamma_1=\{ p \}$,
such that
$$|\gamma_t| \leq 2 |\partial D| + \C \sqrt{Area(D)} + 2 diam(D)$$
for all $t \in [0,1]$.
\end{theorem}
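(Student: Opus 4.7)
My plan is to reduce the construction of the based homotopy to a ``short sweepout'' of $D$ by simple arcs, and then close up each arc into a loop at $p$ along $\partial D$.

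The key ingredient is a continuous one-parameter family $\{\alpha_s\}_{s\in[0,1]}$ of simple arcs in $D$, with endpoints $u(s),v(s)$ on $\partial D$, that sweep out $D$ and satisfy $|\alpha_s|\le C_0\sqrt{Area(D)}$ for some universal constant $C_0$, arranged so that $\alpha_0$ degenerates at $p$ and $\alpha_1$ degenerates at some point of $\partial D$. I would produce such a family by iteratively bisecting $D$ along shortest boundary chords, using the quadratic (planar) isoperimetric inequality to bound each chord in terms of the square root of the area of the larger sub-piece at that step, and then interpolating the bisecting chords continuously in the spirit of a Morse-theoretic sweepout.

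Given such a sweepout, I would define
\[
\gamma_s := \beta^+_s \cdot \alpha_s \cdot \beta^-_s,
\]
where $\beta^+_s$ is a sub-arc of $\partial D$ from $p$ to $u(s)$ and $\beta^-_s$ is a sub-arc from $v(s)$ back to $p$, chosen so that the loops vary continuously in $s$. Since the two boundary sub-arcs together cover $\partial D$ at most twice during the parameter range (the extra factor of $2$ arises when the endpoint pair $(u(s),v(s))$ swaps orientation on $\partial D$ as $s$ varies), one obtains
\[
|\gamma_s|\;\le\;2\,|\partial D|+C_0\,\sqrt{Area(D)}.
\]
The boundary conditions $\gamma_0=\partial D$ and $\gamma_1=\{p\}$ would be enforced by a trivial retraction of the endpoint of $\alpha_1$ back to $p$ along $\partial D$, together with a shortest interior connector of length $\le diam(D)$ used to keep the family continuous at the moments where $(u(s),v(s))$ makes a discrete change; these fix-ups contribute the $2\,diam(D)$ term.

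The main obstacle will be the first step: producing an arc-sweepout whose chord-length bound depends purely on $\sqrt{Area(D)}$ (and not on $diam(D)$ or $|\partial D|$), with precise control of how the chord endpoints move on $\partial D$ and with the prescribed degeneration at the ends of the parameter interval. This is where the bulk of the technical work lies and is the source of the large constant $C_0$; once the sweepout is in hand, the remaining steps amount to combinatorial bookkeeping over a continuously varying family of loops.
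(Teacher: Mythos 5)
There is a genuine gap, and it sits exactly where you locate ``the bulk of the technical work'': the sweepout you posit does not exist. A one-parameter family of arcs with endpoints on $\partial D$, starting and ending at point curves and sweeping out $D$, must contain an arc passing through (or arbitrarily near) any prescribed interior point; if $x$ realizes $\sup_{x\in D}dist(x,\partial D)$, that arc has length at least $2\,dist(x,\partial D)$. For a long thin capped tube of width $w$ and length $L$ (so $|\partial D|=w$, $Area(D)\approx wL$, $diam(D)\approx L$), this forces some arc to have length $\approx 2L\gg \sqrt{wL}=\sqrt{Area(D)}$. So no universal $C_0$ with $|\alpha_s|\le C_0\sqrt{Area(D)}$ can exist; the diameter term must appear \emph{inside} the arc-length bound, not merely in the ``fix-ups'' for discontinuities of the endpoints. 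Relatedly, the ``quadratic planar isoperimetric inequality'' is not available on a general Riemannian disc; the correct substitute is Besicovitch's inequality, which only yields a short chord between \emph{opposite} quarters of the boundary, and the boundary length of the resulting sub-pieces can grow under bisection (it is bounded by $\tfrac34|\partial D'|+\sqrt{Area(D')}$, not by $|\partial D'|$), so the iteration does not obviously terminate with uniformly short chords.

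Once the bound on the arcs is corrected to allow a $2\,diam(D)$ (in fact $2d_D$) term, your ``key ingredient'' becomes essentially equivalent to the theorem itself: the paper's quantity $pdias(D)$ is precisely the optimal sup of arc lengths in such a relative sweepout between the two halves of $\partial D$, and bounding it is the entire content of Sections 2--5. The paper does this by a double induction: first a reduction (via Besicovitch chords) to subdiscs with $|\partial D'|\le 6\sqrt{Area(D)}$, then an induction on $\lfloor\log_{4/3}(Area(D')/\epsilon)\rfloor$ in which a Papasoglu-type subdivision (Proposition 3.2) cuts off a definite fraction of the area by a curve of length $\le 2\sqrt3\sqrt{Area}+\delta$, with separate lemmas (4.1, 4.2) controlling how $d_{D'}$ degrades for subdiscs --- this last point is what keeps the diameter contribution at $2d_D$ rather than accumulating over the $O(\log)$ levels of bisection. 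Your proposal does not address how a hierarchy of bisecting chords is converted into a single continuous monotone family of arcs with controlled lengths, nor how endpoint motion along $\partial D$ is kept to total length $2|\partial D|$; both require the inductive bookkeeping that constitutes the actual proof. As written, the argument assumes the theorem's hardest component in a form that is, moreover, quantitatively false.
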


It easy to see that any upper bound for the lengths of $\vert\gamma_t\vert$ should be greater than $2diam(D)$. Therefore the upper bound provided by Theorem 1.1
is optimal for fixed values of $Area(D)$ and $\vert\partial D\vert$, when $diam(D)\longrightarrow\infty$. However, the next theorem provides a better bound,
when $Area(D)\longrightarrow\infty$ or $\vert\partial D\vert\longrightarrow\infty$ and immediately implies Main Theorem A stated above.
%The example of Frankel and Katz implies that any such upper bound should grow at least as fast as $\ln Area(D)$,
%and, therefore, the next estimate is optimal within to a constant factor:

\begin{theorem} \label{main2}
For any Riemannian 2-disc $D$ and a point
$p \in \partial D$ there exists a homotopy $\gamma_t$ of loops based at $p$ with $\gamma_0 = \partial D$
and $\gamma_1=\{ p \}$,
such that
$$|\gamma_t| \leq |\partial D| + 159 diam(D)+40diam(D)\max\{0,\ln{\sqrt{Area(D)}\over diam(D)}\}$$%$$\leq \vert\partial D\vert +200diam(D)\max\{0,\ln{\sqrt{Area(D)}\over diam(D)}\}.$$
for all $t \in [0,1]$.
\end{theorem}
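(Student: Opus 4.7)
My plan is to prove Theorem \ref{main2} by induction on $Area(D)$, shaving off a factor of $\frac{2}{3}$ in area per inductive step at a cost of $O(diam(D))$ additional length in the intermediate loops, with the base case $\sqrt{Area(D)} \lesssim diam(D)$ handled by a refinement of Theorem \ref{main} producing a coefficient $1$ in front of $|\partial D|$ rather than $2$. The stated constants $159$ and $40$ suggest a budget of roughly $20 \ln(3/2)\, d \approx 8.1\, d$ of extra length per inductive level, matching the asymptotic rate $12/\ln(3/2)$ mentioned in the remark following the theorem.

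The technical heart of the proof should be a slicing lemma: whenever $\sqrt{Area(D)} > diam(D)$, there exists a simple arc $\alpha \subset D$ with endpoints $q_1, q_2 \in \partial D$ and length $|\alpha| \leq C \cdot diam(D)$, for an absolute constant $C$ (around $4$), partitioning $D$ into subdiscs $D_1, D_2$ with $Area(D_i) \leq \frac{2}{3} Area(D)$. Naive co-area estimates for the distance function from $p$ yield only a bound of order $Area(D)/diam(D)$, which is too large; achieving length $O(diam(D))$ with a small constant requires a more refined sweepout of $D$ by curves of length $O(diam(D))$, followed by an intermediate-value argument to select one whose slice has area in the prescribed range. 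I would expect this to build on the short-sweepout techniques developed by Liokumovich in the 2-sphere setting.

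Given the slicing lemma, I would assemble the inductive homotopy in three stages. Parametrize $\partial D = \mu_1 \cdot \mu_2 \cdot \mu_3$ as a based loop at $p$, with $\mu_1$ running from $p$ to $q_1$ along $\partial D$, $\mu_2$ from $q_1$ to $q_2$ along $\partial D \cap \partial D_1$, and $\mu_3$ from $q_2$ back to $p$. Stage A: insert $\alpha \cdot \alpha^{-1}$ at $q_1$ via a sliding homotopy through loops of length $\leq |\partial D| + 2 C\, diam(D)$, yielding $\mu_1 \cdot \alpha \cdot (\alpha^{-1} \cdot \mu_2) \cdot \mu_3$. Stage B: the nested sub-loop $\alpha^{-1} \cdot \mu_2$ based at $q_2$ is $\partial D_1$; contract it inside $D_1$ by the inductive hypothesis (applicable since $Area(D_1) \leq \frac{2}{3} Area(D)$), during which the full loop length is at most $|\partial D| + 2|\alpha| + \mathrm{excess}(D_1)$. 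Stage C: the loop is now $\partial D_2 = \mu_1 \cdot \alpha \cdot \mu_3$; contract it in $D_2$ by a second inductive application, through loops of length $\leq |\partial D| + |\alpha| + \mathrm{excess}(D_2)$. Since the stages run sequentially, the worst-case loop length is the maximum (not sum) of the three bounds, and using $\mathrm{excess}(D_i) \leq 159\, d + 40\, d \, \ln(\sqrt{(2/3) Area(D)}/d)$ the savings of $20 \ln(3/2)\, d \approx 8.1\, d$ in the log term absorb the $2 C d$ overhead provided $C \leq 10 \ln(3/2) \approx 4.05$, closing the recursion.

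The main obstacles are twofold. First, proving the slicing lemma with a sharp enough $C$ is the principal technical hurdle; it requires producing a short sweepout and then selecting a slice by an intermediate-value-type argument applied to \emph{area} (not merely to length), which is trickier than standard co-area arguments. Second, one must refine the base case to achieve excess $\leq 159\, diam(D)$ when $\sqrt{Area(D)} \leq diam(D)$, removing the factor $2$ on $|\partial D|$ present in Theorem \ref{main}; I expect this to go by bisecting $\partial D$ at $p$ and a second point $p'$, choosing a geodesic $\gamma$ of length $\leq diam(D)$ between them, and path-homotoping each half of $\partial D$ to $\gamma$ through the respective side, keeping the total loop length below $|\partial D| + O(diam(D))$ throughout.
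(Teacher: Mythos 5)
Your overall architecture --- induct on area, cut off a $\frac{2}{3}$ fraction by a diameter-length arc, pay $O(d)$ per level, and separately reduce the coefficient of $|\partial D|$ from $2$ to $1$ --- is exactly the paper's (Lemma 7.2 combined with Proposition 7.3 and Theorem 6.1 B). But your route to the key slicing lemma is a dead end. You propose to obtain the short bisecting arc by producing ``a more refined sweepout of $D$ by curves of length $O(diam(D))$'' and then selecting a slice by an intermediate-value argument on area. No such sweepout exists in general: the Frankel--Katz examples (and those of [L]) show that any sweepout of such discs must contain curves of length at least $const\ d\ln(\sqrt{A}/d)$ --- indeed, Theorem 1.2 itself is precisely the assertion that one cannot do better than this, so your proof of the lemma would presuppose a strengthening of the theorem you are trying to prove. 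The paper's Theorem 6.1 B establishes the bisection statement (with $|\alpha|\leq 2\sup_{x}dist(x,\partial D)+\epsilon\leq 2\,diam(D)+\epsilon$, i.e.\ $C=2$) by an entirely different mechanism: assume every non-self-intersecting arc of length $\leq 2\delta_D+\epsilon$ with endpoints on $\partial D$ cuts off area $\leq\frac{1}{3}Area(D)-\epsilon^2$, take a fine triangulation, map each vertex to a nearest point of $\partial D$, map each edge to the boundary arc enclosing the smaller disc, and derive a contradiction from the resulting retraction of $D$ onto $\partial D$ (an area count shows the three small discs attached to a triangle cannot cover $D$). No global short sweepout and no continuity-of-area selection is needed.

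Two further gaps. First, your base-case reduction (bisect $\partial D$ at $p$ and a second point, join by a geodesic, and path-homotope each half across its side) does not remove the factor $2$ on $|\partial D|$: applying the known bound of Theorem 1.1 to either half-disc still doubles the length of that half of the boundary. The paper's Proposition 7.3 instead sweeps $l_1$ to the minimizing geodesic $\beta$ through a fan of minimizing geodesics $\alpha_i$ emanating from $p$, so that the doubling is only ever applied to thin subdiscs of perimeter $\leq\min\{2t,2(|\partial D|-t),2\,diam(D)\}+\epsilon$, costing $O(diam(D))$, while the untouched tail of $l_1$ is carried along with coefficient $1$. Second, in your Stages B--C you bound the excess of $D_i$ using $diam(D)$, but the intrinsic diameter of a subdisc can exceed that of $D$; the paper avoids a multiplicative blow-up of constants across the induction by tracking two separate quantities, $\delta_D=\sup_x dist(x,\partial D)$ (monotone under inclusion, and the quantity that actually controls the cut length at every level) and $d_D$ (which grows only additively by $|\beta|\leq 2\delta_D$ per level, via Lemma 4.2). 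Without bookkeeping of this kind the recursion does not close with the stated constants.
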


%Our main theorem is an immediate corollary of Theorem 1.6.

As a consequence of the previous theorems we obtain related results about diastoles of Riemannian
2-spheres $M$. A $diastole$ of $M$ was defined by F. Balacheff and S. Sabourau
in \cite{BS} as
$$dias(M) = \inf_{(\gamma_t)} \sup_{0 \leq t \leq 1} |\gamma_t|$$
where $(\gamma_t)$ runs over continuous families of {\it free loops} sweeping-out $M$. More precisely, the family  $(\gamma_t)$ corresponds to
a generator of $\pi_1 (\Lambda M, \Lambda^0 M)$, where $\Lambda M$ denotes the space
of free loops on $M$ and $\Lambda ^0 M$ denotes the space of constant loops.

In [S, Remark 4.10] S.Sabourau gave an example of Reimannain two-spheres with
arbitrarily large ratio $\frac{dias(M_n)}{\sqrt{Area(M_n)}}$.
In \cite{L} the first author gave an example of Riemannian two-spheres $M_n$ with
arbitrarily large ratio $\frac{dias(M_n)}{diam(M_n)}$. We show that if
both the diameter and the area of $M$ are bounded,
the  diastole can not approach infinity. Moreover, for every $p\in M$
one can define $Bdias_p(M)$ by the formula
$$Bdias_p(M) = \inf_{(\gamma_t)} \sup_{0 \leq t \leq 1} |\gamma_t|,$$
where $(\gamma_t)$ runs over continuous families of {\it loops based at $p$} sweeping-out $M$. Now define the {\it base-point diastole} $Bdias(M)$ as
$\sup_{p\in M} Bdias_p(M)$. It is clear that
$Bdias(M)\geq dias(M)$.
We prove the following
inequalities:

\begin{theorem} \label{sphere} {\bf (Main Theorem B.)}
For any Riemannian 2-sphere $M$ we have
$$ A.\ \ \  Bdias(M)\leq 664\sqrt{Area(M)} + 2 diam(M);$$
%$$ 2. dias(M)\leq 276\sqrt{Area(M)}+159 diam(M);$$
%$$ B.\ \ \  Bdias(M)\leq 561\sqrt{Area(M)}+4 diam(M);$$
$$B.\ \ \  Bdias(M)\leq 159 diam(M)+40diam(M)\max\{0,\ln{\sqrt{Area(M)}\over diam(M)}\}.$$
Moreover,
as ${diam(M)\over\sqrt{Area(M)}}\longrightarrow 0$,
$$\ \ \  Bdias(M)\leq ({12\over\ln{3\over 2}}+o(1))diam(M)\ln{\sqrt{Area(M)}\over diam(M)}.$$
Furthermore, for each $p\in M$ there exists a sweep-out of $M$ by simple loops based at $p$ that pairwise
intersect only of $p$ and have lengths satsfying the upper bounds in the right hand sides of the above inequalities.
%Also, there exists $p\in M$ such that 
%$$ A_1.\ \ \  dias_p(M)\leq \K \sqrt{Area(M)} + 2 diam(M);$$
%and
%$$ A_2.\ \ \  dias_p(M)\leq 543\sqrt{Area(M)} + 4 diam(M);$$
\end{theorem}

%The first of these inequalities provides the best estimate, when ${diam(M)\over\sqrt{Area(M)}}\geq 15$.
{\bf Remark.} This theorem had been used in [LNR2] to prove a better bound for $dias(M)$, when $\sqrt{Area(M)}<<diam(M)$, where we proved that
if $diam(M)>\sqrt{3}\sqrt{Area(M)}$, then $dias(M)\leq diam(M)+700\sqrt{Area(M)}.$
This fact was established as a part of the proof of Theorem 1.3 in [LNR2]. 
(However, it had not been explicitly mentioned there.)

\medskip

We also noticed that one can modify the examples of
[FK], [L] and [L2]  to construct sequences of Riemannian
manifolds $M_i$ diffeomorphic to $S^2$  demonstrating that our upper bounds for $Bdias(M)$ and even for $dias(M)$ given by Theorem B are optimal up to a constant factor.
More precisely, we prove the following theorem:

\begin{theorem} \label{BB} {\bf (Theorem BB.)}
1.  There exists a sequence of Riemannian metrics on
$S^2$ such that ${diam(M_i)\over\sqrt{Area(M_i)}}\longrightarrow 0$ and 
$dias(M_i) \geq {\sqrt{3}\over 2\ln 3}diam(M_i)\ln{\sqrt{Area(M_i)}\over diam(M_i)} (1-o(1)) > 0.788 diam(M_i)\ln {\sqrt{Area(M_i)}\over diam(M_i)}.$
\par\noindent
2. There exists a sequence of Riemannian metrics on
$S^2$ such that ${diam(M_i)\over\sqrt{Area(M_i)}}\longrightarrow\infty$
and $dias(M_i) > diam(M_i) + 0.0476 \sqrt{Area(M_i)}$ for 
all values of $i$. 
\end{theorem}
%involves 
%ideas from [L] and [L2]  and
%(The resulting Riemannian $2$-spheres
%look like very thin and long
%ellipsoids of rotation with a disc near one of its poles
%replaced by a Frankel-Katz $2$-disc 
%with area that is much larger
%than the area of the ellipsoid. The formal proof is very similar to the proof of Theorem AA below.
%It yields an explicit value of $const$.)
%Combining part 1 of Theorem BB with inequality A we see that
%$Bdias(M)=2diam(M)+O(\sqrt{Area(M)}),$
%when ${\sqrt{Area(M)}\over diam(M)}\longrightarrow 0$,
%and the dependence on
%$Area(M)$ in
%$O(\sqrt{Area(M)})$ cannot be improved.
\par
The proof of Theorem BB is similar to the proof of Theorem AA, but one needs to use a more difficult theorem by G. Chambers and
the first author from [CL] instead of the monotonicity theorem from [CR] used in the proof of Theorem AA.  One can combine the second part of this theorem
with the remark after the text of Theorem B and conclude that, when ${\sqrt{Area(M)}\over diam(M)}\longrightarrow 0$, $dias(M)=diam(M)+O(\sqrt{Area(M)})$,
and the dependance on $Area(M)$ in the second term cannot be improved.
%One can also use a modification of the proof of Theorem AA 
%to demonstrate
%that the inequality B provides an estimate for $Bdias(M)$,
%which is optimal up to a constant factor, when
%${diam(M)\over\sqrt{Area(M)}}\longrightarrow 0$.

Note that in \cite{BS} F.Balacheff and S.Sabourau show that if 1-parameter families of loops in the definition of
the diastole are replaced with 1-parameter families of one-cycles, then for every Riemannian surface
$\Sigma$ of genus $g$ the resulting homological diastole
$dias_Z(\Sigma)$ satisfies

$$dias_Z (\Sigma) \leq 10^8(g+1) \sqrt{Area(\Sigma)}.$$

%We use some of the ideas of their proof in this paper.
%
The proof of Theorem \ref{main} will proceed first by considering subdiscs of $D$ of small area
%and small boundary length
and then obtaining the general result for larger
and larger subdiscs by induction. The parameter of the induction will
be $\lfloor\log_{4\over 3}{Area\ D'\over \epsilon(D)}\rfloor$, where
$D'$ denotes a (variable) subdisc and $\epsilon(D)>0$ is very small.
(In particular, $\epsilon(D)$ is much smaller than the injectivity radius of $D$.)
As it is the case with many inductive arguments, it is more convenient
to prove a stronger statement. To state this stronger version of Theorem \ref{main}
we will need the following notation:

\begin{definition}
For each $p\in D$ $d_p(D)= max \{dist (p,x) | x \in D \}$.
Let $d_D= max \{d_p(D)| p \in \partial D \}$.
\end{definition}

From the definition we see that $d_D \leq diam (D)$.

If $l_1$ and $l_2$ are two non-intersecting simple paths between points $p$ and $q$ of $D$, then
$l_1 \cup -l_2$ is a simple closed curve bounding a disc $D' \subset D$.
We will show that there exists a path homotopy from $l_1$ to $l_2$ such that
the lengths of the paths in this homotopy are bounded
in terms of the area, the diameter and the length of the boundary of $D'$.

\begin{definition}
Let $D$ be a Riemannian disc and $D' \subset D$ be a subdisc.
Define a \textit{relative path diastole} of $D'$ as
$$pdias(D',D)= \text{sup}_{p,q \in \partial D'} \text{inf}_{(\gamma_t)} \text{sup}_{t \in [0,1]} |\gamma_t| $$
where $(\gamma_t)$ runs over all families of paths from $p$ to $q$ $\gamma_t: [0,1] \rightarrow D$
with $\gamma_t(0)=p$, $\gamma_t(1)=q$, where $\gamma_0=l_1$ and $\gamma_1= l_2$ are subarcs of $\partial D'=l_1 \cup -l_2$ intersecting only at their
endpoints $p$, $q$.
Let $pdias(D)=pdias(D,D)$.
\end{definition}

\begin{theorem} \label{main'}
A. For any Riemannian 2-disc $D$ with $|\partial D| \leq 2 \sqrt{3} \sqrt{Area(D)}$
$$ pdias(D) \leq |\partial D| + \D \sqrt{Area(D)} + 2d_{D}.$$

B. For any Riemannian 2-disc $D$ with $|\partial D| \leq 6 \sqrt{Area(D)}$
$$ pdias(D) \leq |\partial D| + \C \sqrt{Area(D)} + 2d_{D}.$$

C. For any Riemannian 2-disc $D$ with $|\partial D| > 6 \sqrt{Area(D)}$
%$$ pdias(D) \leq (1+2 \frac{\ln(\frac{2}{3} (\frac{|\partial D|}{\sqrt{Area(D)}}-4))}
%{(\ln\frac{4}{3}) \frac{|\partial D|} {\sqrt{Area(D)}}})|\partial D|

$$ pdias(D) \leq |\partial D|+
2\lceil \log_{\frac{4}{3}}(\frac{|\partial D|-4\sqrt{Area(D)}}{2\sqrt{Area(D)}})  \rceil \sqrt{Area(D)}
+ \C \sqrt{Area(D)} + 2d_{D}$$$$\leq 2|\partial D| +\C \sqrt{Area(D)}+2d_D.$$
D. Also, if $d\geq 3\sqrt{A}$,
$$exc(d,A)\leq 3d+{2\over\ln{4\over 3}}\sqrt{A}\ln({2\over 3}({2d\over\sqrt{A}}-4))+\C \sqrt{A}.$$
%If $d\leq 3\sqrt{A}$, then
%$$exc(d,A)\leq 3d+\C\sqrt{A}$$.
\end{theorem}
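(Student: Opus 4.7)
The plan is to prove parts A, B, and C simultaneously by induction on $N=\lceil \log_{4/3}(|\partial D|/\sqrt{Area(D)}) \rceil$, with A and B providing the base cases (``fat'' discs) and C the inductive step; part D then follows from C by a loop-level construction.

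For the base cases, since $|\partial D|\leq 6\sqrt{Area(D)}$ the disc $D$ is bulk-dominated and the path homotopy from $l_1$ to $l_2$ is constructed directly by a sweepout: decompose $D$ into pieces of diameter $O(\sqrt{Area(D)})$ and interpolate between $l_1$ and $l_2$ through short curves of length $O(\sqrt{Area(D)})$. The additive $2d_D$ comes from the cost of traveling from $p$ and $q$ to the sweepout's central region, while the two explicit constants $\D$ and $\C$ reflect the sharper packing available under A's tighter hypothesis $|\partial D|\leq 2\sqrt{3}\sqrt{Area(D)}$.

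For the inductive step (part C), the crucial geometric input is a short-chord lemma: when $|\partial D|>6\sqrt{Area(D)}$ there exists a simple chord $\gamma\subset D$ joining two boundary points, of length at most $2\sqrt{Area(D)}$, that cuts $D$ into subdiscs $D_1,D_2$ each of boundary length at most $\frac{3}{4}|\partial D|+2\sqrt{Area(D)}$. Such a chord is produced by a min-max/coarea argument over short transversal curves. Given $\gamma$, a path homotopy from $l_1$ to $l_2$ in $D$ is assembled from inductive path homotopies in the two subdiscs (routed through the chord), with each transition adding at most $2\sqrt{Area(D)}$ to the length; after $\lceil \log_{4/3}(\frac{|\partial D|-4\sqrt{Area(D)}}{2\sqrt{Area(D)}})\rceil$ reductions the remaining subdisc's boundary falls below $6\sqrt{Area(D)}$ and part B closes the induction.

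For part D, given $D$ with $\mathrm{diam}(D)\leq d$, $Area(D)\leq A$, $d\geq 3\sqrt{A}$, and $p\in\partial D$, pick $q\in\partial D$ balancing the two arcs $l_1,l_2$ of $\partial D$ and take a minimizing geodesic $\sigma\subset D$ from $p$ to $q$ (length $\leq d_D\leq d$), splitting $D$ into subdiscs $D_1,D_2$. Splice together path homotopies from part C inside each $D_i$ into a loop homotopy at $p$: first deform $l_1$ to $\sigma$ in $D_1$ (yielding loops $\gamma_t\cdot l_2^{-1}$), then $\sigma$ to $l_2$ in $D_2$, then contract $l_2\cdot l_2^{-1}$ along $l_2$. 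A preliminary iterated application of the short-chord lemma reduces the effective boundary length of the subdisc being contracted from $|\partial D|$ down to $O(d)$, accounting for the $\frac{2}{\ln(4/3)}\sqrt{A}\ln(\frac{2}{3}(\frac{2d}{\sqrt{A}}-4))$ factor; careful bookkeeping then yields an excess of at most $3d$ (coming from $|\sigma|\leq d$ plus the two $d_{D_i}\leq d$ contributions) plus $\C\sqrt{A}$. The principal obstacle is the short-chord lemma underlying part C, which requires a delicate min-max/coarea argument combined with an isoperimetric bound; a secondary challenge is the part D accounting, ensuring that only $3d$, and not a term proportional to $|\partial D|$, appears in the final excess.
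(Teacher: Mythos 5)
There is a genuine gap, and it is in the part you treat as routine: the base cases A and B. A disc with $|\partial D|\leq 6\sqrt{Area(D)}$ is \emph{not} ``bulk-dominated'' and cannot be decomposed into pieces of diameter $O(\sqrt{Area(D)})$: think of a disc of small area with long thin tentacles, whose intrinsic diameter is arbitrarily large compared to $\sqrt{Area(D)}$. This is precisely why the additive $2d_D$ term is unavoidable, and your sweepout sketch gives no mechanism producing it. The paper's actual argument for A and B is an induction on \emph{area} (base case: discs of area $<\epsilon$, which are nearly Euclidean), where the inductive step cuts $D''$ by a closed curve of length $\leq 2\sqrt{3}\sqrt{Area}+\delta$ splitting the area into pieces between $1/4$ and $3/4$ of the total (Papasoglu's subdivision theorem, Theorem 3.1 and Proposition 3.2 --- this is the real min-max ingredient of the proof, which your proposal omits). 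The hard case is when this curve is disjoint from $\partial D''$, so that one must homotope across an annulus; handling this requires connecting $p,q$ to the interior curve by minimizing geodesics and repeatedly slicing the resulting quadrilateral by Besicovitch chords (Proposition 5.1, Lemma 5.2), and this is where the diameter enters via the comparison lemmas of Section 4. By contrast, the step you flag as the ``principal obstacle'' --- the short chord for part C --- is elementary: subdivide $\partial D'$ into four equal arcs and apply Besicovitch's lemma to get a chord of length $\leq\sqrt{Area(D')}$ joining opposite arcs; no coarea or isoperimetric input is needed there (Lemma 2.2).

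Part D as you describe it also does not yield the stated bound. Splitting $D$ by a single minimizing geodesic $\sigma$ from $p$ to $q$ produces subdiscs with $|\partial D_i|\approx \tfrac12|\partial D|+d$, so the logarithmic term coming from part C would involve $|\partial D|$, not $2d$; and iterating the Besicovitch chord only drives the boundary length down to $6\sqrt{Area(D)}$ at a cost of $\sqrt{Area(D)}$ per step, with no reason to stop at $O(d)$. The paper instead sweeps $l_1$ to $\sigma$ through a one-parameter family of minimizing geodesics $\alpha_i$ from $p$ to points $l_1(a_i)$ (Proposition 7.3); each elementary subdisc bounded by $\alpha_i\cup l_1([a_i,a_{i+1}])\cup-\alpha_{i+1}$ then has perimeter $\leq\min\{2(|\partial D|-t),2t,2\,diam(D)\}+O(\epsilon)\leq 2\,diam(D)+O(\epsilon)$, which is what replaces $|\partial D|$ by $2d$ inside the logarithm and keeps the additive excess at $3d$.
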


Of course, Theorem \ref{main} immediately follows from Theorem \ref{main'} C.
The second inequality in Part C of the theorem can be easily proven
 by observing that 
$\frac{2 \ln(\frac{2(x-4)}{3})}{(\ln\frac{4}{3})x}<0.9735<1$
for $x \in [6,\infty]$. Setting $x=\frac{|\partial D|}{\sqrt{Area(D)}}$
we obtain the desired inequality. The last inequality provides a much better
upper bound for $exc(d,A)$, when $\sqrt{A}<<d$ and implies
that $\lim_{{\sqrt{A}\over d}\longrightarrow 0}exc(d,A)\leq 3d$.

\noindent

As $exc(d,A)\geq 2d$, it is natural to ask the following question:

{\bf Question.} Is it true that, when $d$ is fixed, and $A\longrightarrow 0$, $exc(d,A)=2d+O(\sqrt{A})?$

In the first version of this paper ([LNR], v.1) we posed this question as an open problem. It was later resolved by P. Papasoglu in [P2].
He presented a beautiful and ingenuous argument demonstrating that our Theorem 1.1 imples that $exc(d,A)\leq 2d+1000\sqrt{A}$.
Comparing his result with the second part of Theorem AA we see that $exc(d,A)=2d +O(\sqrt{A})$, as $A=o(d^2)$, and the dependence on $A$ here
cannot be improved.

Here is the plan of the rest of the paper. In the next section we will
review a well-known theorem of Besicovitch (sometimes called Besicovitch lemma).
This theorem implies that for each $2$-disc of area $A$ with a boundary of length $L$ there exist two points on the boundary
such that the distance between these points in the disc does not exceed $\sqrt{A}$ but the length of a shortest
arc of the boundary connecting these points is at least ${L\over 4}$. This theorem implies that if $L>2\sqrt{A}$
then one can subdivide the disc into two subdiscs with smaller areas and shorter boundaries by a minimizing geodesic connecting
these points. Below we will call this minimizing geodesic {a Besicovitch cut} (see Fig. 1).

\begin{figure}[center]
\includegraphics[scale=0.4]{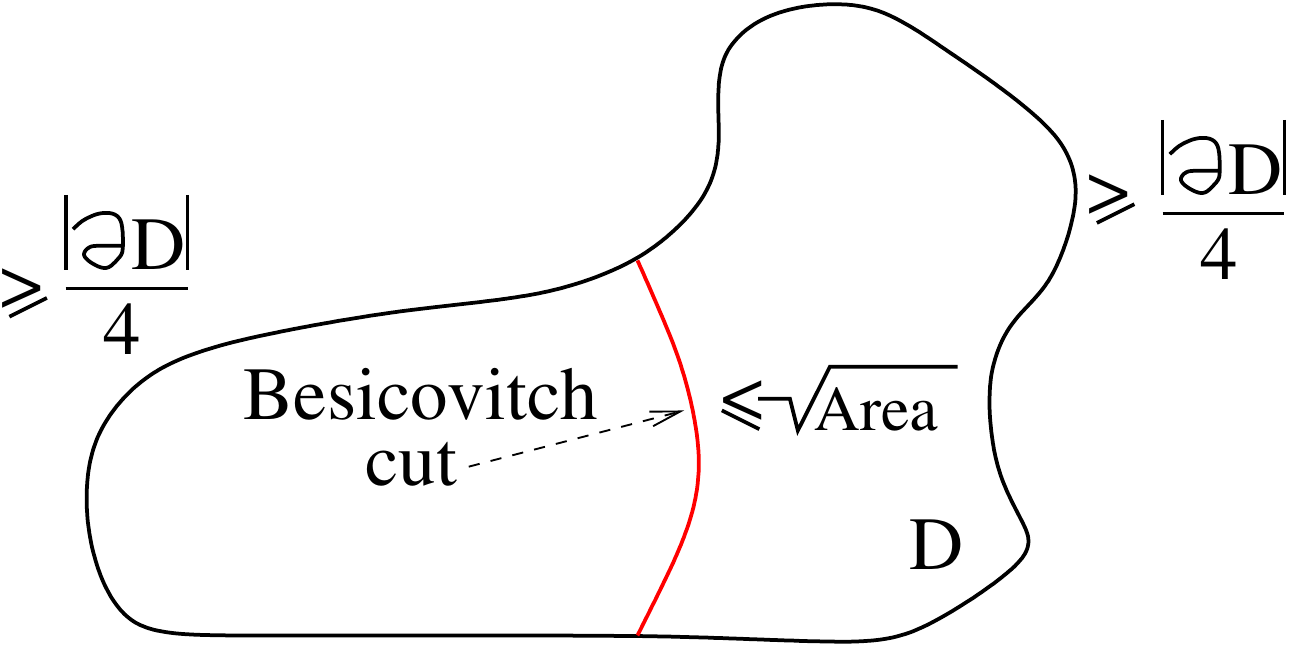}
\caption{Besicovitch lemma}
\label{lnr1}
\end{figure}
%\realfig{Fig. 1}{lnr1.eps}{Besicovitch lemma}{0.4\hsize}

Then we will use this theorem to reduce the estimates of Theorem 1.6 A-C
for a disc $D$ to slightly stronger estimates
for all subdiscs $D'$ of $D$
such that the length of the boundary of $D'$ does not exceed
$6\sqrt{Area(D)}$. The idea here is to subdivide the original disc into smaller and smaller
subdiscs with shorter and shorter boundaries until their lengths will 
become less than $6\sqrt{Area(D)}$. In the same section we apply this result to prove
the desired assertion for subdiscs of $D$ with areas not exceeding a very small constant. (The idea is that this result becomes almost obvious if the length
of the boundary is also very small).

At the beginning of section 3 we will review a result by P. Papasoglu ([P])
asserting that for every Riemannian $2$-sphere $S$ and every $\epsilon$ there
exists a simple closed curve of length $\leq 2\sqrt{3}\sqrt{Area(S)}+\epsilon$ that
divides the sphere into two domains with areas at least ${1\over 4}Area(S)$
 and at most ${3\over 4}Area(S)$.  Then we will
prove an analogous result for Riemannian $2$-discs. Section 4 contains
two auxilliary results about a relationship of $d_D$ and $d_{D'}$ for
a subdisc $D'$ of $D$. One of these results (Lemma 4.1) asserts that if the distance 
between
a point $p\in\partial D$ and and a point $p'\in\partial D'$ does not exceed $l$
then $$d_{D'}+l\leq d_D +\vert\partial D'\vert,\ \ \ \ (*)$$
see Fig. 2.

\begin{figure}[center]
\includegraphics[scale=0.3]{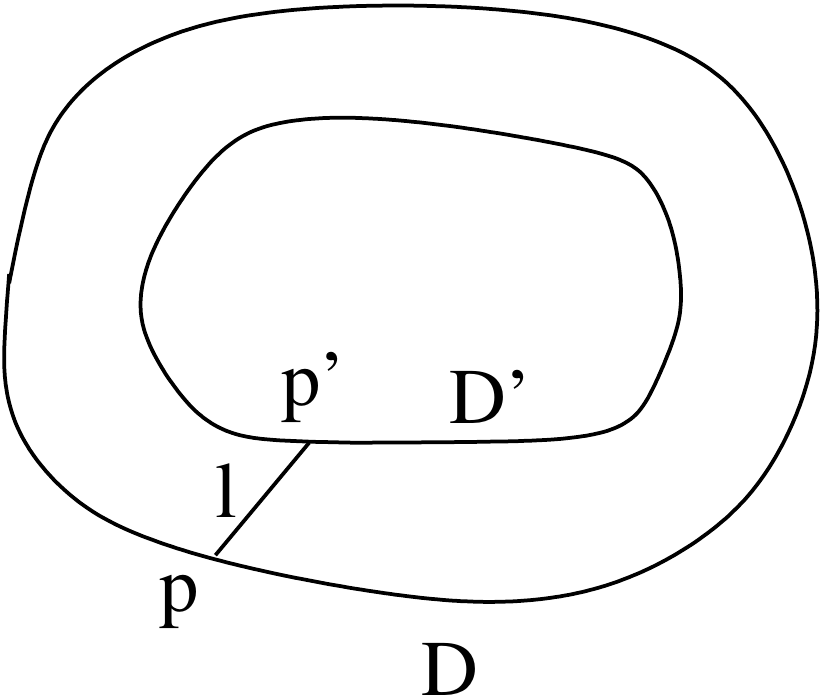}
\caption{}
\label{lnr2}
\end{figure}
%\realfig{Fig. 2}{lnr2.eps}{}{0.3\hsize}

This property is easy to establish, but as we
will see, it is very helpful in the proof of our main result, and is the main reason why
$d_D$ in our estimates suites us better than the diameter of $D$.

Section 5 contains the proof of 
Theorem  1.6 A-C (and, thus, Theorem 1.1). As we have already mentioned the proof is
inductive. On each step of induction we increase the area of subdiscs $D'$ of $D$ by a factor
of ${4\over 3}$ and prove that these new subdiscs $D'$ still satsfy the inequalities of Theorem 1.6.
At this point we already know that it is sufficient to prove a slightly stronger
inequality for subdiscs $D''$ of $D'$  with
boundary of length not exceeding $6\sqrt{Area(D')}$. The idea is to apply a version of the quoted result
of Papasoglu for discs to $D''$. We obtain a disc $\bar D\subset D''$ of area $\leq {3\over 4}Area(D'')\leq {3\over 4}Area (D')$.
In order to construct a path homotopy between an arc $l_1$ of $\partial D''$ connecting some two points $p,q\in\partial D''$
with its complementary arc $l_1$ we proceed as follows.
Connect $p$ and $q$ with the closest points $p_1, q_1$ of $\partial \bar D$ by minimizing geodesics
that we denote as $\alpha_1$ and $\alpha_2$. We would like to first homotope $l_1$ to the join of arcs $\alpha_1, \gamma_1$ and $\alpha_2$,
where $\gamma_1$ denotes one of the two arcs of $\partial\bar D$ connecting $p_1$ and $p_2$, then homotope $\gamma_1$ into
its complement $\gamma_2$ in $\partial\bar D$ while keeping $\alpha_1$ and $\alpha_2$ intact, and finally homotope
the join of $\alpha_1,\gamma_2$ and $\alpha_2$ into $l_2$ (see Fig. 3).

\begin{figure}[center]
\includegraphics[scale=0.3]{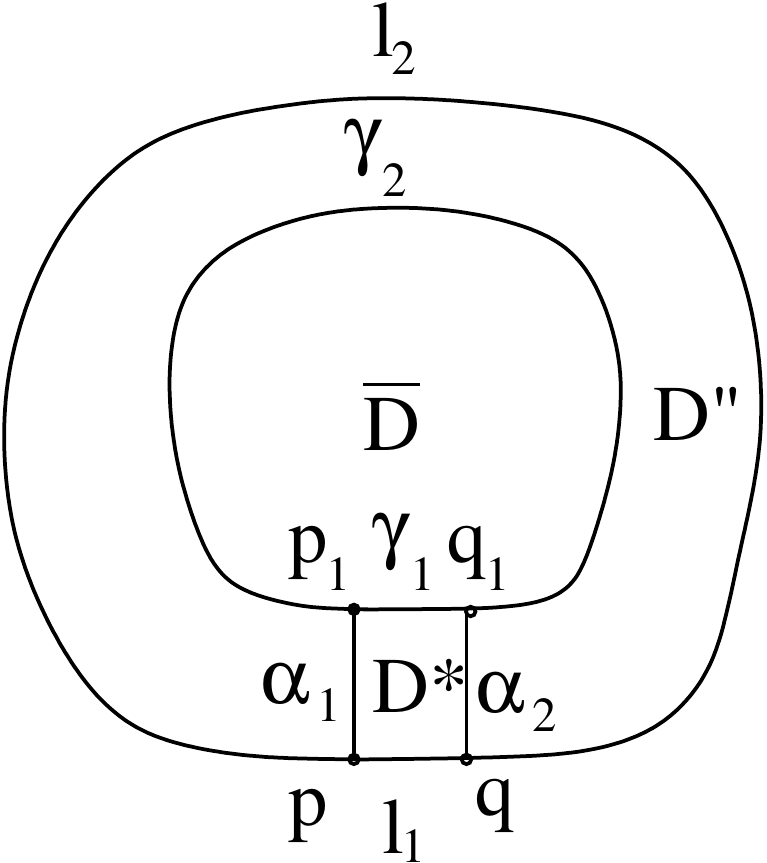}
\caption{}
\label{lnr3}
\end{figure}
%\realfig{Fig. 3}{lnr3.eps}{}{0.3\hsize}

The basic idea here is that all these three homotopies
are between complementary arcs of subdiscs of $D''$ with areas $\leq {3\over 4}Area(D')$, and therefore the induction
assumption applies. Yet a cursory examination of this idea reveals some potential flaws. First, consider the second
step of these construction. We are going to keep $\alpha_1$ and $\alpha_2$
constant, and will be looking for a path homotopy between $\gamma_1$ and
$\gamma_2$ in $\bar D$. The induction assumption leads to an upper bound for lengths of curves in this path homotopy
that contains a term equal to $2d_{\bar D}$. Note that $d_{\bar D}$ can be as large as $d_{D''}$ or even larger. On top of that
we do not have an upper bound for lengths of either of geodesics $\alpha_i$ that
is better than $d_{D''}$. Thus, we could end up
with an upper bound involving a term equal to $4d_{D''}$ instead of $2d_{D''}$. But it is here that we are saved
by the property $(*)$. This property implies that $2d_{\bar D}+length(\alpha_1)+length (\alpha_2)$ can be majorized by $2d_{D''}+
2\vert\partial\bar D\vert$. On the other hand $\vert\partial\bar D\vert$ can be majorized in terms of $\sqrt{Area(D'')}$. We do not mind 
extra terms of the form $const\ \sqrt{Area(D')}$, since when we apply the induction assumption to subdiscs of $D''$, their areas do
not exceed ${3\over 4}Area(D')$. Therefore
we have large ``savings" ($\sim 91.91\sqrt{Area(D')}$) because $686\sqrt{{3\over 4} Area (D')}$ is much less than the allowed term
$686\sqrt{Area(D')}$. We can use the difference to compensate for extra
terms of the
form $const\sqrt{Area(D')}$ that appear as the result of our constructions.
(As it turns out, the total contribution of such terms is majorized by
$(78+8\sqrt{3})\sqrt{Area(D')}\sim 91.86\sqrt{Area(D')}$.)
\par
Yet the second difficulty with our basic idea is more serious. Consider the very first step of the suggested three-step construction
of a desired path homotopy. We are looking for a path homotopy between $l_1$ and $\alpha_1*\gamma_1*\alpha_2$ in the subdisc $D^*$ of $D''$
bounded by these two arcs. The upper bound that we obtain from the induction assumption will involve the summand $\vert\partial D^*\vert+2d_{D^*}$.
The length of $\vert\partial D^*\vert$ is greater than 
$length(\alpha_1)+length(\alpha_2)$, and our best upper bound 
for $d_{D^*}$ is
$d_{D''}+length(\alpha_1)+length(\alpha_2)+length (\gamma_1)$ (Lemma 4.2). As 
$length(\alpha_1)+length(\alpha_2)$ can be almost as large as $2d_{D''}$, our upper bound for the expression $\vert\partial D^*\vert +2d_{D^*}$ will contain
the summand $6d_{D''}$, which is significantly
worse than $2d_{D''}$ in the required upper bound and is, therefore,
unsuitable for our purposes.
Of course, this situation will be problematic only if the length of either $\alpha_1$ or $\alpha_2$ is
much larger
than $\sqrt{Area(D')}$ (as in the opposite case we will obtain an upper
bound of the form $2d_{D''}+const\sqrt{Area(D')}$, which is acceptable for us).
It is easy to see that in this case both $\alpha_1$ and $\alpha_2$ 
will be very long
in comparison with $\sqrt{Area(D')}$. Now our key observation is that in this case an application of Besicovitch lemma will produce a Besicovitch cut that
will connect a point on $\alpha_1$ with a point on $\alpha_2$ (Lemma 5.2). 
This cut divides $D^*$ into two subdiscs $D_1^*$ and $D_2^*$ such that 
their boundaries are significantly shorter than the boundary of $D^*$ (see Fig. 4).

\begin{figure}[center]
\includegraphics[scale=0.3]{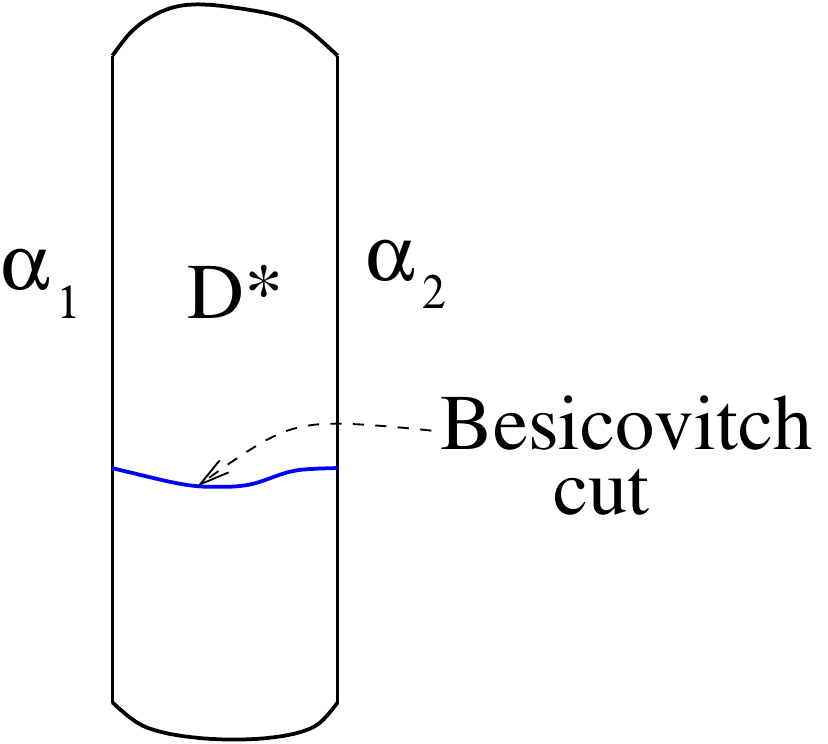}
\caption{ If the lengths of $\alpha_1,\alpha_2>>\sqrt{Area(D'')}>\sqrt{Area(D^*)}$ are much greater than lengths of two other sides
then each Besicovitch cut connects $\alpha_1$ and $\alpha_2$.}
\label{lnr4}
\end{figure}
%\realfig{Fig. 4}{lnr4.eps}{If the lengths of $\alpha_1,\alpha_2>>\sqrt{Area(D'')}>\sqrt{Area(D^*)}$ are much greater than lengths of two other sides
%then each Besicovitch cut connects $\alpha_1$ and $\alpha_2$.}{0.3\hsize}
 
If the lengths of pairs of sides inherited from $\alpha_i$
are still large in comparison with $\sqrt{Area(D')}$ for either of these discs, then  we will apply the Besicovitch lemma to this disc again. And so on. Eventually we will end up with a partition
of $D^*$ into a stack of discs with boundaries of lengths
commensurable with $\sqrt{Area(D')}$ (see Fig. 5). 

\begin{figure}[center]
\includegraphics[scale=0.2]{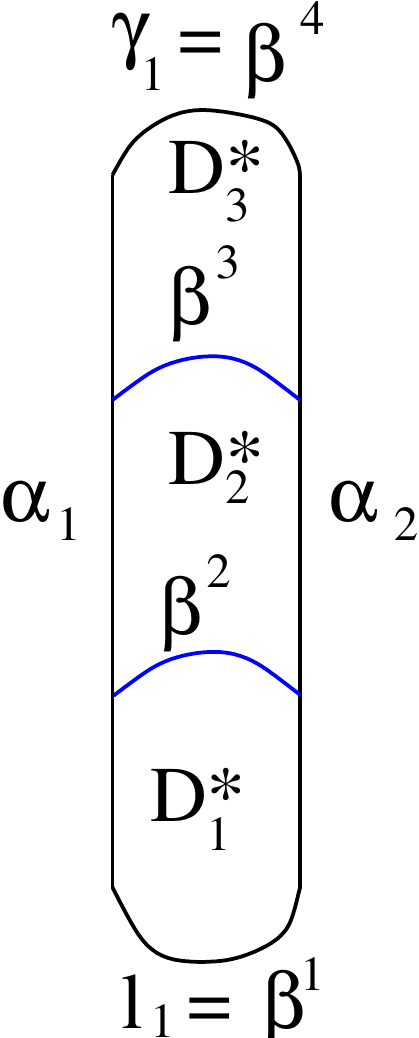}
\caption{Stack of discs partitioning $D^*$.}
\label{lnr5}
\end{figure}
%\realfig{Fig. 5}{lnr5.eps}{Stack of discs partitioning $D^*$.}{0.15\hsize}

Denote the number of discs in this stack by $M$. Then we will start constructing our path homotopy from
$l_1$ to $\alpha_1*\gamma_1*\alpha_2$. On each of $M$ steps
we will be constructing a path homotopy through one of the discs in the stack. On the $k$th step, ($k=1,\ldots, M$), the homotopy will pass from
$\alpha_1^{k}*\beta^k*\alpha_2^{k}$ to $\alpha_1^{k+1}*\beta^{k+1}*\alpha_2^{k+1}$ through $D^*_k$, where $D^*_k$ denotes the $k$th disc in the stack,
$\beta^k$ and $\beta^{k+1}$ are Besicovitch cuts forming the
``bottom" and ``top"
sides of $D^*_k$ (with the exception of $\beta^1$ and $\beta^M$.
$\beta^1=l_1$ and $\beta^M=\gamma_1$.)
Further, let $\alpha_i^k$ denote initial segments of $\alpha_i$ up to the endpoints of $\beta^k$. 
Again the quoted property $(*)$ of $d_{D^*_k}$ saves us from an undesirable accumulation of lengths of $\alpha_i^k$  with
$d_{D^*_k}$, when we use the induction assumption to bound the lengths of curves in all of these path homotopies.
\par
In section 7 we will
deduce Theorem 1.2 from Theorem 1.1.
Here the key intermediate result (Theorem 6.1)
is that an arbitrary Riemannian $2$-disc $D$ can be subdivided into two subdiscs
with areas in the interval $[{1\over 3}Area(D)-\epsilon^2, {2\over 3} Area(D)
+\epsilon^2]$ by a simple curve of length $\leq 2diam(D)+2\epsilon$
connecting two points of $\partial D$, where $\epsilon$ can
be made arbitrarily small. 
The proof of this result will be given in section 6. It uses
a modification of Gromov's filling technique and is reminiscent of a proof
of a version of the result of Papasoglu quoted above presented
by F. Balacheff and S. Sabourau in [BS]. At the end of section 6 we demonstrate
that for each $a<{1\over 2}$ there exists a constant $C(a)$ such that
each Riemannian $2$-sphere of area $A$ and diameter $d$ can be divided into two discs of
area $\geq aA$ by a simple closed curve of length $\leq C(a)d$.
This theorem contrasts with the recent result of one of the authors ([L2]),
who proved that for every constant $C$ there exists a Riemannian sphere of diameter $1$
that cannot be divided into two subdomains of equal area by any $1$-cycle
of length $\leq C$.

In the case, when $\sqrt{Area(D)}>>diam(D)$, a repeated application of Theorem 6.1 enables us to break the disc $D$
into smaller and smaller discs $D_i$ until $\sqrt{Area(D_i)}$ becomes much smaller
than $d_{D_i}$. However, the lengths of boundaries of $D_i$ might
increase in the process by $const\ diam(D)\ln{\sqrt{Area(D)}\over diam(D)}$.
Now the idea is to apply
Theorem 1.6 to the resulting discs $D_i$ (and to construct the desired path
homotopy as a combination of the obtained path homotopies).
But first we will need
to prove Theorem 1.6 D. Observe that an important difference between
the upper bounds for $pdias$ in Theorems 1.6. C and Theorem 1.6 D is
the following. If we consider the estimates for $pdias$ as a function of 
$\vert \partial D\vert$ for some fixed values of $Area(D)$ and $diam(D)$,
then the estimate of Theorem 1.6. D looks like $\vert\partial D\vert +O(1)$,
but the estimate of Theorem 1.6. C contains also a logarithmic term
and looks like $\vert \partial D\vert
+ 2\log_{4\over 3}\vert\partial D\vert +O(1)$. This additional logarithmic term
means that the estimate of Theorem 1.6. C cannot be used
to prove the existence of a finite $exc(d,D)$.
To prove Theorem 1.6 D we will be using a trick that reduces
constructing of a path homotopy between two complementary arcs of $\partial D$
to constructing of path homotopies between sides of
digons formed by two minimizing geodesics between pairs of
points of $\partial D$ (Proposition 7.3). The key point is that
the length of $\partial D$ does not enter into the estimates for areas,
diameters and perimeters of such digons. Note that the diameter of any such
minimizing digon $A$ does not exceed $diam(D)$, but, in general, it is not 
true that $d_A\leq d_D$ (although it is easy to see that $d_A\leq {3\over 2}d_D$). Because of this reason we did not apply this trick on earlier stages of our proof.

At the end of section 7
we will explain how Theorem 1.3 follows from Theorems 1.1 and 1.2.
In section 8 we present proofs of Theorem AA and Theorem BB asserting that the upper bounds for $exc(d,A)$ and $dias(M)$
provided by Theorems 1.3 and 1.6 are optimal up to a constant factor.
%Our proof uses examples of Riemannian metrics from [L2], which are modifications of earlier examples
%of Riemannian metrics on $2$-discs constructed in [FK].  The proof relies on recent results of G. Chambers and the third author ([CR]).
Section 9 contains applications to upper bounds for lengths of
geodesics on Riemannian $2$-spheres.
We prove the upper bounds for lengths of different geodesics between two fixed
points
on a Riemannian $2$-sphere that depend on the diameter and the area of
the $2$-sphere. We also review applications of the results of
the present paper to upper bounds for the lengths of three
shortest simple periodic geodesics that depend only on the area and the diameter given in our recent paper [LNR2].
These estimates are better than the upper bounds that depend only on the diameter proven in [NR1], [NR2] and [R]  in the case when the ratio of the area
to the square of the diameter is very small. In fact, we believe that in this case our estimates for lengths of two of these geodesics are nearly optimal.
 
\section{Besicovitch Lemma and reduction to the case of curves with short boundaries}

The main tool of this paper is the following theorem (``Besicovitch lemma")
due to A.S. Besicovitch \cite{B} (see also
\cite{BBI} and \cite{Gr99} for generalizations and many applications of this
theorem).

\begin{theorem} \label{besicovitch}
Let $D$ be a Riemannian 2-disc. Consider a subdivision of $\partial D$ into four consecutive subarcs (with disjoint interiors)
$\partial D =a \cup b \cup c \cup d$.
Let $l_1$ denote the length of a minimizing geodesic between $a$ and $c$; $l_2$ denote the length of a minimizing geodesic between $b$ and $d$.
Then $$Area(D) \geq |l_1| |l_2|$$
\end{theorem}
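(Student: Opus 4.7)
The plan is the classical distance-function / area-formula argument. Introduce the $1$-Lipschitz functions $f(x) = \mathrm{dist}(x,a)$ and $g(x) = \mathrm{dist}(x,b)$ on $D$ and assemble them into a single Lipschitz map
$$\Phi: D \to \mathbb{R}^2, \qquad \Phi(x) = (f(x), g(x)).$$
By Rademacher's theorem $|df|, |dg| \leq 1$ almost everywhere, and consequently
$$|\det D\Phi| \;\leq\; |df \wedge dg| \;\leq\; |df|\cdot|dg| \;\leq\; 1 \quad \text{a.e.\ on } D.$$

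Next I would verify that the image of $\Phi$ covers the rectangle $R=[0,l_1]\times[0,l_2]$. Post-composing with the truncation $(u,v)\mapsto(\min(u,l_1),\min(v,l_2))$ gives a Lipschitz map $\tilde\Phi:D\to R$ whose Jacobian still satisfies $|J\tilde\Phi|\leq 1$ a.e. Because $l_1 = \mathrm{dist}(a,c)$ and $l_2 = \mathrm{dist}(b,d)$, the four corner points $a\cap b, b\cap c, c\cap d, d\cap a$ of $\partial D$ map to the corners $(0,0)$, $(l_1,0)$, $(l_1,l_2)$, $(0,l_2)$ of $\partial R$, and the arcs $a,b,c,d$ map into the left, bottom, right, and top sides of $R$ respectively. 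Hence $\tilde\Phi|_{\partial D}$ has winding number $\pm 1$ around any interior point of $R$, and a standard Brouwer degree argument yields $R \subseteq \tilde\Phi(D)$.

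Finally I would invoke the area formula for Lipschitz maps between Riemannian $2$-manifolds to estimate
$$l_1 l_2 \;=\; \mathrm{Area}(R) \;\leq\; \int_{R}\#\tilde\Phi^{-1}(y)\,dy \;\leq\; \int_{\mathbb{R}^2}\#\tilde\Phi^{-1}(y)\,dy \;=\; \int_{D}|J\tilde\Phi|\,dA \;\leq\; \mathrm{Area}(D),$$
which is the desired inequality.

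The main technical obstacle, though shallow, is justifying the pointwise Jacobian bound and the area formula when the Riemannian metric is only assumed to be, say, $C^0$ or Lipschitz rather than smooth. This can be handled either by smoothing the metric and passing to the limit in a standard approximation argument, or by appealing directly to Rademacher's theorem together with the area formula in the Lipschitz category; either route is routine. The rest of the proof reduces to the pointwise inequality $|df\wedge dg|\leq |df|\,|dg|$ and the topology of the $2$-disc.
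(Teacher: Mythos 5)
The paper does not prove this statement itself---it quotes it as Besicovitch's theorem and refers to \cite{B}, \cite{BBI} and \cite{Gr99}---and your argument is precisely the standard proof given in those references: the truncated distance-function map $\tilde\Phi=(\min(\mathrm{dist}(\cdot,a),l_1),\min(\mathrm{dist}(\cdot,b),l_2))$, the pointwise Jacobian bound $|df\wedge dg|\le|df|\,|dg|\le 1$, the degree argument showing $\tilde\Phi$ covers the rectangle, and the area formula. The proof is correct as written.
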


{\bf Corollary 2.1.A.} {\it Let $D$ be a $2$-disc. There exist two points $p,q\in\partial D$ such that $dist_D(p,q)\leq\sqrt{Area(D)}$ and $dist_{\partial D}(p,q)\geq {\vert\partial D\vert\over 4}$. Therefore, if $\vert\partial D\vert > 2\sqrt{Area(D)}$
then a minimizing geodesic connecting $p$ and $q$ in $D$ divides $D$ into two subdiscs with smaller perimeters (see Fig. 1).}

{\bf Proof.} Subdivide the boundary of $D$ into four subarcs of equal length ($={\vert\partial D\vert\over 4}$) and apply the previous theorem.

\medskip
%We refer readers to
%\cite{BBI} and \cite{Gr99}
%for modern exposition of the proof of
%Theorem \ref{besicovitch} as well as its generalizations and applications can be found in
%\cite{BBI} and \cite{Gr99}.
In this section we will use Besicovitch lemma to prove two lemmae.
Lemma \ref{boundary} implies
that the second inequality of Theorem \ref{main'} follows from the first.
Lemma \ref{small area} says that boundaries of small subdiscs of $D$ can be contracted through short curves.
The proof of Lemma \ref{boundary} uses the idea of Corollary 2.1.A: We inductively divide the disc into discs with smaller areas and
considerably smaller perimeters. In order to make perimeters considerably smaller we need to assume that $\vert D\vert\geq const\sqrt{Area(D)}$,
where $const > 2$. In fact, it is convenient for us to takea $const=6$ here. After each application of the Besicovitch lemma the perimiters of the discs
drop at least by a multiplicative factor $<1$; this happens until their perimeters remain $>6\sqrt{Area(D)}$. This enables one to reduce
the construction of a path homotopy between an arc of $\partial D$ and its complement to constructing similar path homotopies for subdiscs of $D$
with perimeters $\leq 6\sqrt{Area(D)}$.

\begin{lemma} \label{boundary}
\textbf{(Reduction to a Short Boundary Case)}
Let $\epsilon_0, C$ be any non-negative real numbers.
\par\noindent
A. Suppose that $|\partial D| > 6 \sqrt{Area(D)}$ and that for all subdiscs
$D' \subset D$ satisfying $|\partial D'| \leq 6 \sqrt{Area(D)}$
we have $pdias(D',D)\leq (1+\epsilon_0)|\partial D'| + C \sqrt{Area(D)} + 2d_{D'}$. Then
$$ pdias(D) \leq (1+\epsilon_0) |\partial D|
%+2 \frac{
%\ln(\frac{2}{3} (\frac{|\partial D|}{\sqrt{Area(D)}}-4))
%}
%{(\ln\frac{4}{3}) \frac{|\partial D|} {\sqrt{Area(D)}}})|\partial D|
+ 2\lceil \log_{\frac{4}{3}}(\frac{|\partial D|-4\sqrt{Area(D)}}{2\sqrt{Area(D)}})  \rceil \sqrt{Area(D)}
+ C \sqrt{Area(D)} + 2d_{D}.$$
\par\noindent
B. Assume that $D$ is contained in a disc $D_0$, and all subdiscs
$D' \subset D$ satisfying $|\partial D'| \leq 6 \sqrt{Area(D)}$
satisfy $pdias(D',D_0)\leq (1+\epsilon_0)|\partial D'| + C \sqrt{Area(D)} + 2d_{D'}$. Then
$$ pdias(D, D_0) \leq (1+\epsilon_0) |\partial D|
%+2 \frac{\ln(\frac{2}{3} (\frac{|\partial D|}{\sqrt{Area(D)}}-4))}
%{(\ln\frac{4}{3}) \frac{|\partial D|} {\sqrt{Area(D)}}})|\partial D|
+2\lceil \log_{\frac{4}{3}}(\frac{|\partial D|-4\sqrt{Area(D)}}{2\sqrt{Area(D)}})  \rceil \sqrt{Area(D)}
+ C \sqrt{Area(D)} + 2d_{D}.$$

\end{lemma}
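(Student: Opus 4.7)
My plan is to iterate Besicovitch's theorem (Theorem~\ref{besicovitch}) as a cutting tool that shrinks the boundary length by roughly a factor of $3/4$ per step. I partition $\partial D$ into four consecutive arcs $a, b, c, d$ of equal length $L/4$, where $L = |\partial D|$; by Besicovitch, the shorter of the two cross-minimizing geodesics, call it $\sigma$ with endpoints $p' \in a$ and $q' \in c$, satisfies $|\sigma| \leq \sqrt{Area(D)}$. Since $\sigma$ is minimizing, it is a simple arc with interior in the interior of $D$, so it divides $D$ into two subdiscs $D_1, D_2$ with $|\partial D_j| \leq 3L/4 + \sqrt{Area(D)}$ for each $j$. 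The arithmetic identity $(3L/4 + \sqrt{Area(D)}) - 4\sqrt{Area(D)} = \tfrac{3}{4}(L - 4\sqrt{Area(D)})$ shows that $|\partial D| - 4\sqrt{Area(D)}$ shrinks with ratio $3/4$ at each cut, so after $N = \lceil \log_{4/3}((L - 4\sqrt{Area(D)})/(2\sqrt{Area(D)})) \rceil$ cuts every sub-subdisc reaches the short-boundary regime $|\partial \cdot| \leq 6\sqrt{Area(D)}$ covered by the hypothesis.

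I will prove the bound by induction on the level $k$, defined so that a level-$k$ subdisc needs at most $k$ Besicovitch cuts to reach short-boundary pieces; the inductive statement is $pdias(D', D) \leq (1+\epsilon_0)|\partial D'| + 2k\sqrt{Area(D)} + C\sqrt{Area(D)} + 2d_{D'}$. The base case $k = 0$ is the hypothesis of the lemma. For the inductive step on a level-$k$ subdisc $D'$ with $L' = |\partial D'|$, I construct $\sigma, D_1, D_2$ as above (each $D_j$ of level at most $k-1$), fix $p, q \in \partial D'$ with two arcs $l_1, l_2$ joining them, and treat the generic case where the cyclic order on $\partial D'$ is $p, p', q, q'$; then $l_1 = [p, p'] \cdot [p', q]$ and $l_2^{-1} = [q', p]^{-1} \cdot [q, q']^{-1}$. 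The homotopy from $l_1$ to $l_2^{-1}$ is carried out in two phases: in phase~(i), inside $D_1$, I homotope the sub-arc $[p', q]$ to the other arc $\sigma \cdot [q, q']^{-1}$ of $\partial D_1$ joining $p'$ to $q$ (via the inductive hypothesis), keeping the prefix $[p, p']$ fixed; in phase~(ii), inside $D_2$, I homotope $[p, p'] \cdot \sigma$ to the other arc $[q', p]^{-1}$ of $\partial D_2$ joining $p$ to $q'$, keeping the suffix $[q, q']^{-1}$ fixed.

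Substituting the inductive bound on $pdias(D_j, D)$ into the length estimates for phases~(i) and (ii), expanding $|\partial D_j| = |\partial D' \cap \partial D_j| + |\sigma|$, using $|\sigma| \leq \sqrt{Area(D)}$ and the monotonicity $d_{D_j} \leq d_{D'}$, the arithmetic collapses to the inductive bound at level $k$. The additional $2\sqrt{Area(D)}$ per level accounts for the contribution of $|\sigma|$ to the moving piece (through $|\partial D_j|$) together with its appearance in the concatenated static piece of the intermediate path. Iterating $N$ times produces exactly the claimed $2\lceil \log_{4/3}((L - 4\sqrt{Area(D)})/(2\sqrt{Area(D)})) \rceil \sqrt{Area(D)}$ logarithmic term in the conclusion.

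The main obstacle is the case analysis for the cyclic arrangement of $\{p, q, p', q'\}$: while the two-phase strategy works in every configuration, arrangements like $p, q, p', q'$ in cyclic order (both $p', q'$ on the same $l_i$) and coincidences such as $p = p'$ each require a small relabeling with its own length bookkeeping. A secondary technicality is the monotonicity $d_{D_j} \leq d_{D'}$, which is addressed by the observations collected in Section~4. Part~B of the lemma is proved by the same argument verbatim, with every homotopy taking place in $D_0$ in place of $D$; this is valid because the hypothesis on short-boundary subdiscs and the inductive step both provide bounds for $pdias(\cdot, D_0)$, and the two-phase construction never leaves $D_0$.
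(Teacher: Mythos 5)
Your overall strategy is the same as the paper's: induct on the number of Besicovitch cuts needed to reach the short-boundary regime, quadrisect $\partial D'$, cut along the short minimizing geodesic $\sigma$ with $|\sigma|\leq\sqrt{Area(D')}$, verify the contraction $|\partial D_j|-4\sqrt{Area(D)}\leq \frac34\bigl(|\partial D'|-4\sqrt{Area(D)}\bigr)$, and run a two-phase homotopy through $D_1$ and then $D_2$ while holding the complementary subarc of $\partial D'$ fixed. That part is fine, as is your deferral of the configuration where both endpoints of $\sigma$ land on the same arc $l_i$ (the paper's Case~1).

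The genuine gap is your appeal to ``the monotonicity $d_{D_j}\leq d_{D'}$,'' which you claim is ``addressed by the observations collected in Section~4.'' It is not: $d_{D_j}$ is a maximum over $p\in\partial D_j$, and $\partial D_j$ contains the new arc $\sigma$, whose points were interior to $D'$ and hence not competitors in the definition of $d_{D'}$ (moreover the intrinsic distance in $D_j$ dominates that in $D'$). Lemma~\ref{diameter2} only yields $d_{D_j}\leq d_{D'}+|\sigma|$, and plugging that into your bookkeeping costs $2|\sigma|\leq 2\sqrt{Area(D)}$ through the term $2d_{D_j}$, on top of the $(1+\epsilon_0)|\sigma|$ contributed by $|\partial D_j|$ — roughly $3\sqrt{Area(D)}$ per level, which overshoots the budget and would turn the coefficient $2$ in front of the ceiling into $3$. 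The paper closes exactly this hole by proving the sharper estimate $d_{D_j}\leq d_{D'}+\frac12|\sigma|$, using that $\sigma$ is a minimizing geodesic (a minimizing geodesic in $D'$ from a point of $\partial D'\cap\partial D_j$ does not cross $\sigma$, and a point of $\sigma$ reaches an endpoint of $\sigma$ within $\frac12|\sigma|$); with the factor $\frac12$ the per-level cost is $(1+\epsilon_0)|\sigma|+2\cdot\frac12|\sigma|\approx 2\sqrt{Area(D)}$, which is precisely what the stated constant requires. You need to supply this argument (or an equivalent one); as written, your accounting of where the second $\sqrt{Area(D)}$ per level goes (``the concatenated static piece'') misidentifies the source of the cost, since the static subarc of $\partial D'$ is already absorbed into $(1+\epsilon_0)|\partial D'|$.
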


%{\bf Remark.} The lemma will be true if $686$ will be replaced by any
%non-negative constant. Yet we are going to apply this lemma
%it later in the situation when the value
%of the multiplicative constant at $\sqrt{Area}$ is $686$. Therefore we state
%the lemma only for this particular value.
\begin{proof}
A. First, we are going to prove A.
 For each subdisc $D' \subset D$ define

$$n(D')=\log_{\frac{4}{3}}(\frac
    {|\partial D'| - 4 \sqrt{Area(D)}}
    {2 \sqrt{Area(D)}})$$

For each $n \in \{0,..., \lceil n(D) \rceil \}$
(where $\lceil$ x $\rceil$ denotes the integer part of x+1)
and every subdisc $D' \subset D$ with
$n-1 < n(D') \leq n$ we will show that
$pdias(D',D) \leq (1+\epsilon_0)|\partial D'| + 2n \sqrt{Area(D)} + C \sqrt{Area(D)} + 2d_{D'}$

For $n=0$ we have $|\partial D'| \leq 6 \sqrt{Area(D)}$
so we are done by assumption in the statement of the theorem.

Suppose the conclusion is true for all integers smaller than $n$.
Let $p,q\in \partial D'$. Let $l_1$ and $l_2$ be two subarcs of $\partial D'$ from $p$ to $q$, $|l_2|\leq|l_1|$.
We will construct a homotopy of paths from $l_1$ to $l_2$ of length
$\leq (1+\epsilon_0)(|l_1|+|l_2|)+(C + 2n) \sqrt{Area(D')}+2 d_{D'}$.

Subdivide $l_1 \cup -l_2$ into four arcs $a_1$, $a_2$, $a_3$ and $a_4$ of
equal length  so that the center
of $a_2$ coincides with the center of $l_2$.
%and the center of $a_4$ coincides with the center of $l_1$.
By Besicovitch lemma there exists a curve $\alpha$ between opposite sides $a_1$ and $a_3$ or
$a_2$ and $a_4$ of length $\leq \sqrt{Area(D')}$.

We have two cases (see Fig. 6).

\begin{figure}[center]
\includegraphics[scale=0.5]{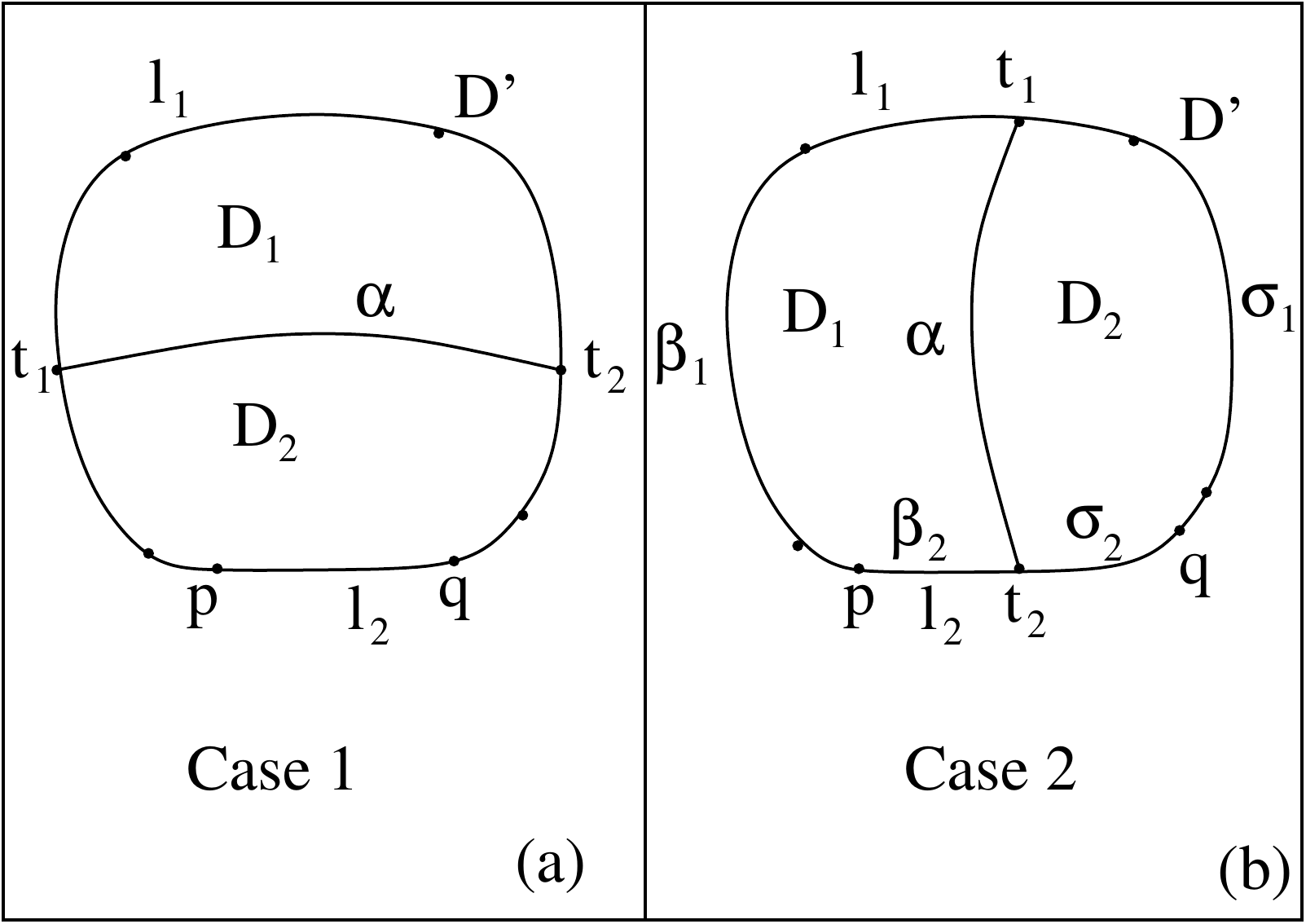}
\caption{}
\label{lnr6}
\end{figure}
%\realfig{Fig. 6}{lnr6.eps}{}{0.5\hsize}

\textbf{Case 1.} Both endpoints $t_1$ and $t_2$ of $\alpha$ belong to the
same arc $l_i$ ($i = 1$ or $2$).
Denote the arc of $l_i$ between $t_1$ and $t_2$ by $\beta$.
Note that $\frac{1}{4} (|l_1|+|l_2| )\leq |\beta| \leq \frac{3}{4} (|l_1|+|l_2|)$.
In particular, the disc $D_1$ bounded by $\alpha \cup -\beta$ has boundary of length
$\leq \frac{3}{4} |\partial D'| + \sqrt{Area(D')} \leq (4 + 2(\frac{4}{3})^{n-1}) \sqrt{Area(D)}$.
The induction assumption  implies that
$pdias(D_1,D) \leq (1+\epsilon_0)|\partial D_1| + (C + 2n -2) \sqrt{Area(D)} + 2d_{D_1}$.

We claim that $d_{D_1} \leq d_{D'} + \frac{1}{2} \sqrt{Area(D')}$. Indeed, let $y \in \partial D_1$.
If $y \in \partial D'$ then the geodesic from $y$ to $x$ does not cross $\alpha$ as both are minimizing
geodesics, hence the distance in $D_1$ $d_{D_1}(y,x)\leq d_{D'}$.
If $x \in \alpha$ then the triangle inequality implies that $d_{D_1}(y,x)\leq \frac{1}{2}|\alpha|+d_{D'} $.

Hence, for an arbitrarily small $\delta >0$ we can homotop $l_i$ to
$pt_1 \cup \alpha \cup t_2q$ through curves of length
$$ \leq |l_i \setminus \beta| + (1+\epsilon_0)|\partial D_1| + (C + 2n -2) \sqrt{Area(D)} + 2d_{D_1} + \delta$$
$$ \leq (1+\epsilon_0)|\partial D'|+ \sqrt{Area(D)} + (C + 2n -2) \sqrt{Area(D)} + 2d_{D} + \sqrt{Area(D)} + \delta .$$

Now consider the disc $D_2$ bounded by $pt_1 \cup \alpha \cup t_2q \cup - l_j$, where $l_j$ ($j \neq i$)
is the other arc. As in the case of $D_1$, we can homotop $pt_1 \cup \alpha \cup t_2q$ to $l_j$ through curves of length
$ \leq (1+\epsilon_0)|\partial D'| + 2n \sqrt{Area(D)} + C \sqrt{Area(D)} + 2d_{D'}.$

\textbf{Case 2.}
$t_1 \in l_1$, $t_2 \in l_2$.
%$t_1 \in a_2 \subset l_1$, $t_2 \in l_2 \subset a_4$.
%and $|l_2| \leq \frac{1}{4} |\partial D'|$.
Let $\beta_i$ denote the subarc of $l_i$ from $p$ to $t_i$ and $\sigma_i$ denote the subarc of $l_i$
from $t_i$ to $q$. Consider the subdisc $D_1 \subset D'$ bounded by $\beta_1 \cup \alpha \cup -\beta_2$.
As in Case 1 the inequality $|\partial D_1| \leq \frac{3}{4} |\partial D'| + \sqrt{Area(D')}$
combined with the induction assumption implies that
$pdias(D_1,D) \leq (1+\epsilon_0)|\partial D_1| + (C + 2n -2) \sqrt{Area(D)} + 2d_{D_1}$.
Using the estimate $d_{D_1} \leq d_{D'} + \frac{1}{2} \sqrt{Area(D')}$ we can homotop $l_1$ to
$\beta_2 \cup -\alpha \cup \sigma_1$ through curves of length
$$\leq (1+\epsilon_0)|\partial D'| + (2n + C) \sqrt{Area(D)} + 2d_{D'} +\delta. $$

In exactly the same way we homotop $\beta_2 \cup -\alpha \cup \sigma_1$ to $l_2$ using the inductive assumption for the other disc $D_2=D'\setminus D_1$.
%
%\textbf{Case 3.} $t_1 \in l_1$, $t_2 \in l_2$ and $\frac{1}{4} |\partial D'| < |l_2| \leq \frac{1}{2} |\partial D'|$.
%As before, let $\beta_i$ denote the subarc of $l_i$ from $p$ to $t_i$ and $\sigma_i$ denote the subarc of $l_i$
%from $t_i$ to $q$. We homotop $l_1$ to $\sigma_1 \cup -\alpha \cup \beta_2$ through curves of length
%$$\leq |\sigma_1|+  (|\beta_1| + |\beta_2| + |\alpha|) + (\C + 2n -2) \sqrt{Area(D')} +2 d_{D'} + \sqrt{Area(D')}+\delta$$
%$$\leq |\partial D'| + (2n + \C) \sqrt{Area(D)} + 2d_{D'} +\delta$$
%since $|\partial D'|\geq 8 \sqrt{Area(D')}$.
%
%Similiarly, we homotop $\beta_2 \cup -\alpha \cup \sigma_1$ to $l_2$ without exceeding the length bound.

This proves that $pdias(D) \leq (1+\epsilon_0)|\partial D| + 2 \lceil n(D) \rceil
 \sqrt{Area(D)} + C \sqrt{Area(D)} + 2d_{D}$.
%As $\lceil n(D)\rceil < \frac{\ln(\frac{|\partial D'| - 4 \sqrt{Area(D)}}
%    {2\sqrt{Area(D)}})}{\ln(\frac{4}{3})} + 1$
%we obtain
%$$pdias(D) \leq (1+\epsilon_0)|\partial D| +
%\frac{\ln(\frac{2}{3} (\frac{|\partial D|}{\sqrt{Area(D)}}-4))}
%{\ln(\frac{4}{3})} \sqrt{Area(D)}+ \C \sqrt{Area(D)}+ 2 d_D$$

This completes the proof of A. The proof of its
relative version B is almost identical to the proof of A.

\end{proof}

\begin{lemma} \label{small area}
\textbf{(Small Area)} Given a positive $\epsilon_0$ there exists a positive $\epsilon$, such that if $D \subset D_0$ with $Area(D) < \epsilon,$
then
$$pdias(D,D_0)\leq (1+\epsilon_0)|\partial D|,$$
when $|\partial D|\leq 6\sqrt\epsilon,$ and
$$ pdias(D,D_0) \leq (1+\epsilon_0)|\partial D|
%2 \frac{\ln(\frac{4}{3} (\frac{|\partial D'|}{\sqrt{Area(D')}}-4))}
%{(\ln\frac{4}{3}) \frac{|\partial D'|} {\sqrt{Area(D')}}})|\partial D'|
+2\lceil \log_{\frac{3}{4}}(\frac{|\partial D|-4\sqrt{Area(D)}}{2\sqrt{Area(D)}})  \rceil \sqrt{Area(D)}
%+ \C \sqrt{Area(D')}
+ 2d_{D},$$
when $|\partial D|>6\sqrt\epsilon.$
\end{lemma}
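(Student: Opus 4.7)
The lemma splits naturally along the threshold $|\partial D| = 6\sqrt{\epsilon}$. My plan is to first establish the short-boundary inequality (the case $|\partial D| \leq 6\sqrt{\epsilon}$) and then deduce the long-boundary inequality from it via Lemma~\ref{boundary}~B. The reduction is immediate: every sub-subdisc $D' \subset D$ with $|\partial D'| \leq 6\sqrt{Area(D)} \leq 6\sqrt{\epsilon}$ also has $Area(D') \leq Area(D) < \epsilon$, so the short-boundary case yields $pdias(D', D_0) \leq (1+\epsilon_0)|\partial D'|$, which is precisely the hypothesis of Lemma~\ref{boundary}~B with $C=0$. Substituting that hypothesis into the conclusion of Lemma~\ref{boundary}~B produces the second inequality of the present lemma.

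For the short-boundary inequality itself, the idea is to exploit that a subdisc of very small area and very short boundary is forced to be tiny in diameter, so it fits inside a geodesic ball of $D_0$ on which the Riemannian metric is arbitrarily close to Euclidean. In such a nearly flat region the required homotopy from $l_1$ to $l_2$ is obtained by linear interpolation in a Euclidean chart, which easily satisfies the desired bound.

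Concretely, the argument has three sub-steps. First, a diameter estimate: at any point $x \in D$, the geodesic ball $B_{D_0}(x, d_{D_0}(x,\partial D))$ is contained in $D$, and by a standard Euclidean area comparison at uniformly small scales on the compact smooth surface $D_0$ there is a constant $c>0$ with $Area(B_{D_0}(x,r)) \geq c r^2$, forcing $d_{D_0}(x,\partial D) \leq \sqrt{\epsilon/c}$. Combined with $d_{D_0}(x,y) \leq |\partial D|/2 \leq 3\sqrt{\epsilon}$ for $x,y \in \partial D$, this yields $diam(D) = O(\sqrt{\epsilon})$. Second, pick a uniform scale $\rho>0$ (again by compactness of $D_0$) below which every geodesic ball of $D_0$ admits a $\sqrt{1+\epsilon_0}$-bi-Lipschitz chart $\phi$ onto a Euclidean disc, and choose $\epsilon$ small enough that $diam(D) < \rho/2$, so $D$ lies in such a ball $B \subset D_0$. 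Third, given $p, q \in \partial D$ and the two arcs $l_1, l_2$ of $\partial D$ from $p$ to $q$, reparameterize each proportionally on $[0,1]$ and form the Euclidean linear homotopy
\[
H(s,t) = \phi^{-1}\bigl((1-s)\,\phi(l_1(t)) + s\,\phi(l_2(t))\bigr).
\]
A triangle inequality on $|(1-s)(\phi\circ l_1)'(t) + s(\phi\circ l_2)'(t)|$, together with the bi-Lipschitz factor applied once on each of pushforward and pullback, bounds the Riemannian length of $H(s,\cdot)$ by $(1+\epsilon_0)\max(|l_1|,|l_2|) \leq (1+\epsilon_0)|\partial D|$. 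Smallness of $diam(D)$ relative to $\rho$ guarantees that the interpolated paths remain inside $B$.

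The main obstacle is the first sub-step: obtaining a uniform diameter estimate $diam(D) = O(\sqrt{\epsilon})$ requires uniform control of both the local Euclidean area-comparison constant $c$ and the bi-Lipschitz-to-Euclidean scale $\rho$, both of which are extracted from compactness of $D_0$. The consequence is that $\epsilon$ depends on $D_0$ as well as on $\epsilon_0$, which is fine since this lemma is used only as the base case of an induction on area carried out within a fixed ambient disc.
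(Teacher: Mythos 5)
Your proposal is correct and follows essentially the same route as the paper: reduce the long-boundary case to the short-boundary case via Lemma~\ref{boundary}~B, then handle a subdisc of small area and short boundary by showing it sits in a nearly Euclidean (bi-Lipschitz, convex) chart where an elementary Euclidean homotopy between the two arcs has length $\leq \max(|l_1|,|l_2|)\leq|\partial D|$. The only differences are that you make explicit the diameter estimate the paper leaves implicit and you use the linear-interpolation homotopy where the paper cones off from $p$ by straight segments; both give the same bound.
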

 
\begin{proof}
Lemma \ref{boundary} B implies that in order to prove the
second inequality it is enough to find $\epsilon>0$ such
%the assertion
%of the lemma in the case, when $\vert\partial D'\vert\leq 6\sqrt{Area(D')}$
%and to prove that in the general case
that for all subdiscs $D'$
of $D$
with $|\partial D'|\leq 6 \sqrt{\epsilon}$
$$pdias(D', D_0)\leq (1+\epsilon_0)\vert\partial D'\vert
%\C\sqrt{Area(D')}
+2d_{D'}.$$
For all sufficiently small radii $r$ every ball $B_r(p) \subset D$ is bilipschitz
homeomorphic to a convex subset of the positive half-plane $\mathbb{R}^2_+$
with bilipshitz constant $L=1+O(r^2)$.

Hence, for a sufficiently small $\epsilon$ if $|\partial D'| \leq 6 \sqrt{\epsilon}$,
then $pdias(D',D_0) \leq (1+O(\epsilon))pdias(U,V)$, where $U \subset V \subset \mathbb{R}^2_+$,
$|\partial U| \leq (1+O(\epsilon))|\partial D'|$ and $V$ is convex.
We will show that $pdias(U,V) \leq |\partial U|$
thereby proving the result.

Let $p,q \in \partial U$ and $l_1:[0,1] \rightarrow V$, $l_2:[0,1] \rightarrow V$  
be two arcs of $\partial U$ from $p$ to $q$. Let $\alpha^i_t:[0,1] \rightarrow V$
denote a parametrized straight line from $p$ to $l_i(t)$.
We define a homotopy of paths from $l_1$ to $l_2$ as $\gamma_t = \alpha^1 _{2t} \cup l_1 |_{[2t,1]}$
for $0 \leq t \leq \frac{1}{2}$ and $\gamma_t = \alpha^2 _{2-2t} \cup l_2 |_{[2-2t,1]}$.
We have $|\gamma_t| \leq max(|l_1|,|l_2|) \leq |\partial U|.$
\par
Now we can choose $\epsilon>0$ so that
$(1+O(\epsilon))pdias(U,V)\leq (1+\epsilon_0)pdiast(U,V)$,
and the desired assertion follows.
\end{proof}
{\bf Remark.} Note that it is not difficult to prove the existence of $\epsilon>0$ such that
for each disc
$D\subset D_0$ of area $\leq\epsilon$
one has $pdias(D, D_0)\leq \vert \partial D\vert$. %+\epsilon_0$.
Yet the
proof is more complicated than the proof above. Moreover, this strengthening
of Lemma 2.3 does not lead to any improvements of our main estimates.
%$$ pdias(D) \leq (1+2 \frac{\ln(\frac{2}{3} (\frac{|\partial D|}{\sqrt{Area(D)}}-4))}
%{(\ln\frac{4}{3}) \frac{|\partial D|} {\sqrt{Area(D)}}})|\partial D|
%equally easy to prove
%a stronger inequality without the terms $686\sqrt{Area(D')}+2d_{D'}$ in its right hand side.
Therefore, we decided to state Lemma 2.3 only in its weaker form.
%that we are going to use in our proof
%of Theorem 1.5 presented below.

\section{Subdivision by short curves}

The following theorem was proven 
by P. Papasoglu in \cite{P}. For the sake of completeness we will present
a proof which is a slightly simplified version of the
proof given by Papasoglu.

\begin{theorem} \label{sphere subdivision}
\textbf{(Sphere Subdivision)}
Let $M=(S^2,g)$ be a Riemannian sphere. For every $\delta>0$
there exists a simple closed curve $\gamma$ subdividing $M$ into two discs $D_1$ and $D_2$, such that
$\frac{1}{4} Area(M) \leq Area(D_i) \leq \frac{3}{4} Area(M)$ and $|\gamma | \leq 2 \sqrt{3} \sqrt{Area(M)}+\delta$
\end{theorem}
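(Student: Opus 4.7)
The plan is to follow Papasoglu's approach: argue by contradiction against a near-minimizer, using Besicovitch's lemma (Theorem \ref{besicovitch}) to shorten any candidate dividing curve that exceeds the claimed length bound.

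First I would $C^0$-approximate the metric so that $M$ is smooth and variational minimizers are well-behaved. Consider the family $\mathcal{F}$ of smooth simple closed curves $\gamma\subset M$ that divide $M$ into two discs of area in $[\tfrac{A}{4},\tfrac{3A}{4}]$, where $A=Area(M)$. Set $\ell=\inf_{\gamma\in\mathcal{F}}|\gamma|$ and choose $\gamma\in\mathcal{F}$ with $|\gamma|<\ell+\tfrac{\delta}{2}$. It suffices to prove $\ell\leq 2\sqrt{3}\sqrt{A}$; suppose for contradiction that $|\gamma|>2\sqrt{3}\sqrt{A}+\delta$. The goal is to construct some $\gamma'\in\mathcal{F}$ with $|\gamma'|<|\gamma|$, contradicting the choice of $\gamma$.

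To produce $\gamma'$, let $D_1,D_2$ be the two discs bounded by $\gamma$, labelled so that $Area(D_1)\leq A/2$. Subdivide $\gamma$ into four consecutive arcs $a_1,a_2,a_3,a_4$ of equal length $|\gamma|/4$. Applying Besicovitch's lemma inside $D_1$ yields a simple arc $\alpha\subset D_1$ joining either $a_1$ to $a_3$ or $a_2$ to $a_4$ with $|\alpha|\leq\sqrt{Area(D_1)}\leq\sqrt{A/2}$. This arc cuts $D_1$ into subdiscs $D_1'$ and $D_1''$, and their boundaries are simple closed curves in $M$ whose lengths sum to $|\gamma|+2|\alpha|$. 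Hence the shorter of $\partial D_1'$ and $\partial D_1''$ has length at most $\tfrac{1}{2}|\gamma|+\sqrt{A/2}$, which is strictly less than $|\gamma|$ as soon as $|\gamma|>\sqrt{2A}$, and in particular under our standing assumption $|\gamma|>2\sqrt{3}\sqrt{A}$.

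The main obstacle is not the length estimate but the area constraint: one needs the shorter candidate curve to actually remain in $\mathcal{F}$, i.e.\ for its two complementary discs in $M$ to each have area in $[\tfrac{A}{4},\tfrac{3A}{4}]$. The precise constant $2\sqrt{3}$ (rather than the smaller $\sqrt{2}$ that would come from the length estimate alone) is calibrated by this requirement. To close the argument, I would arrange the Besicovitch cut so that the resulting area split is admissible---either by sliding the endpoints of $\alpha$ along $a_i$ and $a_{i+2}$ in a one-parameter family and applying the intermediate value theorem to achieve $Area(D_1')=A/4$ while retaining the bound $|\alpha|\leq\sqrt{A/2}$ via a second application of Besicovitch, or by combining a Besicovitch cut in $D_1$ with one in $D_2$ and resolving the two arcs into a single admissible simple closed curve. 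In either variant the length estimate $|\gamma'|\leq\tfrac{1}{2}|\gamma|+\sqrt{A/2}<|\gamma|$ is preserved, contradicting the near-minimality of $\gamma$ and thereby establishing the theorem.
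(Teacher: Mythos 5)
Your overall strategy (near-minimizer plus a Besicovitch cut, argue by contradiction) is exactly the paper's, but the execution has a genuine gap precisely at the point you flag as ``the main obstacle'': the admissibility of the new curve's area split. You apply Besicovitch to the \emph{smaller} disc $D_1$ (area $\leq A/2$, possibly as small as $A/4$), and then neither piece $D_1'$, $D_1''$ need have area $\geq A/4$, so neither boundary need lie in $\mathcal{F}$. The two fixes you sketch do not close this. The sliding/IVT argument fails because Besicovitch is a pure existence statement: it produces \emph{some} minimizing geodesic between two opposite arcs, with no control over where its endpoints land and no continuity as you vary the configuration (minimizing geodesics between arcs can jump), so there is no one-parameter family of short arcs whose enclosed area varies continuously from $0$ to $Area(D_1)$. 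The ``combine cuts in $D_1$ and $D_2$'' alternative is not worked out at all: it is unclear which union of the resulting four pieces is a disc of admissible area bounded by a single simple closed curve, and the length accounting changes.

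The paper's resolution is a one-line trick you are missing: apply Besicovitch to the \emph{larger} disc $D$, the one with $Area(D)\geq A/2$. Then the arc $\alpha$ (of length $\leq \sqrt{Area(D)}\leq \frac{\sqrt3}{2}\sqrt{A}$, since $Area(D)\leq \frac34 A$) splits $D$ into two pieces whose areas sum to at least $A/2$, so one piece has area $\geq A/4$; and its complement in $M$ contains the \emph{other} original disc, which also has area $\geq A/4$. Hence that piece's boundary is automatically admissible. Note also that one must then use the length bound valid for \emph{both} pieces, namely $\frac34|\gamma|+|\alpha|$ (each side of the cut contains at least one full quarter-arc of $\gamma$, since $\alpha$ joins opposite arcs), rather than the bound $\frac12|\gamma|+|\alpha|$ for the shorter piece, because the admissible piece need not be the shorter one. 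This is exactly where the constant $2\sqrt3$ comes from: $\frac14|\gamma|\leq \frac{\sqrt3}{2}\sqrt{A}+\epsilon$. Finally, you should also verify that $\mathcal{F}$ is nonempty before taking the infimum (the paper constructs an element from a level set of a Morse function); as written your argument assumes this.
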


\begin{proof}
Consider the set $S$ of all simple closed curves on M dividing M into two subdiscs each of area $\geq {1\over 4} Area(M)$.
To see that this set is non-empty one can take a level set of a Morse function
on $M$ and connect its components by geodesics. From arcs of these geodesics one can obtain
paths between components of the level set that can be made disjoint by a small perturbation.
Traversing each of the connecting paths twice one obtains a closed curve that becomes simple after a small perturbation.

Choose a positive $\epsilon$.
Let $\gamma \in S$ be a curve that is $\epsilon -$minimal. (In other words,
its length is greater than or equal to $\inf_{\tau\in S}\vert\tau\vert+\epsilon$.)
Let $D$ be one of the
two discs forming $M \setminus \gamma$ that has
area $\geq  \frac{1}{2} Area(M)$.
If we subdivide $\gamma$ into four equal arcs then by Besicovitch Lemma there is
a curve $\alpha$ connecting two opposite arcs of length $\leq {\sqrt{3}\over 2} \sqrt{A}$. Observe that
$\alpha$ subdivides $D$ into two discs, and at least  one of these discs has
area $\geq {1\over 4} Area(M)$.
Hence, the boundary of this disc is an element of $S$ of length
$\leq {3\over 4} |\gamma| + |\alpha|$. By $\epsilon -$minimality of $\gamma$ we must have
$$|\gamma| \leq {3\over 4} |\gamma| + {\sqrt{3}\over 2} \sqrt{A} + \epsilon.$$
Therefore, $|\gamma| \leq 2 \sqrt{3}  \sqrt{A} + 4\epsilon$.
%Choosing an $\epsilon_n-$minimal
%$\gamma_n \in S$ for $\epsilon_n \rightarrow 0$ we obtain a sequence of closed curves,
%which by compactness contains a subsequence converging to a curve satisfying the desired inequality.
\end{proof}

%We use one of the ideas that we learned from \cite{BS} to construct our homotopy by applying repeated subdivisions by short curves.
Our next result is an analog of the previous result for $2$-discs.

\begin{proposition} \label{disc subdivision}
\textbf{(Disc Subdivision Lemma)}
Let $D$ be a Riemannian 2-disc. For any $\delta>0$
there exists a subdisc $\overline{D} \subset D$ satisfying

(1) $\dfrac{1}{4} Area(D) - \delta^2 \leq Area(\overline{D}) \leq \dfrac{3}{4} Area(D) + \delta^2$

(2) $|\partial \overline{D} \setminus \partial D | \leq 2 \sqrt{3} \sqrt{Area(D)} + \delta$

\end{proposition}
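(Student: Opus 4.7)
The plan is to adapt the proof of Theorem~\ref{sphere subdivision} to the disc setting. Write $A = \text{Area}(D)$, and let $\mathcal{F}_\delta$ denote the family of subdiscs $\overline{D} \subset D$ whose area lies in $[\tfrac{A}{4} - \delta^2,\, \tfrac{3A}{4} + \delta^2]$. Non-emptiness of $\mathcal{F}_\delta$ is established exactly as in the proof of Theorem~\ref{sphere subdivision}, by cutting along a suitable level set of a generic Morse function on $D$ (and connecting its components if necessary). For a small parameter $\epsilon > 0$ to be chosen at the end, pick $\overline{D} \in \mathcal{F}_\delta$ whose interior boundary $\gamma := \partial \overline{D} \setminus \partial D$ has length within $\epsilon$ of $L := \inf_{\overline{D}' \in \mathcal{F}_\delta} |\partial \overline{D}' \setminus \partial D|$. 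After possibly swapping $\overline{D}$ with $D \setminus \overline{D}$ (which also lies in $\mathcal{F}_\delta$), assume $\text{Area}(\overline{D}) \in [\tfrac{A}{2},\, \tfrac{3A}{4} + \delta^2]$.

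The curve $\gamma$ is either (A) a simple closed curve in the interior of $D$, so that $\partial \overline{D} = \gamma$; or (B) a simple arc connecting two points $p, q \in \partial D$, so that $\partial \overline{D} = \gamma \cup \mu$ for an arc $\mu \subset \partial D$. In both cases, subdivide $\gamma$ into four consecutive arcs $\gamma_1, \gamma_2, \gamma_3, \gamma_4$ of equal length, and apply the Besicovitch Lemma (Theorem~\ref{besicovitch}) to $\overline{D}$ using the four-arc partition of $\partial \overline{D}$ given by $(\gamma_1, \gamma_2, \gamma_3, \gamma_4)$ in Case (A) and by $(\gamma_1, \gamma_2, \gamma_3, \gamma_4 \cup \mu)$ in Case (B). Combining the conclusion of Besicovitch with $\text{Area}(\overline{D}) \leq \tfrac{3A}{4} + \delta^2$ produces a minimizing geodesic $\alpha \subset \overline{D}$ joining two opposite arcs, with
$$|\alpha| \leq \sqrt{\text{Area}(\overline{D})} \leq \sqrt{\tfrac{3A}{4} + \delta^2}.$$

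The arc $\alpha$ separates $\overline{D}$ into two subdiscs $P_1, P_2$, and the key technical claim is that in every subcase
$$|\partial P_i \setminus \partial D| \leq \tfrac{3}{4}|\gamma| + |\alpha|, \qquad i = 1, 2.$$
This is verified by the same bookkeeping as in the proof of Theorem~\ref{sphere subdivision}: each $P_i$ inherits at most three of the four equal quarters of $\gamma$ together with $\alpha$, and any portion of $\mu$ appearing in $\partial P_i$ lies on $\partial D$ and therefore contributes zero to $|\partial P_i \setminus \partial D|$. Since $\text{Area}(\overline{D}) \geq A/2$, at least one of $P_1, P_2$ has area in $[\tfrac{A}{4},\, \tfrac{3A}{4} + \delta^2]$ and belongs to $\mathcal{F}_\delta$. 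The $\epsilon$-minimality of $\overline{D}$ then gives $|\gamma| - \epsilon \leq L \leq |\partial P_i \setminus \partial D| \leq \tfrac{3}{4}|\gamma| + |\alpha|$, hence $|\gamma| \leq 4|\alpha| + 4\epsilon \leq 2\sqrt{3A + 4\delta^2} + 4\epsilon$. Choosing $\epsilon$ small enough (and, if $\delta$ is not small compared to $\sqrt{A}$, running the argument with a smaller $\delta' < \delta$ and using $\mathcal{F}_{\delta'} \subset \mathcal{F}_\delta$) yields $|\gamma| \leq 2\sqrt{3}\sqrt{A} + \delta$, establishing both (1) and (2).

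The main obstacle is the case analysis in Case (B), specifically the subcase in which Besicovitch's short arc $\alpha$ terminates on the boundary portion $\mu$ of the merged arc $\gamma_4 \cup \mu$ rather than on $\gamma_4$ itself. In that subcase one must inspect the two pieces $P_1, P_2$ directly, tracking which quarters of $\gamma$ and which subarcs of $\mu$ each contains, and verify that the coefficient $\tfrac{3}{4}$ in front of $|\gamma|$ is preserved in the bound on $|\partial P_i \setminus \partial D|$. All other subcases follow the sphere proof verbatim.
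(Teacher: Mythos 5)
Your overall strategy (run the Papasoglu-style $\epsilon$-minimization directly on the disc) is reasonable, but it has a genuine gap at the step ``after possibly swapping $\overline{D}$ with $D\setminus\overline{D}$ (which also lies in $\mathcal{F}_\delta$), assume $Area(\overline{D})\geq \frac{A}{2}$.'' This swap is only legitimate in your Case (B). In Case (A), where $\gamma=\partial\overline{D}$ is a simple closed curve in the interior of $D$, the complement $D\setminus\overline{D}$ is an \emph{annulus}, not a subdisc, so it does not belong to $\mathcal{F}_\delta$ and cannot replace $\overline{D}$. If the $\epsilon$-minimizer happens to be of type (A) with $Area(\overline{D})<\frac{A}{2}$ (say, area close to $\frac{A}{4}$), then after the Besicovitch cut both pieces $P_1,P_2$ can have area below $\frac{A}{4}-\delta^2$, neither lies in $\mathcal{F}_\delta$, and the $\epsilon$-minimality of $\overline{D}$ yields no inequality at all. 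This is not a cosmetic issue: the whole point of the competition argument is that the cut produces a new competitor, and on a disc (unlike on a sphere, where both complementary components of a simple closed curve are discs and the ``take the larger side'' step is always available) this fails. A related, smaller imprecision is your dichotomy (A)/(B): for a general subdisc $\overline{D}\subset D$ the relative boundary $\partial\overline{D}\setminus\partial D$ may consist of several arcs, so you must either restrict the competitor class to subdiscs with connected relative boundary (and then check non-emptiness and that the class is preserved by the cutting operation) or handle multi-component boundaries.

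The paper avoids both problems by a different route: it glues a cap of area $\leq\delta^2$ onto $\partial D$ to form a sphere $M$, applies the sphere subdivision theorem (where the swap is always legal) to get a short separating curve $\gamma$, and then, in the case $\gamma\cap\partial D\neq\emptyset$, post-processes: it looks at the connected components of $D_i\cap D$ and, if none has large enough area, erases arcs of $\gamma\cap D$ one at a time, merging adjacent components until one of area $\geq\frac{1}{4}Area(D)-\delta^2$ appears (its relative boundary is a subset of $\gamma$, so the length bound survives). To salvage your argument you would need either this reduction to the sphere or some substitute mechanism for the Case (A), small-area configuration; as written, the proof does not go through.
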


\begin{proof}
Without any loss of generality we can assume
$\delta\leq{\sqrt{Area(D)}\over 10}$.
Attach a disc $D'$ of area $\leq\delta^2$ to the boundary of $D$ so that $M= D' \cup D$
is a sphere of area $\leq Area(D) + \delta^2$. We apply Theorem \ref{sphere subdivision}
to $M$ to obtain a close curve $\gamma$ of length $\leq 2\sqrt{3}\sqrt{Area(D)}+\delta$ that divides $D$ into two subdiscs $D_1$ and $D_2$ with areas in the interval
$[{1\over 4}Area(D)-\delta^2, {3\over 4} Area(D)+\delta^2]$. Without any loss
of generality we can assume that either $\gamma$ does not intersect
$\vert\partial D\vert$ or intersects it transversally.
%By making $Area(D')$ small enough we ensure that $|\gamma \cap D| \leq 2 \sqrt{3} \sqrt{Area(D)} +\delta$.
(Note that the idea of attaching a disc
of a very small area to the boundary of $D$ and applying  Theorem 3.1 appears
in \cite{BS}.)

If $\gamma \cap \partial D$ is empty then $D_i \subset D$ for one of $D_i$'s and setting $\overline{D} = D_i$
we obtain the desired result.

A more difficult case arises when $\gamma \cap \partial D \neq \oslash$.
%We perturb $\gamma$ so that it intersects $\partial D$ transverslly.
For each $i=1,2$ $D_i \cap D$ may have several connected components.
Those components, $D^j$, are subdiscs of $D$ of area $\leq {3\over 4}Area(D)+\delta^2$. If the area of one of them is $\geq {1\over 4}Area(D)-\delta$, then
we can choose this subdisc as $\overline{D}$, and we are done. Otherwise,
we can start erasing connected components of $\gamma\bigcap D$ one by one.
When we erase a connected component of $\gamma\bigcap D$, the two subdiscs adjacent
to the erased arc merge into a larger subdisc of area $\leq {1\over 2}Area(A)-2\delta^2$. We continue
this process until we obtain a new subdisc of area $\geq {1\over 4}Area(A)-\delta^2$, and choose this subdisc as $\overline{D}$.
%We will show that there exists one component that sarisfies (1)-(2).
%
%By Lemma \ref{2-gon} (that we prove below) for each $i=1,2$ there exists an arc $a \subset \gamma$ and
%an arc $b \subset \partial D$, such that $a \cup b$ bounds a disc $D_{a \cup b} ^i \subset D_i$ with
%$D_{a \cup b} ^i \cap D$ connected and homeomorphic to a disc.
%
%Renaming if necessary let $Area(D_1) \geq Area(D_2)$. Let $D_{a \cup b} ^1$ be as defined above
%and $B=D_{a \cup b} ^1 \cap D$.
%
%If $Area(B) \geq \frac{1}{4} Area(D) - \delta$, then we define $\overline{D}=B$.
%Clearly it saisfies (1)-(2).
%
%If $Area(B) \leq \frac{1}{4} Area(D) - \delta$ then we define a new curve $\gamma'$.
%by removing the arc $a \subset \gamma$  and
%connecting the two endpoins by $b \subset \partial D$.
%The new curve $\gamma'$ satisfies $|\gamma' \cap D| \leq |\gamma|$ and divides $M$ into two discs $D_1'$ and $D_2'$
%satisfying $\dfrac{1}{4} Area(D) - \delta \leq Area(D_i' \cap D) \leq \dfrac{3}{4} Area(D) + \delta$.
%Notice tht the number of connected components of $D_1' \cap D$ is less than the
%number of connected components of $D_1 \cap D$.
%We continue reducing the number of connected components until either one of the subsets becomes connected or we encounter the first possibility described above (nameley, we obtain a disc of area $\geq {1\over 4}Area(D)-\delta$).
\end{proof}
{\bf Remark.} Theorem 3.1 naturally leads
to the following
question: For which values of $a\in ({1\over 4}, {1\over 2})$ does there exist
a constant $c(a)$ such that every Riemannian $2$-sphere of area $A$
can be subdivided
into two discs of area $> aA$ by a simple closed curve of length $\leq c(a)\sqrt{A}$? It is not difficult to see that $a$ cannot exceed ${1\over 3}$. Indeed,
consider a three-legged starfish made out of three congruent halves
of very thin ellipsoids of revolution. The length of each of these halves
of ellipsoids of revolution is much larger than the square root of its area,
and the area of its subset formed by all points
at the distance $<const \sqrt{A}$ from the line,
where it meets the other two pieces, is infinitesimally
small. Using these remarks
it is not difficult to see that no matter how
one places a simple closed curve of length $\leq const \sqrt{A}$ on the
three-legged starfish, the area
of the smaller one of the two discs bounded by this curve can be at most $({1\over 3}+o(1))A$. On the other hand, one of the authors (E.L.) recently proved
the existence of an absolute constant $C$
such that each Riemannian $2$-sphere of area $A$ can be
subdivided into two discs of area $\geq {1\over 3}A$ by a simple closed
curve of length $\leq C\sqrt{A}$ ([L2]).

\section{Bounds for $d_{D'}$.}

We will also need the following lemmae relating $d_D$ with $d_{D'}$ for
a subdisc $D'$ of $D$.

\begin{lemma} \label{diameter}
Let $D'$ be a subdisc of $D$, $p \in \partial D$ and $p' \in \partial D'$
be two points connected by a minimizing geodesic $\alpha$ in $D$. Then $d_{D'}+\vert\alpha\vert \leq d_D + |\partial D'|$
\end{lemma}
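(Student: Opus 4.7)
The plan is to bound $d_{D'}=\max_{q\in\partial D'}\max_{x\in D'}dist_{D'}(q,x)$ by constructing, for arbitrary $q\in\partial D'$ and $x\in D'$, an explicit path in $\overline{D'}$ from $q$ to $x$ of length at most $d_D+|\partial D'|-|\alpha|$; taking $(q,x)$ that realizes $d_{D'}$ then yields the lemma. The path will be a concatenation of a short arc of $\partial D'$ with the tail of a minimizing geodesic of $D$ running from $p$ to $x$.

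More precisely, I would fix $q\in\partial D'$ and $x\in D'$, and choose a minimizing geodesic $\gamma$ in $D$ from $p$ to $x$; its length satisfies $|\gamma|=dist_D(p,x)\leq d_p(D)\leq d_D$ because $p\in\partial D$. Let $s^{*}$ be the last point along $\gamma$ (parametrized from $p$ to $x$) that lies on $\partial D'$, so the sub-arc $\gamma_{\mathrm{tail}}$ of $\gamma$ from $s^{*}$ to $x$ is contained in $\overline{D'}$; its length is $|\gamma|-dist_D(p,s^{*})$.

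The decisive estimate is the triangle inequality in $D$ applied to the three points $p$, $s^{*}$, $p'$:
$$|\alpha|=dist_D(p,p')\leq dist_D(p,s^{*})+dist_D(s^{*},p').$$
Since $s^{*}$ and $p'$ both lie on $\partial D'$, the shorter arc of $\partial D'$ joining them is a curve in $D$ of length at most $|\partial D'|/2$, which gives $dist_D(s^{*},p')\leq|\partial D'|/2$. Hence $dist_D(p,s^{*})\geq|\alpha|-|\partial D'|/2$, and therefore $|\gamma_{\mathrm{tail}}|\leq d_D-|\alpha|+|\partial D'|/2$. Prepending the shorter arc of $\partial D'$ from $q$ to $s^{*}$ (of length at most $|\partial D'|/2$) produces the desired path in $\overline{D'}$ of total length at most $d_D+|\partial D'|-|\alpha|$, proving $dist_{D'}(q,x)\leq d_D+|\partial D'|-|\alpha|$.

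The main obstacle is the degenerate case in which $\gamma$ does not meet $\partial D'$ at all, which forces $p\in\overline{D'}$. If $p\in\partial D'$, the shorter arc of $\partial D'$ from $p$ to $p'$ is a competitor to $\alpha$, so $|\alpha|\leq|\partial D'|/2$, and the path $q\to p$ along $\partial D'$ followed by $\gamma\subset\overline{D'}$ has length at most $d_D+|\partial D'|/2\leq d_D+|\partial D'|-|\alpha|$. The remaining exotic possibility, where $p$ lies in the topological interior of $D'$ as a subspace of $D$, can be reduced to the previous cases by a small inward perturbation of $\partial D'$ pushing $p$ off $\overline{D'}$ and taking a limit; it is not the core content of the lemma.
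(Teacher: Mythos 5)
Your proof is correct and takes essentially the same route as the paper's: both arguments split a minimizing geodesic from $p$ to $x\in D'$ where it meets $\partial D'$ and pay at most $\tfrac{1}{2}|\partial D'|$ twice for travel along the boundary (the paper uses the first intersection point to lower-bound via $|\alpha|$ and the last to reach $x$, while you use only the last point $s^{*}$ for both, which works just as well). The ``exotic possibility'' you worry about cannot actually occur --- a point of $\partial D$ contained in a closed subdisc $D'\subset D$ has no Euclidean neighbourhood in $D$ and hence must lie on $\partial D'$ --- so your case analysis is already complete without any perturbation.
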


\begin{proof} 
Let $\beta$ be a minimizing geodesic in $D'$ from a point on the boundary of $D'$ to a point $x \in D'$,
s.t. $|\beta|= d_{D'}$ (It exists by compactness). Let $\gamma$ be a minimizing geodesic from $p$ to $x$.
Denote by $\gamma_1$ the arc of $\gamma$ from $p$ to a point $u_2$ where it first intersects $\partial D'$
and by $\gamma_2$ the arc from a point $u_1$ where it last intersects $\partial D'$ to $x$ (see Fig. 7).

\begin{figure}[center]
\includegraphics[scale=0.5]{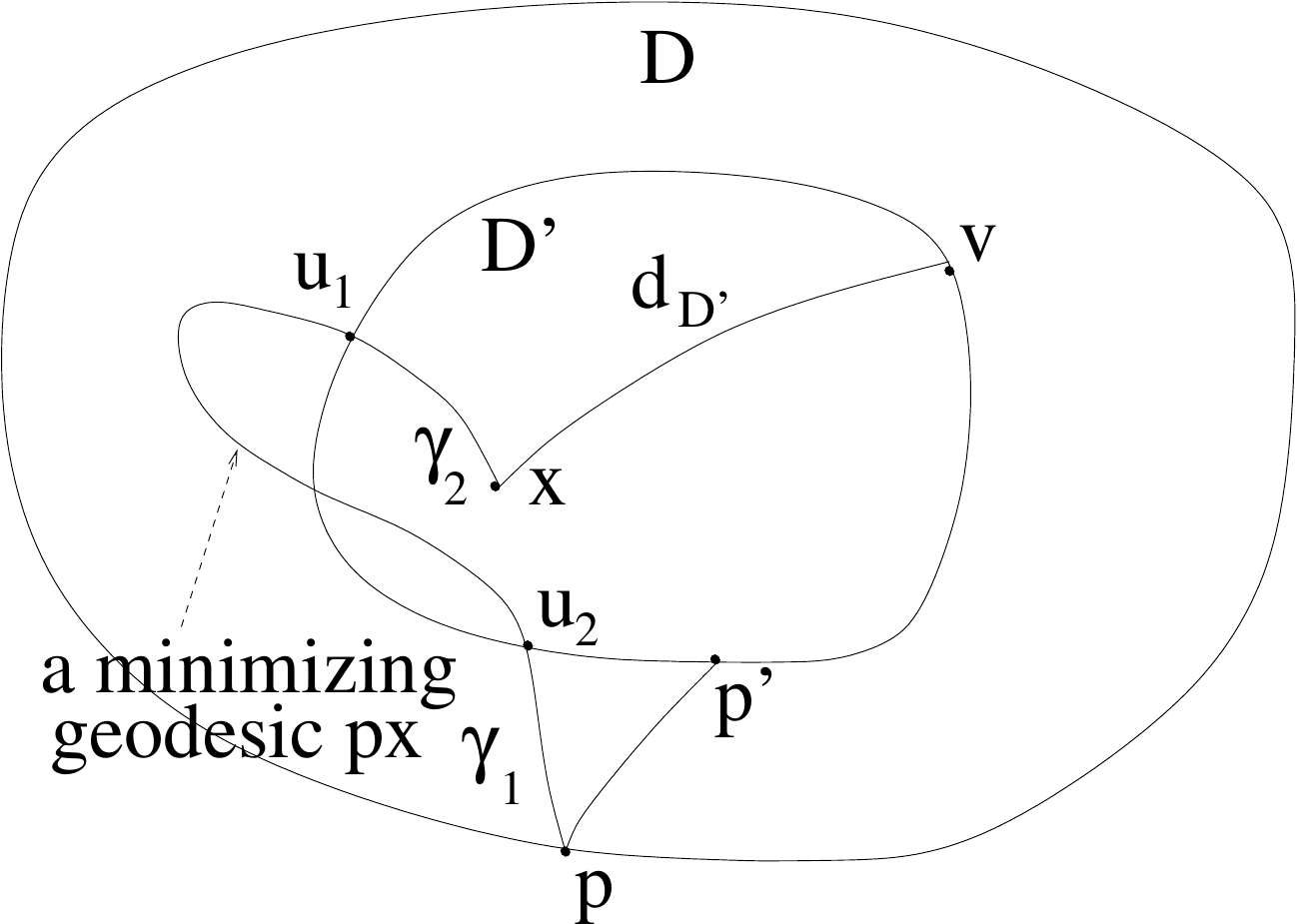}
\caption{}
\label{lnr7}
\end{figure}
%\realfig{Fig. 7}{lnr7.eps}{}{0.5\hsize}

Then by triangle inequality
$$|\alpha| \leq |\gamma_1| +  \frac{1}{2} |\partial D'|,$$
$$|\beta| \leq |\gamma_2| + \frac{1}{2} |\partial D'|.$$
Hence,
$d_{D'}+|\alpha| \leq d_D + |\partial D'|.$
\end{proof}

\begin{lemma} \label{diameter2}
Suppose $D' \subset D$ is a subdisc such that $\partial D' \cap \partial D\not =\empty$.
Then $d_{D'} \leq d_D + |\partial D' \setminus \partial D|$.
\end{lemma}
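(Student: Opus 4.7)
The plan is to show that for any $p \in \partial D'$ and any $x \in D'$ there is a path in $\overline{D'}$ from $p$ to $x$ of length at most $d_D + |\partial D' \setminus \partial D|$; taking the supremum over such pairs will yield $d_{D'} \leq d_D + |\partial D' \setminus \partial D|$. I will split the argument according to whether $p$ lies in $\partial D' \cap \partial D$ or in $\partial D' \setminus \partial D$.

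Case 1 ($p \in \partial D' \cap \partial D$). Since $p \in \partial D$, a minimizing geodesic $\gamma$ in $D$ from $p$ to $x$ has length at most $d_p(D) \leq d_D$. If $\gamma$ stays in $\overline{D'}$ we are done; otherwise it has excursions into $D \setminus D'$. A local observation shows that near the interior of any arc of $\partial D' \cap \partial D$ the subdisc $D'$ coincides with $D$, so every crossing of $\partial D'$ by $\gamma$ occurs on $\partial D' \setminus \partial D$. For each connected component $U$ of $D \setminus D'$ visited by $\gamma$, the arc $\alpha_U := \partial U \cap \partial D'$ lies in $\partial D' \setminus \partial D$ and contains all of $\gamma$'s crossings with $\partial U$. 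I will replace the entire temporal span of $\gamma$ in $U$, that is the subarc of $\gamma$ from its first entry into $U$ to its last exit from $U$ (which may enclose several distinct excursions together with intervening $D'$-pieces), by a single subarc of $\alpha_U$ connecting the corresponding entry and exit points. This subarc lies in $\overline{D'}$ and has length at most $|\alpha_U|$. Since the $\alpha_U$'s for distinct components are pairwise disjoint subsets of $\partial D' \setminus \partial D$, the total added length is at most $|\partial D' \setminus \partial D|$, so the resulting path in $\overline{D'}$ from $p$ to $x$ has length at most $d_D + |\partial D' \setminus \partial D|$.

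Case 2 ($p \in \partial D' \setminus \partial D$). Then $p$ lies on an arc $J \subset \partial D' \setminus \partial D$ whose endpoints $q_1, q_2$ belong to $\partial D' \cap \partial D$, and $J = \alpha_{U_J}$ for the unique component $U_J$ of $D \setminus D'$ adjacent to $J$. Take a minimizing geodesic $\gamma$ in $D$ from $q_1$ to $x$, of length at most $d_D$. If $\gamma$ does not visit $U_J$, I prepend the subarc of $J$ from $p$ to $q_1$ (length at most $|J|$) to the Case 1 construction applied at $q_1$; the latter only needs detours on $\alpha_U$ for $U \neq U_J$ and so contributes at most $d_D + (|\partial D' \setminus \partial D| - |J|)$, making the total $d_D + |\partial D' \setminus \partial D|$. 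If $\gamma$ does visit $U_J$, let $b \in J$ be the last point at which $\gamma$ exits $U_J$; I bypass both the initial portion of $\gamma$ and the whole span of $\gamma$ in $U_J$ by travelling from $p$ directly to $b$ along $J$ (length at most $|J|$), then following the tail of $\gamma$ from $b$ to $x$ with Case 1-style span-replacements for any remaining visits to components $U \neq U_J$. The bypass contributes at most $|J|$, the tail contributes at most $d_D$, and the remaining detours at most $|\partial D' \setminus \partial D| - |J|$, so the total is again at most $d_D + |\partial D' \setminus \partial D|$.

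The main obstacle is the bookkeeping in Case 2 to avoid double-counting the portion of $\partial D' \setminus \partial D$ that is simultaneously used for the initial $p$-to-$q_1$ travel and for the detour on $\alpha_{U_J} = J$; the bypass directly to the last-exit point $b$ is precisely what ensures the total budget $|\partial D' \setminus \partial D|$ is not exceeded. Establishing the span-replacement in Case 1 is itself clean once one notes that all of $\gamma$'s crossings of a given component $U$ lie on the single arc $\alpha_U$, so replacing the entire temporal span handles nested or interleaved excursions uniformly with extra length at most $|\alpha_U|$.
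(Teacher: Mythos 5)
Your proof is correct and follows essentially the same route as the paper: reroute the excursions of a minimizing $D$-geodesic outside $D'$ along subarcs of $\partial D'\setminus\partial D$ (one per arc/component, so the total detour is at most $|\partial D'\setminus\partial D|$), with a separate argument when the basepoint $p$ lies on one of those arcs. The only cosmetic difference is in that second case: the paper keeps the geodesic based at $p$ and combines the triangle inequality with the disjointness of the extra subarc $a'$ from the rerouted curve, whereas you base the geodesic at an endpoint $q_1\in\partial D$ of the arc $J$ containing $p$ and prepend a subarc of $J$ --- both devices serve the identical purpose of not charging the arc through $p$ twice.
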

 
\begin{proof}

Note that $\partial D' \setminus \partial D$ is a colection of
countably many open arcs with endpoints on $\partial D$.

Let $\beta$ be a minimizing geodesic in $D'$ from
a point $p \in \partial D'$ to a point $x \in D'$,
such that $|\beta|= d_{D'}$. Let $\alpha$ be a minimizing
geodesic in $D$ from $p$ to $x$ (see Fig. 8).

\begin{figure}[center]
\includegraphics[scale=0.5]{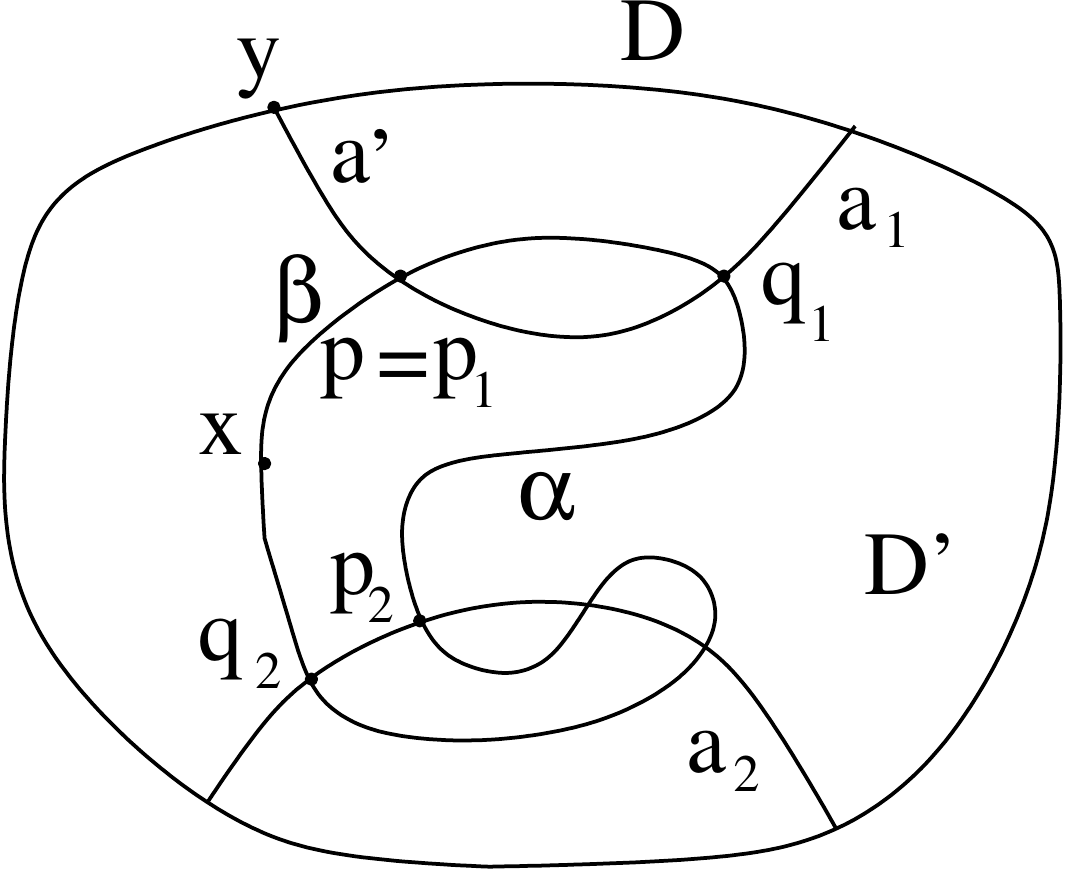}
\caption{}
\label{lnr8}
\end{figure}
%\realfig{Fig. 8}{lnr8.eps}{}{0.5\hsize}

We will construct a new curve $\alpha'$ which agrees with $\alpha$
on the interior of $D'$ and lies entirely in the closed disc $D'$.
If $\alpha$ does not intersect any arcs of $\partial D' \setminus \partial D$
we set $\alpha' = \alpha$. Otherwise, let $a_1$ denote the first arc of
$\partial D' \setminus \partial D$ intersected by $\alpha$.
%after $\alpha$ leaves $p$.
Let $p_1$ (resp. $q_1$) denote the point where $\alpha$ intersects
$a_1$ for the first (resp. last) time. (If $p\in \partial D'\setminus\partial D$, then
$p_1=p$.) We replace the arc of $\alpha$
from $p_1$ to $q_1$ with the subarc of $a_1$. We call this new curve
$\alpha_1$. We find the next (after $a_1$) arc
$a_2 \subset \partial D' \setminus \partial D$ that $\alpha_1$ intersects
and replace a subarc of $\alpha_1$ with a subarc of $a_2$.
We continue this process inductively until we obtain a curve $\alpha'=\alpha_n$ that
lies in $D'$.

Note that $|\beta|\leq |\alpha'| \leq |\alpha|+ |\partial D' \setminus \partial D|$.
Hence, if $p \in \partial D$, then $|\alpha| \leq d_D$ and we are done.

If $p$ belongs to an arc $a \subset \partial D' \setminus \partial D$, then
let $a'$ be a subarc of $a$ connecting $p$ to a point $y$ of $\partial D$,
such that $a' \cap \alpha' = \{ p \}$. (Note, that in this case $p=p_1$.) Then
$\vert\alpha\vert\leq dist_D(x,y)+\vert a'\vert\leq d_D+\vert a'\vert$, and
$d_{D'}=\vert\beta\vert\leq\vert\alpha'\vert\leq\vert\alpha\vert+\vert\partial D'\setminus(\partial D\bigcup a')\vert\leq \vert\alpha\vert+\vert\partial D'\setminus \partial D\vert-\vert a'\vert\leq d_D+\vert\partial D'\setminus \partial D\vert.$
\end{proof}

\section{Proof of Theorem 1.1 A-C.}

We are now ready to prove statements A to C of Theorem \ref{main'}.

Let $\epsilon_0$ be an arbitrary positive number less than $0.001$.
Fix an $\epsilon=\epsilon(\epsilon_0) > 0$ small enough for Lemma \ref{small area}.

Let $N$ be an integer defined by
$$(\frac{4}{3})^{N-1} \epsilon \leq Area(D) < (\frac{4}{3})^N \epsilon $$

Let $\delta < \min\{\epsilon,(\frac{4}{3})^N \epsilon - Area(D)\}$.
For each $n\in \{0,1,...,N \}$
and for every subdisc $D' \subset D$ with $(\frac{4}{3})^{n-1} \epsilon  - \frac{\delta}{2^{N-n+1}}
\leq Area(D') < (\frac{4}{3})^n \epsilon  - \frac{\delta}{2^{N-n}}$ we will show

A. If  $|\partial D'| \leq 2 \sqrt{3} \sqrt{Area(D)}$ then
$$ pdias(D',D) \leq |\partial D'| + \D \sqrt{Area(D')} + 2d_{D'}.$$

B. If $|\partial D'| \leq 6 \sqrt{Area(D)}$ then
$$ pdias(D',D) \leq |\partial D'| + \C \sqrt{Area(D')} + 2d_{D'}.$$

C. If  $|\partial D'| > 6 \sqrt{Area(D)}$ then

$$ pdias(D',D) \leq (1+\epsilon_0) |\partial D'|+
2\lceil \log_{\frac{4}{3}}(\frac{|\partial D'|-4\sqrt{Area(D')}}{2\sqrt{Area(D')}})  \rceil \sqrt{Area(D')}
+ \C \sqrt{Area(D')} + 2d_{D'}$$
$$\leq 2|\partial D'|+\C \sqrt{Area(D')}+2d_{D'}.$$

%$$pdias(D',D) \leq (1+\epsilon_0+2 \frac{\ln(\frac{2}{3} (\frac{|\partial D|}{\sqrt{Area(D)}}-4))}
%{\ln(\frac{4}{3}) \frac{|\partial D|} {\sqrt{Area(D)}}})|\partial D|
%+ \C \sqrt{Area(D)} + 2d_{D}$$ $$\leq 2|\partial D|+\C \sqrt{Area(D)}+2d_D.$$

Passing to the limit as $\epsilon_0\longrightarrow 0$, we will obtain
the assertion of the theorem.

For $n=0$ we have $Area(D') \leq \epsilon$, and so by Lemma \ref{small area} we are done.
Assume the result holds for every integer less than $n$.
By Lemma \ref{boundary} statement C can be reduced to the following statement:

$C'$. For every subdisc $D'' \subset D'$ such that $|\partial D''| \leq 6 \sqrt{Area(D')}$ we have
$$ pdias(D'',D) \leq |\partial D''| + \C \sqrt{Area(D'')} + 2d_{D''}.$$

In particular this implies statement B.
While proving $C'$ we will sometimes make special considerations for the case
$|\partial D''| \leq 2 \sqrt{3} \sqrt{Area(D')}$, which will be needed to prove statement A.

%it is enough to prove that every subdisc $D'' \subset D'$
%with $|\partial D''| \leq 6 \sqrt{Area(D')}$ satisfies the inequality
%$pdias(D'',D) \leq (1+\epsilon_0)|\partial D''|+ \C \sqrt{Area(D')} + 2d_{D''}$.
For any $p,q \in \partial D''$ we will construct a homotopy between the two arcs
satisfying this bound.
%To simplify notation we will denote $D''$ by $D$ from now on.

Let $l_1$ and $l_2$ be two arcs of $\partial D''$ connecting $p$ and $q$.
Let $\overline{D} \subset D''$ by a subdisc satisfying the conclusions of
Proposition \ref{disc subdivision} with $\delta$ equal to our current $\delta$ (defined at the beginning of this section)
divided by $2^{N+2}$.

We have two cases.

\textbf{Case 1.} $\partial \overline{D} \cap \partial D''$ is nonempty.
Then $\partial \overline{D} \setminus \partial D''$ is a collection of arcs $\{ a_i \}$.
For each arc $a_i$ we have a corresponding subdisc $D_i \subset D''\setminus \overline{D}$ with $a_i \subset \partial D_i$
and $Area(D_i) \leq \frac{3}{4} Area(D'') + \frac{\delta}{2^{N-n+2}} < (\frac{4}{3})^{n-1} \epsilon  - \frac{\delta}{2^{N-n+1}}
\leq Area(D')$.

If $l_1 ^i = l_1 \cap \partial D_i$ is
a non-empty arc, we use the inductive assumption to define a path homotopy of
$l_1 ^i$ to $\partial D_i \setminus l_1 ^i$ through curves of length
$$\leq 2 |\partial D_i| + (\frac{\sqrt{3}}{2} \C + 4 \sqrt{3}) \sqrt{Area(D'')} + 2 d_{D''} + O(\delta),$$
$$\leq |\partial D_i| +  \C \sqrt{Area(D'')} + 2 d_{D''} + O(\delta),$$
where we have used Lemma \ref{diameter2} to bound $d_{D_i}$.

This procedure homotopes $l_1$ to a curve $l \subset l_2 \cup \partial\overline{D}$.
Now using the inductive assumption for $\overline{D}$ we continue our
homotopy from $l_1$ to $l_2$ without exceeding
the length bound. (At this stage we will get rid of $\bar D$.) At the end of this
stage it remains only to homotope arcs on $\partial\bar{D}$ to corresponding
arcs of $l_2$ through some of the discs $D_i$. This step is similar to
the already described step involving arcs of $l_1$.

Virtually the same argument proves statement A in this case.

Note that diameter term $d_D$ is not used in an essential way in this case.
Its necessity comes from Case 2.

\textbf{Case 2.} $\partial \overline{D}$ does not intersect $\partial D''$.
Denote $\partial \overline{D}$ by $\gamma$.
$D'' \setminus \gamma$ is the union of an annulus $A$ and an open  disc $\overline{D}$.
Let $\alpha_1$ (resp. $\alpha_2$) be a minimizing geodesic from $p$
(resp. $q$) to $\gamma$. Let $\gamma_i$ denote the arc of
$\gamma$, such that $l_i \cup \alpha_2 \cup -\gamma_i \cup -\alpha_1$
bounds a disc $D_i$ whose interior is in the annulus $A$.
Note that $Area(D_i) \leq \frac{3}{4} Area(D'') + O(\delta)$.

\begin{proposition} \label{square}
A. If $|l_i| \leq 2 \sqrt{Area(D')}+ O(\delta)$ then there is a homotopy from
$l_i$ to $\alpha_1 \cup \gamma_i \cup -\alpha_2$
through curves of length $\leq \D \sqrt{Area(D')} + 2d_{D''}+ O(\delta)$.

B. If $2 \sqrt{Area(D')} < |l_i| \leq 6 \sqrt{Area(D')}+O(\delta)$ then there is a homotopy from
$l_i$ to $\alpha_1 \cup \gamma_i \cup -\alpha_2$
through curves of length $\leq \C \sqrt{Area(D')} + 2d_{D''}+ O(\delta)$.
\end{proposition}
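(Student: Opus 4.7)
The plan is to construct the homotopy inside $D_i$ in the form $\gamma_t=\alpha_1|_{[0,s_1(t)]}\cup\beta_t\cup\alpha_2^{-1}|_{[s_2(t),|\alpha_2|]}$, where $\beta_t$ is a curve in $D_i$ joining a point of $\alpha_1$ to a point of $\alpha_2$, with $\gamma_0=l_i$ (so $s_1(0)=0$, $s_2(0)=|\alpha_2|$, $\beta_0=l_i$) and $\gamma_1=\alpha_1\cup\gamma_i\cup\alpha_2^{-1}$ (so $s_1(1)=|\alpha_1|$, $s_2(1)=0$, $\beta_1=\gamma_i$). Since $|\alpha_1|+|\alpha_2|\leq 2d_{D''}$, this gives $|\gamma_t|\leq 2d_{D''}+|\beta_t|$, so the geodesic arcs $\alpha_1,\alpha_2$ contribute to the length exactly once as ``rails'', and it suffices to construct the $\beta_t$ with $|\beta_t|\leq\D\sqrt{Area(D')}+O(\delta)$ in Case~A and $|\beta_t|\leq\C\sqrt{Area(D')}+O(\delta)$ in Case~B.

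To produce the $\beta_t$, the plan is to apply the inductive hypothesis of Theorem~\ref{main'} to an auxiliary disc whose area is essentially $Area(D_i)\leq\tfrac{3}{4}Area(D'')+O(\delta)<Area(D')$ but whose boundary length depends only on $|l_i|$ and $|\gamma_i|$, not on $|\alpha_1|+|\alpha_2|$. One concrete device is to collapse $\alpha_1$ and $\alpha_2$ to points, for instance by gluing two sliver triangles of vanishing area along them, to obtain a disc $\widetilde{D}_i$ with $\partial\widetilde{D}_i$ essentially $l_i\cup(-\gamma_i)$ of total length $\leq|l_i|+|\gamma_i|+O(\delta)$; this is at most $(2+2\sqrt{3})\sqrt{Area(D')}+O(\delta)$ in Case~A and $(6+2\sqrt{3})\sqrt{Area(D')}+O(\delta)$ in Case~B. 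Applying the inductive hypothesis to $\widetilde{D}_i$ (via Lemma~\ref{boundary} together with Part~B) produces a path homotopy between $l_i$ and $\gamma_i$ with intermediate curves of length $\leq|l_i|+\C\sqrt{Area(\widetilde{D}_i)}+2d_{\widetilde{D}_i}+O(\delta)$. Using $\sqrt{Area(\widetilde{D}_i)}\leq\tfrac{\sqrt{3}}{2}\sqrt{Area(D')}+O(\delta)$ and $d_{\widetilde{D}_i}=O(\sqrt{Area(D')})$, the constants $\D$ and $\C$ then fall out by direct calculation.

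The main obstacle is exactly this separation of the rails $\alpha_1,\alpha_2$ from the bulk of the homotopy. A direct application of $pdias(D_i,D)$ would charge $|\alpha_1|+|\alpha_2|$ through $|\partial D_i|$ and, via Lemma~\ref{diameter2}, also through $d_{D_i}\leq 3d_{D''}+O(\sqrt{Area(D')})$, producing a coefficient of $d_{D''}$ at least $4$, much larger than the $2$ required by the proposition. The auxiliary-disc construction forces the rails out of the inductive estimate and is the crux of the argument; once it is in place, the remaining step is routine bookkeeping with the Besicovitch-type bounds $|\gamma_i|\leq 2\sqrt{3}\sqrt{Area(D')}+O(\delta)$ and $\sqrt{Area(D_i)}\leq\tfrac{\sqrt{3}}{2}\sqrt{Area(D')}+O(\delta)$.
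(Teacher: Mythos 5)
You have correctly isolated the main difficulty: the rails $\alpha_1,\alpha_2$ may have length comparable to $d_{D''}$, so they must enter the length estimate with total coefficient one, while a naive application of the inductive bound to $D_i$ charges them several times over. But the mechanism you propose does not close this gap. First, the claim $d_{\widetilde D_i}=O(\sqrt{Area(D')})$ is unjustified and false in general: gluing the slivers makes $\partial\widetilde D_i$ short, but it does not make the disc shallow. If a thin, deep protrusion hangs off the strip between $\alpha_1$ and $\alpha_2$ (e.g.\ $D''$ shaped like a cross, with $p,q$ at one end of one arm and $\overline D$ at the other end), an interior point can be at distance comparable to $d_{D''}$ from all of $\partial D_i$, hence from $v_1,v_2$ and from $l_i\cup\gamma_i$; then $|\alpha_1|+|\alpha_2|+2d_{\widetilde D_i}$ is of order $3d_{D''}$ or $4d_{D''}$, which is exactly the loss you set out to avoid. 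Second, even granting an estimate inside $\widetilde D_i$, the homotopy supplied by the inductive hypothesis lives in the auxiliary disc, and there is no reason its intermediate curves have the form $\alpha_1|_{[0,s_1(t)]}\cup\beta_t\cup\alpha_2^{-1}|_{[s_2(t),|\alpha_2|]}$ with continuously varying $s_j(t)$: they may enter the slivers and cross $\alpha_1,\alpha_2$ repeatedly, and converting them into curves of $D_i$ of the stated form can cost up to $|\alpha_1|+|\alpha_2|$ per crossing. (There is also the structural point that $\widetilde D_i$ is not a subdisc of $D$, whereas the induction of Section 5 runs over subdiscs of $D$ stratified by area, so the inductive hypothesis does not literally apply to it.)

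The paper resolves the difficulty by a dichotomy on $|\partial D_i|$ rather than by removing the rails. If $|\partial D_i|\leq M=\max\{10\sqrt{3}\sqrt{Area(D')},\,4|l_i|+2\sqrt{3}\sqrt{Area(D')}\}+O(\delta)$, the rails themselves have length $O(\sqrt{Area(D')})$, the inductive bound is applied to $D_i$ directly, and Lemma \ref{diameter2} gives $d_{D_i}\leq d_{D''}+O(\sqrt{Area(D')})$, so the coefficient of $d_{D''}$ remains $2$. If $|\partial D_i|>M$, Lemma \ref{square1} shows, by eliminating every other configuration of endpoints, that a Besicovitch cross-cut of length $\leq\frac{\sqrt{3}}{2}\sqrt{Area(D'')}+O(\delta)$ must join $\alpha_1$ to $\alpha_2$ and cut off a definite fraction of $\partial D_i$; iterating produces a stack of discs $D^k$ with uniformly short boundaries, separated by cross-cuts $\beta^k$. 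The homotopy then sweeps through the stack, each intermediate curve having the form $\alpha_1^k\cup(\cdot)\cup-\alpha_2^k$, and Lemma \ref{diameter}, in the form $d_{D^{k+1}}+|\alpha_j^{k+1}|\leq d_{D''}+|\partial D^{k+1}|$, is precisely what trades the partial rail lengths against the local diameter so that the total stays $\leq 2d_{D''}+O(\sqrt{Area(D')})$. This dichotomy and the stacking lemma are the substance of the proof and are absent from your proposal.
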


To prove Proposition \ref{square} we will need the following lemma (see Fig. 4).

\begin{lemma} \label{square1}
If $|\partial D_i|> M=\max\{10 \sqrt{3} \sqrt{Area(D')}, 4|l_i|+ 2 \sqrt{3} \sqrt{Area(D')} \}+ O(\delta)$,
then there exists a geodesic $\beta$ of length $\leq \frac{\sqrt{3}}{2} \sqrt{Area(D)} +O(\delta)$
 connecting $\alpha_1$ to $\alpha_2$ such that the endpoints of $\beta$ divide $\partial D_i$
into two arcs of length $\leq \frac{3}{4} |\partial D_i|$.
\end{lemma}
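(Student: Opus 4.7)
My plan is to apply Besicovitch's theorem (Theorem~\ref{besicovitch}) to $D_i$ using the natural $4$-arc subdivision of $\partial D_i$ at the four corners $p,q,q',p'$. Labelling the arcs $a_1=l_i$, $a_2=\alpha_2$, $a_3=-\gamma_i$, $a_4=-\alpha_1$, the Besicovitch minimizing geodesic $\mu_{24}$ between the opposite pair $(a_2,a_4)$ is a geodesic from $\alpha_2$ to $\alpha_1$ in $D_i$ --- this is my candidate for $\beta$. The other geodesic $\mu_{13}$ runs from $l_i$ to $\gamma_i$ in $D_i$, and Theorem~\ref{besicovitch} gives
\[
|\beta|\cdot|\mu_{13}|\;\leq\;Area(D_i)\;\leq\;\tfrac{3}{4}Area(D'')+O(\delta).
\]

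To convert this product bound into $|\beta|\leq\tfrac{\sqrt{3}}{2}\sqrt{Area(D)}+O(\delta)$ it suffices to show $|\mu_{13}|\geq\sqrt{Area(D_i)}+O(\delta)$. I would prove this by exploiting the minimality of $\alpha_1,\alpha_2$: since $|\alpha_1|=\operatorname{dist}_D(p,\gamma)$ and $|\alpha_2|=\operatorname{dist}_D(q,\gamma)$, the triangle inequality in $D$ yields $\operatorname{dist}_D(x,y)\geq\max(|\alpha_1|,|\alpha_2|)-|l_i|$ for every $x\in l_i$ and $y\in\gamma$, and hence $|\mu_{13}|\geq\operatorname{dist}_{D_i}(l_i,\gamma_i)\geq\max(|\alpha_1|,|\alpha_2|)-|l_i|$ (using $\operatorname{dist}_{D_i}\geq\operatorname{dist}_D$). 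Writing $|\partial D_i|=|l_i|+|\alpha_1|+|\alpha_2|+|\gamma_i|$ and using $|\gamma_i|\leq 2\sqrt{3}\sqrt{Area(D')}+O(\delta)$, a short calculation shows that both components of $M$ enter: the bound $|\partial D_i|>4|l_i|+2\sqrt{3}\sqrt{Area(D')}$ forces $\max(|\alpha_1|,|\alpha_2|)-|l_i|\geq\tfrac{1}{2}|l_i|$ (useful when $|l_i|$ is not too small compared with $\sqrt{Area(D')}$), while $|\partial D_i|>10\sqrt{3}\sqrt{Area(D')}$ forces $\max(|\alpha_1|,|\alpha_2|)-|l_i|\geq(4\sqrt{3}-3)\sqrt{Area(D')}$ in the small-$|l_i|$ regime. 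In either regime $\max(|\alpha_1|,|\alpha_2|)-|l_i|>\sqrt{Area(D_i)}+O(\delta)$, and the desired length bound on $\beta$ follows.

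For the balance condition, write the endpoints of $\beta$ as $r_1=\alpha_1(t_1)$ and $r_2=\alpha_2(t_2)$. The two arcs into which $\{r_1,r_2\}$ divides $\partial D_i$ have lengths $t_1+|l_i|+t_2$ and $(|\alpha_1|-t_1)+|\gamma_i|+(|\alpha_2|-t_2)$, so balance is equivalent to $t_1+t_2\in[\tfrac{1}{4}|\partial D_i|-|l_i|,\tfrac{3}{4}|\partial D_i|-|l_i|]$. If the Besicovitch minimizer of the corner subdivision already lies in this range we are done; otherwise I would modify the subdivision by replacing the arcs $a_2,a_4$ with narrower sub-arcs $\alpha_2[\tau_2^-,\tau_2^+]$ and $\alpha_1[\tau_1^-,\tau_1^+]$ whose parameter bounds are chosen so that any Besicovitch minimizer between the restricted arcs necessarily has $(t_1,t_2)$ in the balance interval. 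The slack provided by $|\partial D_i|>10\sqrt{3}\sqrt{Area(D')}$ is precisely what allows us to shrink $a_2,a_4$ in this way while keeping the new $|\mu_{13}|$ (via the same triangle-inequality argument applied to the enlarged $a_1,a_3$) large enough to preserve the Besicovitch length bound.

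The main obstacle is the balance condition. The length bound on $\beta$ falls out essentially for free from Besicovitch combined with the triangle inequality and the minimality of $\alpha_1,\alpha_2$, but the naive corner subdivision's minimizer need not have its endpoints in the balance region; the modification of the subdivision to enforce balance while preserving the lower bound on $|\mu_{13}|$ is the delicate step, and it is exactly here that the precise form of $M$ --- with both a constant term in $\sqrt{Area(D')}$ and a linear term in $|l_i|$ --- is used.
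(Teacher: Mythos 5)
Your route is genuinely different from the paper's, and the part you flag as delicate is exactly where it breaks down. The length estimate for $\beta$ via the corner subdivision $\{l_i,\alpha_2,\gamma_i,\alpha_1\}$, the Besicovitch product bound, and the triangle-inequality lower bound $\operatorname{dist}_{D_i}(l_i,\gamma_i)\geq\max(|\alpha_1|,|\alpha_2|)-|l_i|$ is sound (both branches of $M$ do combine to give the needed lower bound, with the worst case near $|l_i|\sim 2\sqrt{3}\sqrt{Area(D')}$). But the balance condition is not established. The minimizing geodesic between $\alpha_1$ and $\alpha_2$ can perfectly well have both endpoints near $p$ and $q$ (e.g.\ when $\alpha_1$ and $\alpha_2$ run close together near $l_i$), in which case one of the two complementary arcs has length close to $|l_i|\ll\frac{1}{4}|\partial D_i|$. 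Your proposed repair --- shrinking $a_2,a_4$ to sub-arcs of $\alpha_2,\alpha_1$ --- is only sketched, and it meets a concrete obstacle: as soon as the enlarged arcs $a_1'$ and $a_3'$ each contain a piece of the \emph{same} minimizing geodesic $\alpha_1$, one has $\operatorname{dist}_{D_i}(a_1',a_3')\leq\tau_1^+-\tau_1^-$ (the length of the removed middle piece), and the new cross pairs (a point of $\alpha_1$ near $p$ against a point of $\alpha_2$ near $\gamma$, say) admit no lower bound from the minimality of $\alpha_1,\alpha_2$ alone. So the Besicovitch product no longer yields the required upper bound on $|\beta|$ without a substantially more careful choice of $\tau_j^{\pm}$ and new distance estimates, none of which are supplied.

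The paper sidesteps the issue entirely: it subdivides $\partial D_i$ into four arcs of \emph{equal} length starting at $p$ (not at the four corners), so that \emph{any} Besicovitch curve joining opposite arcs automatically splits $\partial D_i$ into two arcs each of length between $\frac{1}{4}|\partial D_i|$ and $\frac{3}{4}|\partial D_i|$, and its length is $\leq\frac{\sqrt{3}}{2}\sqrt{Area(D_i)}+O(\delta)$ with no further work. The content of the lemma is then an exhaustive case analysis of where the two endpoints of this curve can lie ($\alpha_k$ to $\alpha_k$, $l_i$ to $l_i$, $\gamma_i$ to $\gamma_i$, $l_i$ to $\gamma_i$, $\gamma_i$ to $\alpha_k$, $l_i$ to $\alpha_k$): each case forces an upper bound on $|\partial D_i|$ that contradicts $|\partial D_i|>M$, which is precisely how both terms in the definition of $M$ arise, leaving $\alpha_1$-to-$\alpha_2$ as the only possibility. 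If you want to salvage your argument, the cleanest fix is to adopt the equal-length subdivision so that balance is free, and convert your triangle-inequality estimates into the exclusion of the unwanted cases.
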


\begin{proof}
We subdivide $\partial D_i$ into four equal subarcs, starting from
point $p$.
By Besicovitch lemma we can connect two opposite arcs by a curve
$\beta$ of length $\leq \frac{\sqrt{3}}{2}\sqrt{Area(D'')} + O(\delta)$.
Now we consider different cases.

Suppose first that $\beta$ connects a point of $\alpha_k$ ($k=1$ or $2$)
with another point of $\alpha_k$. Since $\alpha_k$ is length minimizing
we obtain $\frac{1}{4}|\partial D_i| \leq \frac{\sqrt{3}}{2}\sqrt{Area(D'')} + O(\delta)$
so $|\partial D_i| \leq 2 \sqrt{3} \sqrt{Area(D')} + O(\delta)$.

If $\beta$ connects a point of $l_i$ to another point of $l_i$ then
$|l_i| \geq \frac{1}{4} |\partial D_i|$.
Similiarly, if $\beta$ connects two points of $\gamma_i$ then
$|\partial D_i| \leq 8 \sqrt{3} \sqrt{Area(D'')}+ O(\delta) $.

Suppose $\beta$ connects a point of $l_i$ to a point of $\gamma_i$.
Since $\alpha_1$ and $\alpha_2$ are length minimizing, we must have
$|\alpha_1| + |\alpha_2| \leq |l_i| + 2|\beta|$, so
$|\partial D_i| \leq 2 |l_i| + 2|\beta| + |\gamma_i| \leq 2 |l_i|+
3 \sqrt{3} \sqrt{Area(D')}$.

Suppose $\beta$ connects a point $x$ of $\gamma_i$ and a point $y$ of $\alpha_k$.
Since $\alpha_i$ is a geodesic minimizing distance to the curve $\gamma$,
we conclude that the subarc of $\alpha_i$ between $y$ and $\gamma_i$ %from $p$ to $\gamma_i$
has length $\leq |\beta|$.
Hence, $\frac{1}{4} |\partial D_i| \leq |\gamma_i|+|\beta|$,
so $|\partial D_i| \leq 10 \sqrt{3} \sqrt{Area(D')} + O(\delta)$.

Now, suppose $\beta$ connects a point of $l_i$ and a point of $\alpha_k$.
Then $\frac{1}{4} |\partial D_i| \leq |l_i|+|\beta|$ yielding
$|\partial D_i|\leq  4|l_i| + 2\sqrt{3} \sqrt{Area(D')}$.

If $|l_i| \leq 2\sqrt{3} \sqrt{Area(D')}$, then in all of the above cases we have
$|\partial D_i| \leq 10 \sqrt{3} \sqrt{Area(D')} + O(\delta)$.
If $|l_i| > 2\sqrt{3} \sqrt{Area(D')}$, then $|\partial D_i|
\leq 4|l_i|+ 2 \sqrt{3} \sqrt{Area(D')}$.

The only remaining case is when $\beta$ connects $\alpha_1$ to $\alpha_2$.
\end{proof}

\textit{Proof of Proposition \ref{square}.}
Proof of B.
Suppose first that $|\partial D_i|\leq M$.

Since $Area(D_i) \leq \frac{3}{4} Area(D'') +\frac{\delta}{2^{N-n+2}}$
and using the inductive assumption we can homotope
$l_i$ to $\alpha_1 \cup \gamma_i \cup -\alpha_2$
through curves of length
%$$\leq |\partial D_i|+ + \C \sqrt{Area(D_i)} + 2d_{D_i}$$

$$ \leq (2+\epsilon_0) |\partial D_i|
%+2\lceil \log_{\frac{4}{3}}(\frac{|\partial D_i|-4\sqrt{Area(D_i)}}{2\sqrt{Area(D_i)}})  \rceil \sqrt{Area(D_i)}
+ \C \sqrt{Area(D_i)} + 2d_{D_i} + O(\delta).$$

Note that since $|l_i| \leq 6 \sqrt{Area(D')}+ O(\delta)$,
we have $M \leq (24 + 2 \sqrt{3}) \sqrt{Area(D')}+ O(\delta)$
and $M- |l_i| \leq\max\{10\sqrt{3}\sqrt{Area(D')}, 3|l_i|+2\sqrt{3}\sqrt{Area(D')}\}+O(\delta)\leq (18+ 2 \sqrt{3})\sqrt{Area(D')}+ O(\delta)$.
%
%The logarithmic term is then bounded by
%
%$$2\lceil \log_{\frac{4}{3}}(\frac{|\partial D_i|-4\sqrt{Area(D_i)}}{2\sqrt{Area(D_i)}})  \rceil \sqrt{Area(D_i)} \leq 9 \sqrt{3} \sqrt{Area(D'')}$$

Therefore, using Lemma \ref{diameter2} the lengths of curves in the homotopy are bounded by
$$\leq |\partial D''|+ (18+2 \sqrt{3} + 24+2 \sqrt{3} + \frac{\sqrt{3}}{2} \C +2(18+ 2 \sqrt{3}))\sqrt{Area(D')} + 2d_{D''} + O(\delta)$$
$$< |\partial D''|+ \C \sqrt{Area(D')} + 2 d_{D''}$$

Note that our choice of the constant $\C>(78+8 \sqrt{3})/(1-{\sqrt{3}\over 2})$ is motivated by the last of these inequalities.

Now consider the case, when $|\partial D_i|> M$. Lemma \ref{square1} implies that
 we can subdivide $D_i$
into two subdiscs $D_i ^1$ and $D_i ^2$ of boundary length $\leq \frac{3}{4} |\partial D_i|
+ \frac{\sqrt{3}}{2}\sqrt{Area(D')} + O(\delta)$ by a curve $\beta_1$ connecting $\alpha_1$ and $\alpha_2$.
For each of subdiscs $D_i ^j$ we have an argument completely analogous to that of Lemma \ref{square1}. We apply it repeatedly until we obtain a sequence of discs $D^k$ stacked on top
of each other with $|\partial D^1| \leq M$ and $|\partial D^{k }| \leq (10\sqrt{3}) \sqrt{Area(D')} + O(\delta)$ for $k\geq 2$ (see Fig. 5).
The discs are separated by Besicovitch geodesics $\{ \beta^k \}$. Let $\alpha_1 ^k$ (correspondingly, $\alpha_2 ^k$)
denote the subarcs of $\alpha_1$ (correspondingly, $\alpha_2$) between $p$ (resp. $q$)
and the endpoint of $\beta^k$.

We homotope $l_i$ to $\alpha_1 ^1 \cup \beta^1 \cup -\alpha_2 ^1$ as described above.
Then we homotope $\alpha_1 ^k \cup \beta^k \cup -\alpha_2 ^k$ to
$\alpha_1 ^{k+1} \cup \beta^{k+1} \cup -\alpha_2 ^{k+1}$ using the inductive assumption
in disc $D^{k+1}$ through curves of length 
$$\leq (2+\epsilon_0)|\partial D^{k+1}|
%+2\lceil \log_{\frac{4}{3}}(\frac{|\partial D^{k+1}|-4 \sqrt{Area(D^{k+1})}}
%{2\sqrt{Area(D^{k+1})}})  \rceil \sqrt{Area(D^{k+1})}+
+\C\sqrt{Area(D^{k+1})}+2 d_{D^{k+1}}+|\alpha_1|+|\alpha_2|$$
$$\leq ((40+10\epsilon_0)\sqrt{3}+{\sqrt{3}\over 2}\C) \sqrt{Area(D')}+ 2 d_{D''} + O(\delta)
<\C \sqrt{Area(D')}+ 2 d_{D''} + O(\delta),$$
where we have used Lemma \ref{diameter} to bound $2 d_{D^{k+1}}+|\alpha_1|+|\alpha_2|$.

The proof of A is analogous with the only difference that both $M$ and
$M-|l_i|$ are majorized by $\leq 10 \sqrt{3}\sqrt{Area(D')}$. The only
purpose of A is to obtain a somewhat better value of the constant at
$\sqrt{Area(D)}$ in Theorems 1.3 A and
Theorem 1.6 A. Therefore we omit the details.

This finishes the proof of Propostion \ref{square}.

Using Propostion \ref{square} we homotope $l_1$ to $\alpha_1 \cup \gamma_1 \cup - \alpha_2$.
Using inductive assumption in the disc $\overline{D}$ and Lemma \ref{diameter} we homotop
$\alpha_1 \cup \gamma_1 \cup - \alpha_2$ to $\alpha \cup \gamma_2 \cup - \alpha_2$. By applying
Proposition \ref{square} again we homotope $\alpha \cup \gamma_2 \cup - \alpha_2$ to $l_2$.
This finishes the proof of statements A to C of Theorem \ref{main'}.
The proof of statement D is presented in the last section.

\section{Subdivision by short curves II.}

In this section we are going to prove the following theorem:
%We are going to need the following notation:
%$\delta_D = \sup_{x \in D} dist(x, \partial D)$
%\end{definition}
%
%Note the following properties of $\delta_D$:
%
%1. $d_D - \frac{|\partial D|}{2}\leq \delta_D \leq d_D$.
%
%2. If $D' \subset D$ then $\delta_{D'} \leq \delta_D$.

\begin{theorem} \label{diameter subdivision}
A. Let $M$ be a Riemannian $2$-sphere, $p$ a point in $M$. For every
positive $\epsilon$ there exists a simple loop on $M$ of length
$\leq 2 \max_{x\in M}dist(x,p)+\epsilon$ based at $p$ that divides $M$ into two discs
with areas in the interval $(\frac{1}{3}Area(M)-\epsilon^2, \frac{2}{3} Area(M)+\epsilon^2)$.

B. Let $D$ be a Riemannian 2-disc. For every $\epsilon >0$ there exists a
curve $\beta$ of length $\leq 2 \sup_{x\in D}dist(x,\partial D) + \epsilon$
with endpoints on the boundary $\partial D$, which
does not self-intersect and divides $D$ into subdiscs $D_1$ and $D_2$
satisfying
$$\frac{1}{3} Area(D) - \epsilon^2 \leq Area(D_i) \leq \frac{2}{3} Area(D) + \epsilon^2.$$
\end{theorem}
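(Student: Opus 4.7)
I concentrate on Part B; Part A then follows by the analogous argument in which the base point $p$ and its cut locus $C_p$ play the roles of $\partial D$ and the cut locus of $\partial D$. Set $r = \sup_{x \in D} d(x, \partial D)$. The strategy is to build a continuous 1-parameter family $\{\beta_s\}_{s \in [0,1]}$ of simple arcs in $D$ with endpoints on $\partial D$, each of length at most $2r + \epsilon$, such that the area $A(s)$ of one of the two components of $D \setminus \beta_s$ varies continuously from near $0$ to near $Area(D)$. The Intermediate Value Theorem will then produce an $s^*$ with $A(s^*) \in [\tfrac{1}{3}Area(D) - \epsilon^2, \tfrac{2}{3}Area(D) + \epsilon^2]$, yielding the desired chord $\beta = \beta_{s^*}$.

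To construct the family, I would perturb the metric slightly so that the distance function $h(x) = d(x, \partial D)$ is smooth off a finite 1-complex $C \subset D$ (the cut locus, or medial axis, of $\partial D$) and has generic critical structure. For each point $x$ in the interior of an edge of $C$, there are exactly two minimizing geodesics from $x$ to $\partial D$; their concatenation is a simple arc $\gamma_x$ through $x$, with both endpoints on $\partial D$, of length exactly $2h(x) \leq 2r$. As $x$ traverses an edge of $C$, both $\gamma_x$ and the enclosed area $A(x)$ vary continuously. The cut locus $C$ is connected (its complement in $D$ deformation-retracts onto $\partial D$), so an Eulerian-type traversal of $C$ connects configurations with $A(x) \approx 0$ (when $x$ lies near $\partial D$ at the tip of a branch, where $\gamma_x$ is a very short chord) to configurations with $A(x) \approx Area(D)$ (approaching $\partial D$ from the other side).

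The main technical obstacle is interpolating $\gamma_x$ continuously across the vertices of $C$, where several minimizing geodesics meet and $\gamma_x$ can jump discontinuously between distinct pairs of geodesics as $x$ crosses a vertex. This is precisely where the modification of Gromov's filling technique enters: within a small disc of radius $O(\epsilon)$ around each vertex, I would construct an explicit short homotopy between the candidate bi-geodesic chords, using piecewise-geodesic arcs that take brief detours through the small ball, and verify that all intermediate arcs remain simple, lie in $D$, and have length at most $2r + \epsilon$. The argument is reminiscent of the interpolation scheme in the Balacheff--Sabourau proof of Papasoglu's theorem. A combinatorial analysis of the area function $A$ along the graph $C$ (tracking how $A$ changes when crossing an edge, and showing that consecutive extremes differ by small amounts relative to $Area(D)$) then guarantees that $A$ attains every value in $[0, Area(D)]$ along the resulting sweep-out, so in particular attains values in the target interval. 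For Part A, the same strategy applies with loops at $p$ formed from pairs of minimizing geodesics meeting at points of $C_p$; the length bound $2\max_{x \in M} d(x, p) + \epsilon$ arises as twice the maximal cut distance from $p$.
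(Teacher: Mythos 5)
Your strategy is genuinely different from the paper's, which never constructs a sweep-out at all: the paper argues by contradiction, triangulating $S^2$ very finely, coning to $D^3$, sending the cone vertex to $p$, edges $[vv_i]$ to minimizing geodesics, filling each triangular loop by its small-area side (which by the contradiction hypothesis has area $\le \frac13 Area(M)-\epsilon^2$), and then filling each $3$-simplex because its boundary sphere has image of area $<Area(M)$ and hence degree zero; this extends the degree-one map $f$ over $D^3$, a contradiction. Part B is handled by the analogous impossible retraction of $D$ onto $\partial D$. No continuity of any family of arcs is ever needed.

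The gap in your argument is the step you describe as a local interpolation ``within a small disc of radius $O(\epsilon)$ around each vertex.'' At a vertex $v$ of the cut locus where minimizing geodesics $g_1,g_2,g_3$ meet, the two chords you must connect (say $g_1\cup g_2$ and $g_2\cup g_3$) agree along $g_2$ but differ along the \emph{entire} lengths of $g_1$ and $g_3$, which are macroscopic arcs (of length up to $r$) ending at possibly far-apart points of $\partial D$. The interpolation is therefore not a local problem near $v$: you must sweep the whole region between $g_1$ and $g_3$, whose area can be a definite fraction of $Area(D)$, by simple arcs of length $\le 2r+\epsilon$ --- which is a problem of exactly the same nature and difficulty as the theorem itself. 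Correspondingly, your claim that ``consecutive extremes differ by small amounts relative to $Area(M)$'' is asserted but not proved, and it is exactly where the argument can fail: the area function along an edge-traversal of $C$ can jump at a vertex by the area of the omitted sector (e.g.\ by $0.9\,Area(D)$ if one sector is large), so the intermediate value theorem does not apply across the jump, and the vertex chords themselves need not have a side with area in $[\frac13 Area(D),\frac23 Area(D)]$. Worse, the global object you are trying to build --- a continuous family of arcs of length $\le 2\,diam(D)+\epsilon$ with endpoints on $\partial D$ whose enclosed area runs from $0$ to $Area(D)$ --- would, after closing each arc up with a boundary arc, yield a sweep-out of $D$ by closed curves of length $\le |\partial D|+2\,diam(D)+\epsilon$; the Frankel--Katz-type examples discussed in this paper (and in [L]) are designed precisely so that no such short sweep-out exists. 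So the existence statement of the theorem cannot be upgraded to the continuous family your IVT argument requires, and the approach needs to be replaced, not just patched.
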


\begin{proof}
\par\noindent
A.
Fix a diffeomorphism $f:S^2\longrightarrow M$.
Consider a very fine triangulation of $S^2$.
We are assuming that the length of the image of each $1$-simplex of
this triangulation under $f$ does not exceed $\epsilon$, and the area of
the image of each $2$-simplex does not exceed $\epsilon^2$.
Extend this triangulation
to a triangulation of $D^3$ constructed as the cone of the chosen triangulation
of $S^2$ with one extra vertex $v$ at the center.
We are going to prove the assertion by contradiction.
Assume that all simple loops of length $\leq 2d+\epsilon$
based at $p$ divide $M$ into two subdiscs one of which has area $\leq {1\over 3}Area(D)-\epsilon^2$.
We are going to construct a continuous extension of $f$ to $D^3$ obtaining
the desired contradiction. We are going to map the center $v$ of $D$ into
$p$. We are going to map each $1$-simplex $[vv_i]$ of the considered
triangulation of $D^3$ to a shortest geodesic connecting $p$ with $f(v_i)$.
We extend $f$ to all $2$-simplices $[vv_iv_j]$ by contracting the triangular loop
formed by the shortest geodesics connecting $p$, $f(v_i)$ and $f(v_j)$ within
one of two discs in $M$ bounded by this loop that has a smaller area.
This disc has area $\leq {1\over 3}Area(D)-\epsilon^2$.
Now it remains to construct the extension of $f$ to the interiors
of all $3$-simplices $[vv_iv_jv_k]$ of the chosen triangulation of $D^3$.
Note that the area of the image of the boundary of such a $3$-simplex  
does not exceed $3({1\over 3}Area(M)-\epsilon^2)+\epsilon^2<Area(M)$.
Therefore the restriction of the already constructed extension of $f$ to
this boundary has degree zero, and, therefore, is contractible. This completes
our extension process and yields the desired contradiction.
\par\noindent
B. We can deduce B from the proof of A by collapsing $\partial D$ into a point
$p$ and repeating the argument used to prove part A for the resulting
(singular) $2$-sphere. Yet one can give another direct proof by contradiction
as follows. Assume that the assertion of the theorem is false. Consider
a very fine geodesic triangulation of the disc. Assume that the areas
of all triangles are less than $\epsilon^2$.
We are going to construct a retraction
$f$ of $D$ onto $\partial D$, thereby obtaining a contradiction as follows:
First we are going to map all new vertices of the triangulation. Each vertex
will be mapped to (one of) the closest points on $\partial D$. Each
edge $v_iv_j$ will be mapped to one of two arcs in $\partial D$ connecting
$f(v_i)$ with $f(v_j)$. We have two possible choices. We choose the arc that
together with the geodesic broken line $f(v_i)v_iv_jf(v_j)$ encloses
a subdisc $D_{ij}$ of $D$ of a smaller area (which is $\leq {1\over 3} Area(D)-\epsilon^2$). Now we need to extend the constructed map to
all triangles $v_iv_jv_k$ of the triangulation. We claim that the three
arcs between $f(v_i), f(v_j)$ and $f(v_k)$ do not cover $\partial D$,
and, therefore, $f(\partial v_iv_jv_k)$ can be contracted within $\partial D$
yielding the desired contradiction. Indeed, in the opposite case the discs $D_{ij}$,
$D_{ik}$ and $D_{jk}$ would cover all $D$ with a possible exception
of a part of the triangle $v_jv_jv_k$ (see Fig. 9).

\begin{figure}[center]
\includegraphics[scale=0.4]{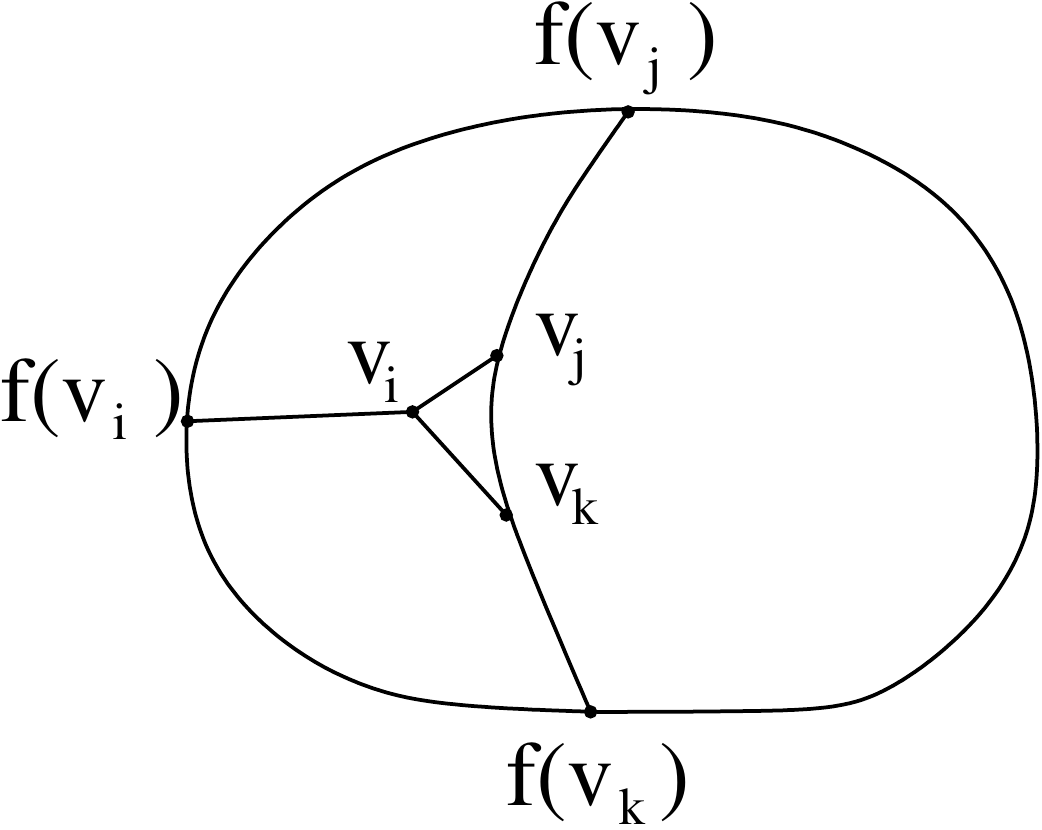}
\caption{}
\label{lnr9}
\end{figure}
%\realfig{Fig. 9}{lnr9.eps}{}{0.4\hsize}

But this is impossible as
the sum of their areas does not exceed $Area(D)-3\epsilon^2$ which is strictly
less than $Area(D)-\epsilon^2$.

{\bf Remark.} One can ask for which values of $a\in (0, {1\over 2}]$ 
there exists a constant $c(a)$ with the following property: For each Riemannian
sphere $M$ of diameter $d$ there exists a simple
closed curve of length $\leq  c(a) diam(M)$ dividing
$M$ into two discs of area $\geq a\ Area(M)$.
Theorem 6.1 implies the positive answer for this question
for all $a<{1\over 3}$ with $c(a)=2$. However,
one of the authors
recently proved that the answer for this question is negative if $a={1\over 2}$
([L2]). Moreover, the main result of [L2] asserts that there is no
$C$ such that for every Riemannian $2$-sphere $M$ and $\epsilon>0$ there
exists a finite collection of simple closed curves on $M$
of length $\leq C\ diam(M)$ dividing $M$ into two domains of area
$\geq {1\over 2}\ Area (M)-\epsilon$. On the other hand, Theorem 6.1
easily implies the following result:

\begin{theorem} For each $a\in (0,{1\over 2})$ there exists a constant $C(a)$
such that for every Riemannian $2$-sphere $M$ 
there exists a simple closed curve of length $\leq C(a)diam(M)$
dividing $M$ into two discs of area $\geq a Area(M)$.
\end{theorem}

\begin{proof}
First, use Theorem 6.1 A to divide $M$ into
two discs $D_1, D_2$ of area $\geq Area(M)/3-\epsilon^2$ by a simple closed curve
of length $\leq 2 diam(M)+\epsilon$.  
Apply Theorem 6.1 B to $D_1$ and $D_2$ obtaining discs $D_{ij}, i,j\in\{1,2\}$.
Keep applying Theorem 6.1 B to each of the new discs obtained on the previous
step providing that its area is $\geq ({1\over 2}-a)Area(M)$. We are going
to stop only after the areas of all the
discs will become $<({1\over 2}-a)Area(M)$.
As each application of Theorem 6.1 decreases the area of the disc at least by a factor
of ${2\over 3}-o(1)$, the total number of ``cuts" will be bounded by
a constant $F(a)$ depending only on $a$ (that, obviously, behaves as
$2^{\log_{{2\over 3}-o(1)}({1\over 2}-a)}$, as $\epsilon\longrightarrow 0$). As $\sup_{x\in U}dist(x,\partial U)\leq diam(M)$ for each
subdomain $U\subset M$, the total length of all ``cuts" does not
exceed $F(a)(2diam(M)+\epsilon)$. So, we obtain a cell subdivision
of $M$ into $N\leq F(a)+1$ discs of area $<({1\over 2}-a)Area(M)$. It is
well-known that every cell subdivision of a $2$-sphere into a finite
collection of cells is shellable, that is, we can enumerate these $N$ discs
by integers $1,\ldots, N$ so that for every positive integer $k<N$ the union
of the first $k$ discs and the complement of this union are both homeomorphic to a disc.
It is clear that starting
from the first disc and attaching the discs one by one we will find a value
of $k<N$ such that the union of the first $k$ discs has an area
between $a Area(M)$ and ${1\over 2} Area(M)$.
\end{proof}

%\QED
%As $\delta_{D_1}\leq diam(M)$, we divide
%$D_1$ into two discs $D_{11}, D_{12}$ by a simple curve of length $\leq 2diam(M)$ that starts
%and ends at $\partial D_1$. The ares of $D_11$, $D_{12}$ satisfy the inequalities $${4\over 9}Area(M)+o(1)\geq {2\over 3}Area(D_1)+o(1)\geq Area(D_11)\geq Area(D_{12})\geq {1\over 3}Area(D_1)+o(1)\geq {2\over 9}Area(M)+o(1).$$ 
%Note
%that our notations imply that,
%$Area(D_{11}\geq {1\over 2} Area(D_1)\geq {1\over 3}Area(M)+o(1).$
%Therefore, $D_2\bicup D_{11}$ has area  

\section{Proofs of Theorem 1.2, 1.3 and 1.6 D}

\begin{definition} For each disc $D$ define $\delta_D$ by the formula
$\delta_D = \sup_{x \in D} dist(x, \partial D).$
\end{definition}

Note the following properties of $\delta_D$:

1. $d_D - \frac{|\partial D|}{2}\leq \delta_D \leq d_D$.

2. If $D' \subset D$ then $\delta_{D'} \leq \delta_D$.

%\begin{theorem} \label{diameter subdivision}
%Let $D$ be a Riemannian 2-disc. For every $\epsilon >0$ there exists a
%curve $\beta$ of length $\leq 2 \delta_D + \epsilon$
%with endpoints on the boundary $\partial D$, which
%does not self-intersect and divides $D$ into subdiscs $D_1$ and $D_2$
%satisfying
%$$\frac{1}{3} Area(D) - \epsilon^2 \leq Area(D_i) \leq \frac{2}{3} Area(D) + \epsilon^2$$
%\end{theorem}
%
%\begin{proof}
%\end{proof}

\begin{lemma} \label{log induction}

For each $n\geq 1$
$pdias (D) \leq 2|\partial D| + 2 d_D + 8 n \delta_D + \C \sqrt{(\frac{2}{3})^n Area(D)}.$%+O(\epsilon)$

\end{lemma}

\begin{proof}
The proof is by induction on $n$. If $n=0$,
then Theorem \ref{main}  implies that
$pdias(D) \leq 2|\partial D| + 2 d_D + \C \sqrt{Area(D)}$

Suppose the claim is true for $n-1$. Choose $\epsilon>0$ that can later be
made arbitrarily small.
We use Theorem \ref{diameter subdivision} to subdivide
$D$ into two subdiscs of area $\leq \frac{2}{3} Area(D) + \epsilon^2$ by a curve $\beta$ of length
$\leq 2 \delta_D + \epsilon^2$.

%Note that for each of $D_i$ we have $\delta_{D_i}=\delta_{D}$
%(This is a crucial property of $\delta_D$ that makes this induction argument possible).
The inductive assumption implies that
we can homotope an arc of $l_1$ (from the definition of $pdias$) over each of
$D_i$ via curves of length less than or equal to
$ 2|\partial D|+ 2 |\beta| + 2 d_{D_i} + 8 (n-1) \delta_{D_i} + \C \sqrt{(\frac{2}{3})^{n-1} Area(D_i)}+O(\epsilon).$

We have $d_{D_i} \leq d_D + |\beta|$ by Lemma 4.2. Hence
the lengths of the curves are bounded by
$ 2 |\partial D|+ 8 n \delta_D + 2 d_D + \C \sqrt{(\frac{2}{3})^n Area(D)} + O(\epsilon).$
\end{proof}

The next proposition allows us to somewhat improve our estimates by getting
 rid of an extra $|\partial D|$.
A simple informal idea behind its proof 
is that a search for a path homotopy between two 
complementary arcs in the boundary of $D$ can be reduced to a search for 
a family of path homotopies between pairs of minimizing geodesics (of the
same length) connecting certain pairs of points of $\partial D$. To be more
precise we will be dealing with triangles, where one of sides is very short,
and two other sides are minimizing geodesics; we will be looking for
path homotopies between a pair of sides of the triangle and the remaining side.

\begin{proposition} \label{boundary-diameter}
Suppose that $f(x,y,z)$ is a continuous function such that
$pdias(D) \leq f(|\partial D|, diam(D), Area(D))$
for every disc $D$.  Then
$$pdias(D) \leq \max_{0 \leq t \leq |\partial D|} |\partial D| - t + f(\min \{2(|\partial D| - t),
2t, 2 diam(D) \}, diam(D), Area(D))$$
\end{proposition}

\begin{proof}
Let $p,q$ be the endpoints of $l_1 \cup -l_2 = \partial D$ and $\beta$
be a minimizing geodesic from $p$ to $q$.
We will construct a homotopy from $l_1$ to $\beta$.
We choose a small $\epsilon>0$ and partition $[0,1]$ by $N+1$ points $\{0=a_0,...,a_N=1 \}$
so that $|l_1({[a_i,a_{i+1}]})| \leq \epsilon$. Let $\alpha_i$ denote a minimizing geodesic from
$p$ to $l_1(a_i)$, where $\alpha_{N+1}=\beta$.
Inductively we homotop $\alpha_i \cup l_1([a_i,1])$
to $\alpha_{i+1} \cup l_1([a_{i+1},1])$. More precisely, we are looking
for a path homotopy between $\alpha_i\bigcup l([a_i,a_{i+1}])$ and 
$\alpha_{i+1}$ while keeping $l([a_{i+1}, 1])$ unchanged (see Fig. 10).

\begin{figure}[center]
\includegraphics[scale=0.3]{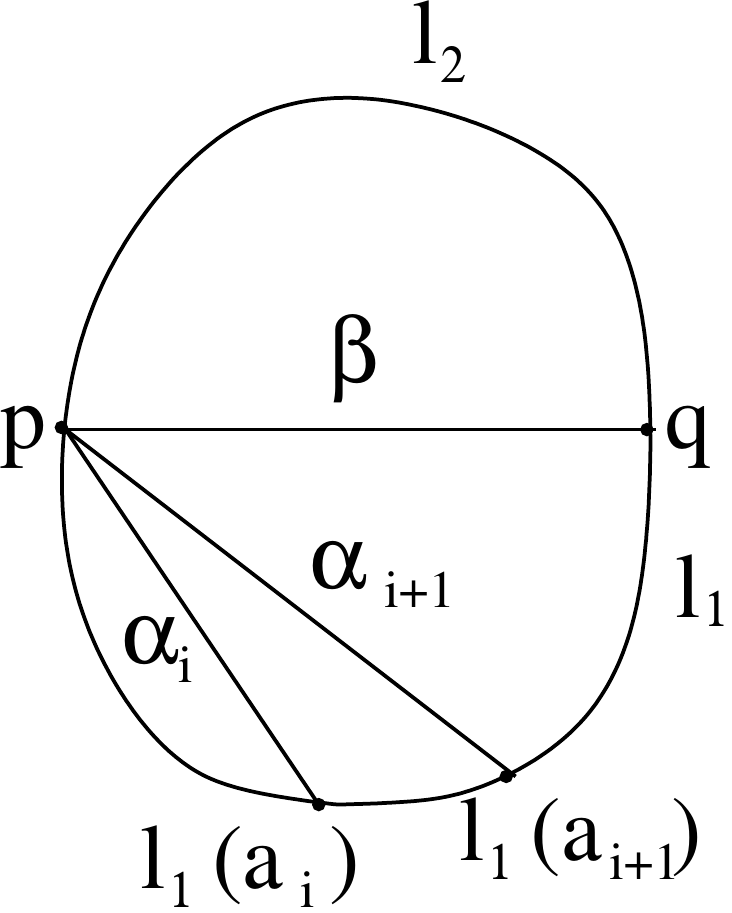}
\caption{}
\label{lnr10}
\end{figure}
%\realfig{Fig. 10}{lnr10.eps}{}{0.3\hsize}

Consider the subdisc bounded by $\partial D^i = \alpha_i \cup l_1([a_i,a_{i+1}]) \cup - \alpha_{i+1}$.
Since $\alpha_i$ are length minimizing we have $|\partial D^i| \leq \min \{2(|\partial D| - t)+\epsilon,
2t+\epsilon, 2 diam(D) \} +\epsilon$, where $t=|l_1([0,a_i])|$. Using our assumption we obtain a homotopy
from $\alpha_i \cup l_1([a_i,1])$ to $\alpha_{i+1} \cup l_1([a_{i+1},1])$.
The homotopy between $\beta$ and $l_2$ can be constructed in the same way.
It remains to pass to the limit as $\epsilon\longrightarrow 0$.
\end{proof}

In particular, we can now prove statement D of Theorem \ref{main'}.
From statements B, C we know that

$$ pdias(D) \leq |\partial D|+
2\max\{0,\lceil\log_{\frac{4}{3}}(\frac{|\partial D|-4\sqrt{Area(D)}}{2\sqrt{Area(D)}})  \rceil\} \sqrt{Area(D)}
+ \C \sqrt{Area(D)} + 2d_{D}.$$

Then if we set $L_t = \min \{2(|\partial D| - t),2t, 2 diam(D) \}$ we obtain an estimate

$$pdias(D) < \max_t (|\partial D| - t + \frac{L_t}{2}) + \frac{L_t}{2} +
2\max\{0,\lceil \log_{\frac{4}{3}}(\frac{L_t-4\sqrt{Area(D)}}{2\sqrt{Area(D)}})  \rceil\} \sqrt{Area(D)} $$

$$+ \C \sqrt{Area(D)} + 2 diam(D)$$

$$\leq |\partial D| + 2\max\{0,\lceil \log_{\frac{4}{3}}(\frac{diam(D)}{\sqrt{Area(D)}}-2\rceil)\} \sqrt{Area(D)}
+ \C \sqrt{Area(D)} + 3 diam(D),$$
as $L_t\leq 2diam (D)$ and $-t+{L_t\over 2}\leq 0$.
The formula for excess follows from this estimate.

An analogous coarser estimate that uses Theorem \ref{main} instead of Theorem \ref{main'} C yields

$$pdias(D) \leq |\partial D| + 5 diam (D) + \C \sqrt{Area(D)}.$$

\textit{Proof of Theorem \ref{main2}}

Using Lemma 7.2 %\label{boundary-diameter}
we obtain

$pdias (D) \leq |\partial D| + (5 + 8 n) diam D + \C \sqrt{(\frac{2}{3})^n Area(D)}+O(\epsilon)$

Let $k$ be any positive number, such that  $n=2 \log _{3/2} (\frac{\sqrt{Area(D)}}{ diam(D) k})$
is a natural number. Then the previous estimate can be written as

$$pdias(D) \leq |\partial D| + (\C k + 5 + 16 \log_{3/2} (\frac{1}{k}) +16 \log_{3/2}(\frac{\sqrt{Area(D)}}{diam(D)})) diam(D) $$

Suppose first that $\sqrt{Area(D)} > (\frac{2}{3})^{6.5} diam(D)$.
Note that for some $k \in [(\frac{2}{3})^{7} ,(\frac{2}{3})^{6.5}]$ we will have
$2 \log _{3/2} (\frac{\sqrt{Area(D)}}{diam(D) k}) \in \mathbb{N}$.
It is easy to check that for each $k$ in this interval $\C k + 16 \log_{3/2} (\frac{1}{k}) < 154$.
Hence, from the previous inequality and using $\frac{16}{\ln(3/2)}<40$ we obtain

$$pdias(D) \leq |\partial D| + 159 diam (D) + 40 \ln(\frac{\sqrt{Area(D)}}{diam(D)}) diam(D). $$

If $\sqrt{Area(D)} \leq (\frac{2}{3})^{6.5} diam(D)$, then

$$pdias(D) \leq |\partial D| + 5 diam (D) + \C \sqrt{Area(D)} \leq |\partial D| + 50 diam (D).$$

\textbf{Remark.}
We can obtain a better asymptotic estimate if instead of a bound with $2 |\partial D|$  we use
the one from Theorem \ref{main'} with the logarithmic term. Then for $\frac{\sqrt{Area(D)}}{diam(D)} \rightarrow \infty$
we obtain $pdias(D) < |\partial D| + ({12\over\ln{3\over 2}}+o(1))\ln(\frac{\sqrt{Area(D)}}{diam(D)}) diam(D)$. (Note that ${12\over\ln{3\over 2}}=29.5956\ldots$).

Note that the 25 percent improvement of the constant at $diam(D)\ln{\sqrt{Area(D)}\over diam(D)}$ (from ${16\over\ln{3\over 2}}$ to ${12\over\ln{3\over2}}$) comes from the fact
that the term $2|\beta|$ in the proof of Lemma 7.2 can be replaced by $|\beta|$,and $8n\delta_D$ in the right hand side in the inequality of Lemma 7.2 becomes
$6n\delta_D$.

\textit{Proof of Theorem \ref{sphere}.}

Let $p$ be an arbitrary point of $M$. Take the metric ball $B_\epsilon(p)$
of a very small radius $\epsilon$ centered at $p$ and choose a point 
$q\in\partial B_\epsilon(p)$. Applying Theorem 1.6 A we see that
one can contract $\partial B_\epsilon(p)$ in $M\setminus B_\epsilon(p)$
as a loop based at $q$ via loops of length not exceeding the right hand side
in Theorem 1.3 A plus $O(\epsilon)$. Now we can attach two copies of the
geodesic segment $(pq)$
connecting $p$ and $q$ at the beginning and the end of each of those loops
based at $q$. As the result, we will obtain a family of loops based
at $p$. Finally, add a family of loops based at $p$ that constitutes a homotopy
between the constant loop $p$ and $(pq)*\partial B_\epsilon(p)*(qp)$
and a family of loops that contracts $(pq)*(qp)$ over itself to the constant
loop $p$. The lengths of all these new loops are $O(\epsilon)$. As the result,
we obtain a family of loops based at $p$ of
lengths $\leq 664\sqrt{Area(M)}+2 diam(M) +O(\epsilon)$
that sweeps-out $M$. Now pass to the limit as $\epsilon\longrightarrow 0$.

To prove inequality B we can proceed as above with the only difference that
$\partial B_\epsilon(p)$ will be contracted in $M\setminus B_\epsilon(p)$
using Theorem 1.2 instead of Theorem 1.6 A.
%Let us prove the first assertion of the theorem.
%Let $M$ be a Riemannian 2-sphere. By Theorem \ref{sphere subdivision} there exists a closed curve $\gamma$
%of length $\leq 2 \sqrt{3} \sqrt{Area(M)}$ dividing $M$ into two subdiscs $D_i$ of area $\leq \frac{3}{4} Area(M)$.
%Letting $p=q$ be any point on $\gamma$ and applying Theorem \ref{main'} A to each of the subdiscs, we obtain
%a sweep-out of $M$ by curves based at $p$ of length
%$$\leq (2\sqrt{3} + \C \frac{\sqrt{3}}{2}) \sqrt{Area(M)} + 2 (diam(M) + \sqrt{3} \sqrt{Area(M)})$$
%$$\leq \K \sqrt{Area(M)} + 2 diam(M),$$
%where we have used triangle inequality to bound the diameter of the disc by the
%sum of the half of the length of its
%boundary and $diam(M)$.
%
%To obtain the remaining 
%inequalities we use Theorem 6.1 A to divide the sphere into two discs
%with areas $\leq {2\over 3}Area (M)+\epsilon^2$ by a simple closed curve
%
%of length $\leq 2d+\epsilon$. Then we apply either Theorem 1.1 or Theorem 1.2
%to both discs and pass to the limit as $\epsilon\longrightarrow\infty$. As
%the result we obtain inequalities B and C.
Finally, note that, when ${\sqrt{Area(M)}\over diam(M)}\longrightarrow\infty$,
one can improve the constant in inequality B exactly as it had been
described in the remark after the proof of Theorem 1.2 above.
The result will be the last assertion in Theorem 1.2.
\end{proof}

\section{Near-optimality of upper bounds for $exc(d,A)$ and $dias(M)$. (Proof of Theorems AA and BB.)}

In this section we will use a modification of the examples from \cite{FK} to prove Theorems AA and BB.
The idea of these examples is to embed a large tree in a disc $D$ 
and to define a Riemannian metric on $D$ so that the distance between 
non-adjacent edges of the tree is comparable to the diameter of the disc,
and the area of the disc is proportional to the number of edges. One observes then
that every homotopy $(\gamma_t)_{t\in [0,1]}$ contracting the boundary $\gamma_0$ of the disc to a point
will contain a closed curve $\gamma_{t_*}$ that intersects at least $n$ edges of the tree, where $n$ is proportional
to the height of the tree.

To obtain a better bound we will somewhat modify the construction in \cite{FK}.
We will use ternary trees instead of binary trees and define a metric on $D$ which is flat 
away from the tree. Similar construction appeared in a work of Riley and Thurston
\cite{RT}.

Define a ternary tree of height $n$ inductively as follows.
Ternary tree of height $0$ is a point. A ternary tree of height $n$ is obtained by 
taking a ternary tree of height $n-1$ and attaching three new edges to each vertex that 
has less than three adjacent edges. 
A ternary tree of height $n$ has $\frac{3}{2}(3^n - 1)$ edges. We regard a ternary tree as a metric $1$-complex
endowed with a metric, such that the length of each edge of the tree is equal to $1$.

Let $N=\frac{3}{2}(3^n - 1)$ and consider a $2N-$gon $Q \subset \mathbb{R}^2$. Connect each vertex of $Q$ to
a point $a$ inside $Q$ by a line segment. On each of $2N$ triangles obtained in this way
we define a metric that turns it into a flat equilateral triangle. 
We obtain a flat metric on $Q$ with a singularity at $a$ (the total angle around 
$a$ is $\frac{2 N \pi }{3}$). We will now glue pairs of edges of $Q$
so that they will form a ternary tree $T$.
More precisely, consider a closed tour of the tree $T$ so that each edge
is traversed twice, and regard this tour as a quotient map $q:S^1 = \partial Q \rightarrow T$.
Consider the quotient space obtained from the disjoint union of $Q$ and $T$ by identifying each point $z\in\partial Q$ with $q(z)$.
The resulting quotient space will be a Riemannian $2$-sphere which is singular at a point $a$
and along $T$. We remove a very small metric disc $D_0$ around $a$ with diameter, area and the length of boundary 
$< \epsilon$ for some very small positive $\epsilon$. As a result we obtain a Riemannian $2$-disc $D$ with a boundary of length $<\epsilon$. 

Let $c$ denote the length of the side of the equilateral triangle
used in the construction. 
If two points $x,y \subset T \subset D$ are at a distance $\geq 1$ in $T$ 
(with intrinsic tree metric)
then the distance between them in $D$ is at least ${\sqrt{3}\over 2}c$.
Also, as $N \rightarrow \infty$ and $\epsilon \rightarrow 0$
the diameter $d$ of $D$ approaches $2c$ and the area is asymptotic to 
$\frac{\sqrt{3}}{8}N d^2$.
%Furthermore, it has the following properties:
%
%\begin{enumerate}
%\item
%Area of $D$ satisfies $||D|- \frac{3 \sqrt{3}}{4} (3^n-1) d^2| < \epsilon$.
%
%\item 
%If two point $a,b \subset T \subset D$ are at a distance $\geq 1$ in $T$ 
%(with intrinsic tree metric)
%then the distance between them in $D$ is at least $d/2 - \epsilon$.
%
%\end{enumerate}

We claim that for every homotopy $(\gamma_t)_{t\in [0,1]}$ contracting the boundary of $D$
to a point there will be a closed curve $\gamma_{t_*}$ of length $\geq {\sqrt{3}\over 2} n c $.
As a corollary, it would immediately follow that $exc$ function satisfies

$$exc(d,A)\geq \frac{1}{ \ln(3)} d \ln (\frac{\sqrt{A}}{d}) (1- o(1)),$$

when ${\sqrt{A}\over d}\longrightarrow\infty$.

To prove this we claim will use Theorem 0.6 from \cite{CR} asserting that
if there exists a homotopy $\gamma_t$
from a boundary of a Riemannian $2$-disc $D$ to a point that passes through curves of length $\leq L$, 
then for each $\epsilon>0$ there exists a diffeomorphism 
$g:D \rightarrow D_{st}$ from $D$ to the standard Euclidean disc, 
such that the pre-image of each concentric circle has length $\leq L + \epsilon$.
In other words, we can always replace an arbitrary contraction by a monotone contraction
increasing the length of the longest curve by at most $\epsilon$ for an arbitrarily small positive $\epsilon$.
The following lemma then implies that at least one of the curves in a monotone contraction 
must be long.

\begin{lemma} \label{ternary_tree}
Let $T$ be a ternary tree of height $n$ and $f:T \rightarrow \mathbb{R}$ a continuous
function. Then for some point $x \in \mathbb{R}$ the level set $f^{-1}(x)$
contains $n$ points $x_1,\ldots ,x_n$ such that the distance between each pair $x_i$, $x_j$ of these
points in the tree metric is at least $1$.
\end{lemma}

\begin{proof}
The proof is by induction on the height of $T$.
The claim is trivially true for a ternary tree of height $1$.
Assume it to be true for a tree of height $n-1$.

Let $A = \max \{ f(T) \}$ and $B = \min \{ f(T) \}$.
Consider a geodesic path $p \subset T$
between a point $a' \in f^{-1} (A)$ and a point $b'  \in f^{-1} (B)$.
We claim that there exists a ternary subtree $T' \subset T$ of height
$n-1$, such that $dist(T',p) \geq 1$. 
Indeed, let $T_i$ denote three ternary subtrees
that can be obtained from $T$ by removing the root and three
edges adjacent to it. Now we observe that every geodesic path in $T$ intersects
at most two of the three trees $T_i$.

By the inductive assumption there exists a point $u\in [A,B]$  such that $f^{-1}(u) \cap T'$
contains $n-1$ points $x_1,\dots , x_{n-1}$ such that all pairwise distances between these points are not less than $1$.
Definition of $p$  immediately implies that $f^{-1}(u) \cap p$ is non-empty. Now we can choose any point of
$f^{-1}(u)\cap p$ as $x_n$.
\end{proof}

Lemma \ref{ternary_tree} holds for binary trees, but with $n$ replaced by $n/2$
(see \cite{FK}). This leads to a somewhat worse bound than if we were using a ternary tree.

Thus, we have established the first part of Theorem AA. In order to prove the second part of Theorem AA we are going
to modify the above construction of the metric on the $2$-disc as follows.  First, choose the length of the boundary of the $2$-disc
$\epsilon$ so that $\delta=d\epsilon$ is much smaller than $A$ (i.e. $\delta =A o(1)$, as ${\sqrt{A}\over d}\longrightarrow 0$.)
Now take $n=5$ (and, therefore, $N=363$), and $c=\sqrt{{2(A-\delta)\over 363\sqrt{3}}}.$ Now attach to the boundary of the disc the cylinder
of length $d-c$. In other words, we scale previously obtained metrics to a small (but not negligibly small) size to ensure that
the area is slightly less than $A$ and attach a very long but infinitesimally narrow ``straw" of a negligibly small area to
ensure that the diameter is arbitrarily close to $d$. Now note that the proof of the quoted above Theorem 0.6 from [CR]
implies that if there exists a homotopy contracting the boundary of the disc to a point  via {\it based} loops of length $<L$,
then the boundary can be contracted to a point via a family of simple loops of length $<L$ based at the same point that pairwise intersect
only at the base point. Now Lemma 8.1 implies that if we would try to contract the boundary of the constructed $2$-disc
to a point as a loop based at one of points of its boundary, then the homotopy obtained as an application of this version
of Theorem 0.6 of [CR] will contain a simple curve intersecting the tree at $n=5 $ points. Therefore, the length of this loop will
be at least $(2d-2c)+4(\sqrt{3}/2)c=d+(2\sqrt{3}-2)c=2d+0.08257\ldots\sqrt{A}$.

In order to prove the first part of Theorem BB we ``cap" the metrics on $2$-discs used to prove the first part of Theorem AA by flat round $2$-discs of radius
${\epsilon\over 2\pi}$ attached to the boundary. Now we use the main result of [CL] that asserts that given a sweep-out of a $2$-sphere by
closed curves of length $<L$ for some $L$ there is a slicing of the $2$-sphere by pairwise non-intersecting simple curves of length $<L$.
More specifically, one can find a smooth map of the $2$-sphere into $[0,1]$ such that inverse images of $0$ and $1$ are points, and
the inverse image of each $t\in (0,1)$ is a simple closed curve of length $<L$.
We use this result instead of the result from [CR] used in the proof of Theorem AA. (Note, however, the quoted theorem from [CR] is a part
of the proof of this theorem from [CL].) In the situation of the first part of Theorem BB we 
can conclude that one of these simple closed curves must intersect the tree at $n$ edges, and therefore $dias\geq L\geq {\sqrt{3}\over 2}cn$,
which immediately implies the theorem.
\par
In order to prove the second part of Theorem BB we start similarly to the proof
of the second part of Theorem AA. Namely, we take the metric on the $2$-disc
constructed at the beginning of the first part of Theorem AA, choose
$n=5$, and then choose $c$ so that the area of the resulting metric disc would be somewhat less than $Area(M)/3$. (This is somewhat different from the proof
of the second part of Theorem AA, where we wanted the result to have area somewhat less
than $A$.)  The length of the boundary is negligibly small. Then we attach to the boundary
the cylinder of length $diam(M)/2-c$,
and obtain a metric on the $2$-disc of
diameter very close to $d/2$ and area slightly less than $Area(M)/3$.
Then we take three copies of this metric $2$-disc and ``wield" their thin
ends together. 
Near the point where these three thin ends merge
the resulting Riemannian
$2$-sphere will look like a tubular neighborhood of three rays emanating from
the same origin in a plane and forming angles $2\pi/3$ with each other. The
area of the resulting Riemannian $2$-sphere will be equal to $Area(M)$, and
its diameter can be made arbitrarily close to $diam(M)$. Most of the area
of this $2$-sphere will be concentrated in three congruent ``thick"
metric discs glued at the ends of thin cylindrical ``tubes" that are far
from the thin central area. Each of these three ``thick" discs would
contain a copy of the metric tree of height $n=5$.
\par
Now we going to use again the main result of [CL]. For an arbitrarily small $\epsilon$ one can map the $2$-sphere into $[0,1]$ so that the pre-images of 
$0$ and $1$ are points, and for each $t\in (0,1)$ its inverse image is
a simple curve of length $\leq dias(M)+\epsilon$. Note that for each of the
three trees of height $n$ there exists $t_i\in (0,1)$, $i=1,2,3$, such that its inverse
image $\gamma_i$ intersects the tree at $5$ points such that the pairwise
distances between them are greater than or equal to ${\sqrt{3}\over 2}c$.
Therefore the length of an arc of $\gamma_i$ that starts and ends
at two of these points and contains the other three is at least $2\sqrt{3}c$.
Assume that $\gamma_i$ would pass through the central area $C$ that we imagine
as being arbitrarily small yet such that its complement splits into three
connected components. In this case the length of $\gamma$ would be at least
$d+(2\sqrt{3}-2)c-\delta$, where $\delta$ could be made arbitrarily small.
Substituting $c={\sqrt{2}\over \sqrt{1089\sqrt{3}}}\sqrt{Area(M)}$%=0.04767\ldots\sqrt{Area(M)}$
we would obtain the
the desired inequality.

So, it remains to prove that at least one of the curves $\gamma_i$ passes
through the central area. Recall that they all are pairwise isotopic through
simple pairwise non-intersecting curves that cannot enter the ``outer" discs
bounded by $\gamma_i$ (not containing the central area). But 
curves $\gamma_i$ will not be even pairwise homotopic in the complement
of the union of these three discs. This completes the proof of the second
part of Theorem BB.

\section{Curvature-free upper bounds for lengths of geodesics on Riemannian
$2$-spheres.}

Let $M$ be a Riemannian $2$-sphere of diameter $d$. It had been proven in
[NR1] that for each $x\in M$ and each $k\geq 1$ there exist $k$ 
distinct non-trivial geodesic loops on $M$ based at $x$ of length $\leq 20kd$. Moreover, one of the authors recently
proved that for each pair of points $x, y\in M$ and each $k\geq 1$ there
exist $k$  distinct geodesics connecting $x$ and $y$ of length $\leq 22(k-1)d+dist(x,y)$ ([NR3]).
It is well-known (and had been noticed in [NR1]) that the classical
A. Schwartz's proof ([Sch]) of J.P. Serre's theorem asserting the existence of an infinite
set of distinct geodesics connecting an arbitrary pair of points $x,y$
of a closed
Riemannian manifold implies that the lengths of $k$ of these geodesics
can be bounded by $2(k-1)Bdias_p(M)+dist(x,y)$ (in the notations of the present paper). The length of a shortest non-trivial geodesic loop based at $p$
does not exceed $Bdias_p(M)$. Combining these known
facts with Theorem 1.3 A we obtain the following result:

\begin{theorem}
Let $M$ be a Riemannian $2$-sphere of diameter
$d$ and area $A$, $x,y$ a pair
of points on $M$ and $k$ a positive integer. Then there exist at least
$k$ distinct geodesics of length $\leq 4(k-1)d + 1328(k-1)\sqrt{A}+dist(x,y)$ connecting $x$ and $y$. In particular, there exist $k$ distinct
non-trivial geodesic loops of length $\leq 4kd+1328k\sqrt{A}$ based
at $x$. The length of the shortest geodesic loop based at any prescribed
point of $M$ does not exceed $2d+664\sqrt{A}$.
\end{theorem}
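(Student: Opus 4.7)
The plan is to combine Theorem 1.3 A of the present paper with the classical Schwartz--Serre argument cited in the paragraph preceding the theorem. The strategy requires essentially no new geometry beyond what has already been developed in the paper.

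First, I would fix any base point $p \in M$; the natural choice is $p = x$. Theorem 1.3 A supplies
\[
Bdias_p(M) \leq Bdias(M) \leq 664 \sqrt{A} + 2d,
\]
which is the only quantitative input that uses the area. Next, I would invoke the Schwartz--Serre bound recalled just before Theorem 8.1: given a sweep-out of $M$ by loops based at $p$ of length at most $L$, one can produce $k$ pairwise non-homotopic paths from $x$ to $y$ of length at most $2(k-1)L + dist(x,y)$ by concatenating a minimizing $xy$-geodesic with successive ``strata'' of the sweep-out at $p$; a standard minimax/curve-shortening argument in each component of $\Omega_{x,y} M$ then promotes these paths to genuine geodesics without increasing length. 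Taking $L = Bdias_p(M) + \varepsilon$ and letting $\varepsilon \downarrow 0$, the substitution of the bound above yields the first assertion
\[
4(k-1)d + 1328(k-1)\sqrt{A} + dist(x,y).
\]

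The second assertion, concerning $k$ non-trivial geodesic loops based at $x$, follows by specializing $y = x$ in the previous step to produce $k+1$ distinct geodesics from $x$ to itself, discarding the trivial constant loop, and using $dist(x,x) = 0$; this gives the bound $2k \cdot Bdias_x(M) \leq 4kd + 1328k\sqrt{A}$. The final assertion about the shortest non-trivial geodesic loop corresponds to the case $k=1$: any optimal sweep-out of $M$ by loops based at $p$ produces, via curve-shortening, a non-trivial geodesic loop at $p$ of length $\leq Bdias_p(M) \leq 2d + 664\sqrt{A}$.

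Since the Schwartz--Serre mechanism (in particular the passage from short curves to genuine geodesics in distinct homotopy classes) is classical and is explicitly recorded in [NR1], [Sch], [R], the only substantive step in the whole argument is Theorem 1.3 A itself, which is proved earlier in this paper. Consequently there is no genuine obstacle in the proof of Theorem 8.1; its entire content is a repackaging of Theorem 1.3 A through the standard Schwartz--Serre bookkeeping.
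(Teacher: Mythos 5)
Your proposal is correct and follows exactly the paper's route: the paper obtains Theorem 8.1 precisely by substituting the bound $Bdias(M)\leq 664\sqrt{A}+2d$ of Theorem 1.3 A into the quoted Schwartz--Serre estimate $2(k-1)Bdias_p(M)+dist(x,y)$, specializing $y=x$ for the loops, and using the fact that the shortest nontrivial geodesic loop at $p$ has length $\leq Bdias_p(M)$. (One small inaccuracy in your description of the classical mechanism: since $S^2$ is simply connected, the $k$ paths cannot be pairwise non-homotopic and $\Omega_{x,y}M$ is connected --- the distinctness comes from a minimax over higher-dimensional families in the path space --- but this does not affect your argument, which, like the paper, uses the classical quantitative bound as a black box.)
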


When ${\sqrt{A}\over d}$ is very small, this theorem provides
a substantial improvement of the quoted upper bounds from [NR1] and [NR3].
Note, that the length of the shortest geodesic loop based at a pole of a very long and thin ellipsoid of revolution is equal to $2d$. Moreover, it seems
plausible that the approach of [BCK] can be used to construct
Riemannian manifolds arbitrarily close to thin ellipsoids of revolution,
such that the length of the shortest geodesic loop based at a pole is strictly greater than $2d$. Therefore, our upper bound for the length of a shortest
geodesic loop seems to be quite good (when
${\sqrt{A}\over d}\longrightarrow 0$).
\par
Similarly, it had been proven in [NR2] and [LNR2] that there exist three distinct non-trivial simple periodic geodesics on $M$ of length $\leq 5d$, $\leq 10d$ and
$\leq 20d$, correspondingly.
On the other hand our proof of Theorem 1.3 yields a {\it homeomorphism}
between the round sphere $S^2\subset {\bf R}^3$ and $M$
sending the South pole of $S^2$ to any
prescribed point $p\in M$ and all circles in $S^2$ passing through the
South pole and obtained as a section of $S^2$ by a plane parallel to $Y$-axis
to simple closed curves of length $\leq 664\sqrt{Area(M)}+2diam(M)$
(that intersect only at $p$).
In [LNR2] we used this estimate and the proof of the classical Lyusternik-Schnirelman theorem asserting the existence of three simple non-trivial periodic geodesics on each Riemannian $2$-sphere
to demonstrate that the lengths of these three geodesics do not exceed, correspondingly,
$700\sqrt{Area(M)}+diam(M)$,
$1400\sqrt{Area(M)}+2diam(M)$, and $2800\sqrt{Area(M)}+4diam(M)$
(see Theorem 1.3 in [LNR2]). It is interesting that the first two of these estimates are seemingly optimal
%when ${\sqrt{Area(M)}\over d}\longrightarrow 0$
up to a constant factor at $\sqrt{Area(M)}$,
when ${\sqrt{Area(M)}\over d}\longrightarrow 0$
(see the discussion after the text of Theorem 1.3 in [LNR2]).
\par
Our paper [LNR2] contains another application of the results of the present paper to simple periodic geodesics on Riemannian $2$-spheres which is relevant
for all Riemannian $2$-spheres and not only for ``thin" ones.
The classical proofs
of the Lyusternik-Shnirelman theorem use the non-trivial $1$- , $2$- and $3$-dimensional homology classes of the space of simple non-parametrized curves with coefficients
in $\mathbb{Z}_2$. The results of our present paper can be used to estimate the widths of these three homology classes and, therefore, the lengths of
three simple periodic geodesics that appear in standard proofs of the Lyusternik-Shnirelman theorem. Indeed, Theorem 1.2 of [LNR2] asserts that the lengths of these
three simple periodic geodesics (that all have non-zero indices) can be majorized by $800diam(M)\max\{1,\ln {\sqrt{Area(M)}\over diam(M)}\}$. However,
the quoted above upper bounds $5d$, $10d$ and $20d$ from [LNR2] for the lengths of some three simple periodic geodesics on $M$ are not necessarily  upper bounds for the lengths of these
three simple periodic geodesics with positive indices but can be estimates
for lengths of some simple periodic geodesics of index zero. In fact, we
beleive that one can use examples of Riemannian metrics on $S^2$ similar to those used in [FK], [L], [L2] and the previous section to demonstrate that
the smallest length of a simple periodic geodesics of a positive index cannot be majorized in terms of $diam(M)$ alone.

\medskip\noindent
{\bf Acknowledments.} This work was partially supported by NSERC Discovery
Grants of two of its authors (A.N. and R.R.).

\end{document}